\let\over\@@over\makeatother
\numberwithin{equation}{section}
\theoremstyle{plain}
\newtheorem{theorem}{Theorem}[section]
\newtheorem{proposition}[theorem]{Proposition}
\newtheorem{lemma}[theorem]{Lemma}
\theoremstyle{remark}
\newtheorem{remark}[theorem]{Remark}
\newenvironment{proof1}%
{\begin{trivlist} \item[]{{\em Proof} }}%
{\hspace*{\fill}$\Box$\end{trivlist}}
\newcommand{\be}{\begin{equation}}
\newcommand{\ee}{\end{equation}}
\newcommand{\bse}{\begin{subequations}}
\newcommand{\ese}{\end{subequations}}
\newcommand{\LV}{\left|}
\newcommand{\RV}{\right|}
\newcommand{\LN}{\left\|}
\newcommand{\RN}{\right\|}
\newcommand{\LB}{\left[}
\newcommand{\RB}{\right]}
\newcommand{\LC}{\left(}
\newcommand{\RC}{\right)}
\newcommand{\LCB}{\left\{}
\newcommand{\RCB}{\right\}}
\newcommand{\ub}{\overline{u}}
\newcommand{\ux}{\overline{u_x}}
\newcommand{\uxs}{\overline{u_x^2}}
\newcommand{\R}{\mathbb{R}}
\newcommand{\Rr}{\mathcal{R}}
\newcommand{\placeholder}{\,\cdot\,}
\newcommand{\Q}{\mathcal{Q}}
\newcommand{\Pa}{\mathcal{P}}
\newcommand{\M}{\mathcal{M}}
\begin{document}

\title[Instability of peakons in the Novikov equation]{$W^{1,\infty}$ instability of $H^1$-stable peakons \\in the Novikov equation}
\date{\today}

\author[R.M. Chen]{Robin Ming Chen}
\address{Department of Mathematics, University of Pittsburgh, Pittsburgh, PA 15260}
\email{mingchen@pitt.edu}

\author[D.E. Pelinovsky]{Dmitry E. Pelinovsky}
\address{Department of Mathematics and Statistics, McMaster University, Hamilton, ON L8S 4K1, Canada}
\email{dmpeli@math.mcmaster.ca}

\begin{abstract}
It is known from the previous works that the peakon solutions of the Novikov equation are orbitally and asymptotically stable in $H^1$.
We prove, via the method of characteristics, that these peakon solutions are unstable under $W^{1,\infty}$-perturbations.
Moreover, we show that small initial $W^{1,\infty}$-perturbations of the Novikov peakons can lead to the finite time blow-up of
the corresponding solutions.
\end{abstract}

% CMP, JMPA, CommPDE, Indiana University, SIMA, others

\maketitle

\section{Introduction}

The integrable Novikov equation
\be\label{novikov}
 u_t-u_{xxt}+4u^2u_x=3uu_xu_{xx}+u^2u_{xxx}
\ee
is proposed by Novikov \cite{Novikov09} from a Lie symmetry analysis of nonlocal partial differential equations.
Reformulating \eqref{novikov} in terms of the momentum density $m = u - u_{xx}$ yields the following evolution form
\be\label{novikov m}
m_t+u^2m_x+\frac 32 (u^2)_x m = 0.
\ee
Hence, this Novikov equation can be regarded as a cubic nonlinear generalization of the Camassa--Holm (CH) equation \cite{CH93}
(derived earlier in \cite{FF81}):
\be\label{ch m}
m_t + u m_x + 2 u_x m = 0.
\ee

The Novikov equation shares many common analytical properties with the CH equation.
It belongs to the class of completely integrable equations thanks to the existence of
the Lax pair \cite{HW08,Novikov09} and the bi-Hamiltonian structure \cite{HW08}.
The Novikov equation can exhibit the phenomenon of wave-breaking \cite{BUNovikov}
(see also recent work in \cite{Chen16JFA}). Another remarkable feature of the Novikov equation
 is the existence of peaked traveling wave solutions (called \emph{peakons}):
\be\label{peakon}
u(t,x) = \varphi_c(x - ct - x_0), \quad c > 0, \;\; x_0 \in \R
\ee
with
\be
\varphi_c(x) = \sqrt{c} e^{-|x|}, \quad x \in \R,
\ee
with corner singularities at the peaks \cite{GX09,HLS09,HW08}. In what follows, we will be dealing with the peakons
propagating with the unit speed, for which we denote $\varphi := \varphi_{c = 1}$.

\subsection{Previous works}

The (local) well-posedness theory for strong solutions to the Novikov equation \eqref{novikov}
is a well-studied subject \cite{HH12,NZ11,T11,WG16,WY12}. However, these results are not applicable 
to the scopes of our work since we have to consider weak solutions due to the wave breaking occurrence and the presence of peakons.

The Novikov equation \eqref{novikov} can be rewritten in the convolution form
\be\label{novikov weak}
u_t + u^2 u_x + (1 - \partial_x^2)^{-1} \partial_x \LC {3\over2} uu_x^2 + u^3 \RC + (1 - \partial_x^2)^{-1} \LC {1\over2} u_x^3 \RC = 0,
\ee
which suggests $H^1 \cap W^{1,3}$ as a natural space for weak solutions. It turns out that,
by incorporating one of the conservation laws
\be\label{cons law 1}
E(u) := \int_\R \LC u^2  + u_x^2 \RC \,dx,
\ee
the existence and uniqueness of global weak solutions can be established in
$H^1 \cap W^{1,\infty}$ under an additional constraint on the initial datum
$u_0$ that $m_0 := u_0 - u_{0xx}$ is a positive Radon measure \cite{WG16,WY11}.
The sign condition $m_0 \ge 0$ was replaced by $u_0 \ge 0$ in \cite{L13} and a weak solution in $H^1 \cap W^{1,4}$
with the one-sided $L^\infty$ bound on the gradient of $u$ is obtained through a viscous approximation,
at the price of losing the conservation of $E$ and hence the uniqueness of solutions.

If another conservation law
\be\label{cons law 2}
F(u) := \int_\R \LC u^4 + 2u^2u_x^2 - {1\over3} u_x^4 \RC \,dx
\ee
is taken into account, the global weak solution theory can be casted in $H^1 \cap W^{1,4}$
without any restrictions on the initial datum \cite{Chen18IUMJ}.
The data-to-solution map is shown to be Lipschitz continuous on bounded sets of $H^1 \cap W^{1,4}$
under an optimal transport metric \cite{Chen18ARMA}.

The importance of the two conservation laws $E(u)$ and $F(u)$ is also manifested
in the stability analysis of the peakons. In \cite{NovikovStab}, a Lyapunov function was constructed from the two conserved
quantities, through which an $H^1$-orbital stability of peakons was established.  Among various assumptions
on the initial perturbation $u_0 \in H^s$ with $s \geq 3$, a crucial one in \cite{NovikovStab}
was positivity of $m_0 := u_0 - u_{0xx}$.
Such a sign property is preserved in the time evolution of the Novikov equation, from which one can
control $|u_x(t,x)| \le |u(t,x)| \leq E(u_0)$,
leading to a global solution in $H^s$, $s \geq 3$.
The same sign condition is a key to the construction of the Lyapunov function for peakons in \cite{NovikovStab}.

Applying this orbital stability and utilizing the finite speed propagation property,
an $H^1$-asymptotic stability of the Novikov peakon was obtained in \cite{ChenAsympt}
for the initial datum $u_0 \in H^1$ with $m_0$ being a nonnegative Radon measure.

The sign condition on $m_0$, and hence the boundedness of $|u_x(t,x)|$, presents a serious obstacle
in the analysis of $W^{1,\infty}$-instability of peakons and might even exclude this kind of
instability. Therefore, for our work we need an $H^1$ orbital stability result
for the initial datum without the sign condition on $m_0$. In a recent work \cite{Chen19},
such a sign constraint was removed, at the price that the global strong solutions in \cite{NovikovStab}
were replaced by the local strong solutions. The following theorem records the corresponding
result from \cite{Chen19}.\\

{\bf Theorem A}($H^1$-orbital stability)
{\it For every $0 < \varepsilon \ll 1$ and for every $u_0\in H^s(\R)$ with $s > 5/2$ satisfying
\begin{equation*}
\|u_0 - \varphi\|_{H^1} < \varepsilon^4,
\end{equation*}
the corresponding solution $u \in C([0,T),H^s)$ to the Novikov equation \eqref{novikov} with initial datum $u_0$
and the maximal existence time $T > 0$ satisfies
\begin{equation*}
\sup_{t\in [0, T)} \|u(t,\placeholder) - \varphi(\placeholder - \xi(t))\|_{H^1} < 2\LC 4 + \|u_{0x}\|_{L^\infty}^{1/2} \RC \varepsilon %< (8 + A) \delta,
\end{equation*}
where $\xi(t)$ is a point of maximum of $u(t, \placeholder)$. }

Theorem A only considers smooth solutions, whereas for our instability argument we need to control
the evolution of solutions that are only Lipschitz. For this purpose, we need to reexamine the
$H^1$ stability in a weaker regularity framework, which we do in Theorem \ref{thm stab weak}.

\subsection{Main results and methodology}

The purpose of the current work is to understand the stability of peakons in the Novikov equation
under the $W^{1,\infty}$ perturbations which preserve the original smoothness of peakons. In particular,
we will consider piecewise $C^1$ perturbations to a single peakon and study their evolution
under both the linearized and nonlinear flows associated to the Novikov equation \eqref{novikov}.
As is formulated in the following two theorems, we will prove that piecewise $C^1$ perturbations to
a single peakon may grow in the $W^{1,\infty}$ norm in spite of being bounded in the $H^1$ norm
both in the linearized and nonlinear flows.

First we derive in Section \ref{subsec_deriv lin} the Cauchy problem for the {\it linearized} evolution of a perturbation $v(t,x)$ to the peakon $\varphi(x)$
in the form
\be\label{lin sim v}
\begin{cases}
v_t + (\varphi^2 - 1) v_x + \varphi_x \LB v(t,0) - \varphi v \RB = 0, \\
v|_{t = 0} = v_0,
\end{cases}
\ee
which, following the idea of \cite{DmitryPreprint}, motivates us to work in the space $C^1_0 \subset W^{1,\infty}$ defined as
\be\label{space C}
C^1_0 := \LCB v \in C(\R) \cap C^1(\R^+) \cap C^1(\R^-): \quad v, v_x \in L^\infty \RCB.
\ee
Hence $v_0 \in C^1_0$ may have at most one peak at $x = 0$, which is also a location of the peak
of $\varphi$. The method of characteristics can thus be implemented to provide an explicit solution to \eqref{lin sim v} in $H^1 \cap C^1_0$, allowing one to obtain the following result.

\begin{theorem}[Linear instability]\label{thm lin stab}
For any given initial datum $v_0 \in H^1 \cap C^1_0$, there exists a unique global solution
$v \in C(\R, H^1 \cap C^1_0)$ to the linearized problem \eqref{lin sim v} such that
\be\label{lin H^1 cons}
\|v(t, \cdot)\|^2_{H^1(\R^\pm)} = \|v_0\|^2_{H^1(\R^\pm)} \qquad \textnormal{(linear $H^1$ stability)}
\ee
and
\be\label{lin W growth}
\|v_x(t, \cdot)\|_{L^\infty(\R^+)} \ge |v_0(0) + v_{0x}(0^+)| e^t - |v_0(0)| \quad \textnormal{(linear $W^{1,\infty}$ instability)}
\ee
for all $t > 0$.
\end{theorem}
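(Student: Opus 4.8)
The plan is to integrate \eqref{lin sim v} explicitly by the method of characteristics, using the fact that $\varphi(0)=1$ turns the origin into a fixed point. I would first introduce the characteristic curves $X(t;x_0)$ solving $\dot X = \varphi(X)^2-1$, $X(0)=x_0$. Since $\varphi^2-1$ is negative off the origin and vanishes at $x=0$, no characteristic crosses the origin: the ones launched in $\R^+$ decrease toward $0$, those in $\R^-$ run off to $-\infty$, and $X\equiv 0$ when $x_0=0$. Hence the two half-lines are invariant and a datum in $C^1_0$ stays in $C^1_0$, with its single admissible corner pinned at $x=0$. Evaluating \eqref{lin sim v} at $x=0$, both the transport coefficient $\varphi^2-1$ and the bracket $v(t,0)-\varphi v$ vanish there, so $v_t(t,0)=0$ and the peak value is frozen: $v(t,0)=v_0(0)=:a$ for all $t$.

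With the origin value frozen, the equation collapses on $\R^+$ (where $\varphi=e^{-x}$, $\varphi_x=-\varphi$) to the scalar linear ODE $\tfrac{d}{dt}v=\varphi a-\varphi^2 v$ along each characteristic, whose coefficient is explicit because the characteristic ODE is itself integrable. Solving these linear ODEs along the characteristics and patching across $x=0$ produces a solution for all $t\in\R$; linearity gives uniqueness, and the $L^\infty$ and $H^1$ bounds below certify that it remains in $H^1\cap C^1_0$, yielding the global well-posedness claim.

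For the conservation \eqref{lin H^1 cons} I would differentiate $\|v(t)\|_{H^1(\R^+)}^2=\int_0^\infty(v^2+v_x^2)\,dx$ in time, insert $v_t$ and $v_{tx}$ from the equation and its $x$-derivative, and integrate by parts. The endpoint contributions at $+\infty$ vanish because $v,v_x\in H^1(\R^+)$, while every endpoint contribution at $x=0$ is killed either by the factor $\varphi^2-1=0$ or by the frozen value $v(t,0)=a$; the remaining integrals then cancel in pairs to give $\tfrac{d}{dt}\|v(t)\|_{H^1(\R^+)}^2=0$, and the mirror computation on $\R^-$ finishes \eqref{lin H^1 cons}. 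I expect this accounting of the boundary terms at the corner to be the most delicate point of the argument.

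For the instability \eqref{lin W growth} I would track $P:=v_x$ along the degenerate characteristic parked at $x=0^+$. Differentiating the $\R^+$ equation in $x$ and forming the material derivative cancels the second-order term and leaves $\tfrac{d}{dt}P=\varphi^2 P-\varphi a+2\varphi^2 v$ along characteristics. On the characteristic frozen at the origin one has $\varphi\equiv 1$ and $v\equiv a$, so this reduces to $\tfrac{d}{dt}P=P+a$ with $P(0)=v_{0x}(0^+)$, giving $v_x(t,0^+)=\LC v_{0x}(0^+)+v_0(0)\RC e^t-v_0(0)$. Taking absolute values, applying the reverse triangle inequality, and bounding $\|v_x(t,\cdot)\|_{L^\infty(\R^+)}\ge|v_x(t,0^+)|$ then yields \eqref{lin W growth} exactly. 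The same computation on $\R^-$ instead produces $\tfrac{d}{dt}P=-P+a$, whose solution stays bounded, which explains why the growth is confined to $\R^+$.
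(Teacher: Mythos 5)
Your proposal is correct and follows essentially the same route as the paper: characteristics $\dot q=\varphi^2(q)-1$ with the origin as an invariant point, the frozen peak value $v(t,0)=v_0(0)$, the reduced ODE $\tfrac{d}{dt}P=P+v_0(0)$ at $x=0^+$ giving $v_x(t,0^+)=\bigl(v_{0x}(0^+)+v_0(0)\bigr)e^t-v_0(0)$ (the paper's limit \eqref{grow 0+}), and the same integration-by-parts energy identity on each half-line for \eqref{lin H^1 cons}, with the boundary terms cancelling exactly as you describe. The only cosmetic difference is that the paper solves $q$, $V$, $W$ in closed form and extracts the growth from the explicit formula for $W$, whereas you solve the limiting constant-coefficient ODE along the degenerate characteristic directly — a step that matches the paper's own treatment and is easily justified by continuous dependence on the parameter $s$.
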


The nonlinear analysis is more delicate. The Cauchy problem for the Novikov equation can be formulated as
\be\label{Cauchy nov}
\begin{cases}
u_t + u^2u_x + \Q[u] = 0, \qquad t>0\\
u(0,x) = u_0(x),
\end{cases}
\ee
where
\be\label{def Q}
\Q[u] := {1\over2}\varphi_x \ast \LC {3\over2} u u_x^2 + u^3 \RC + {1\over4}\varphi \ast u_x^3.
\ee
Similarly as in the linear analysis, we would like to first establish a well-posedness theory of the evolution of the perturbation $v$ in $H^1 \cap C^1_0$. Compared with the Camassa--Holm case \cite{DmitryPreprint}, the Cauchy problem for $v$ in $H^1$ given by \eqref{nonlinear v}
was not studied before, hence we cannot use the previous well-posedness results. By a careful retooling of the method
of characteristics, the Cauchy problem for $v$ can be transformed to a dynamical system \eqref{nonlinear-F} where the vector field on the right-hand side consists of local terms of polynomial type and nonlocal terms that can be shown to be locally Lipschitz. Hence standard ODE theory applies to imply local well-posedness if solutions for $v$ in $H^1 \cap C^1_0$ established in Theorem \ref{theorem GWP}.

The $H^1$ orbital stability result (Theorem A) suggests that in order for the peakons to be $W^{1,\infty}$-unstable, it is necessary to track the dynamics of the gradient $v_x$ of the perturbation and look to show that $\|v_x\|_{L^\infty}$ exhibit substantial growth. However Theorem A only treats strong solutions, and therefore a similar result in the weak solution framework is needed and is established in Theorem \ref{thm stab weak}.

The key ingredient in proving the $H^1$ orbital stability is to construct a Lyapunov function using
the two conservation laws $E$ and $F$ similar to what is done in \cite{Chen19}. For strong solutions,
the conservation laws can be easily checked by utilizing the bi-Hamiltonian structure of the equation.
However for weak solutions this becomes more delicate. Our strategy is based on regularizing the system
and commuting the regularization with nonlinearity. The conservation laws can then be realized by deriving
crucial commutator estimates in order to show that the remainder terms converge to zero as the regularization
parameter tends to zero as is done in Lemma \ref{lem cons weak}.

It turns out that the dynamics of $v_x$ simplifies when restricted at the peak location, see equation \eqref{W-0-dynamics}.
The corresponding differential equation consists of a Ricatti-like term, the terms that involve interaction with $v$, and a nonlocal term. The orbital stability ensures that all the interaction terms are small. Another important consequence of the orbital stability is that the nonlocal term is also small. This way a Ricatti-type inequality can be obtained, which in turn leads to a finite time blow-up.
\begin{theorem}[Nonlinear instability]\label{thm nonlin stab}
For every $\delta > 0$, there exist $t_0 > 0$ and $u_0 \in H^1 \cap C^1_0$
satisfying
\begin{equation}
\label{initial-bound-theorem}
\| u_0 - \varphi \|_{H^1} + \| u_{0x} - \varphi_x \|_{L^{\infty}} < \delta,
\end{equation}
such that the unique solution $u \in C([0,T),H^1 \cap W^{1,\infty})$
to the Cauchy problem (\ref{Cauchy nov}) with the initial datum $u_0$ and
the maximal existence time $T > t_0$ satisfies
$u(t,\cdot + a(t)) \in C^1_0$ for $t \in [0,T)$ and
\begin{equation}
\label{final-bound-theorem}
\| u_x(t_0,\cdot) - \varphi_x(\cdot - a(t_0)) \|_{L^{\infty}} >1,
\end{equation}
where $a(t)$ is a point of peak of $u(t,\cdot)$ for $t \in [0,T)$ such that $a(0) = 0$. Moreover, there exist initial datum $u_0$ satisfying \eqref{initial-bound-theorem} such that $T<\infty$ for the corresponding solution $u \in C([0,T),H^1 \cap W^{1,\infty})$.
\end{theorem}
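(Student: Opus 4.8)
The plan is to engineer an initial datum that is a peakon plus a carefully chosen piecewise-$C^1$ perturbation $v_0 \in H^1 \cap C^1_0$, small in both the $H^1$ and $W^{1,\infty}$ norms, but for which the sign and size of the one-sided derivative $v_{0x}(0^+)$ (equivalently, the right derivative of $u_{0x}$ at the peak) is arranged so that the gradient at the peak grows rapidly. The linear theorem (Theorem \ref{thm lin stab}) is the heuristic guide: there $\|v_x(t,\cdot)\|_{L^\infty(\R^+)} \gtrsim |v_0(0)+v_{0x}(0^+)| e^t$, so by taking $v_0(0)+v_{0x}(0^+)$ to be a fixed nonzero constant (independent of $\delta$) while making $\|v_0\|_{H^1}$ and $\|v_{0x}\|_{L^\infty}$ both smaller than $\delta$, one already sees exponential departure of $u_x$ from $\varphi_x$ at the peak in the \emph{linearized} flow. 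The nonlinear statement asserts that this mechanism survives the full flow up to some time $t_0$ at which the excess exceeds $1$.

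The main workhorse is the reduced dynamics of the gradient at the moving peak, equation \eqref{W-0-dynamics} advertised in the introduction. First I would set $W(t) := u_x(t, a(t)^+)$ (or the appropriate one-sided quantity measuring the deviation of the slope from the peakon's) and derive its evolution along the characteristic through the peak. The introduction announces that this equation takes a Riccati-like form, schematically $\dot W = \alpha W^2 + (\text{interaction with } v) + (\text{nonlocal } \Q\text{-term})$, where the quadratic coefficient $\alpha$ has a definite sign. Next I would invoke the weak-solution orbital stability, Theorem \ref{thm stab weak}, to guarantee that throughout the existence interval $u(t,\cdot+a(t))$ stays $H^1$-close to $\varphi$; this closeness is what lets me bound the interaction terms (which are controlled by $\|v\|_{H^1}$) and, crucially, the nonlocal term (controlled via the convolution kernels in $\Q$ against the small $H^1$ perturbation) by a small constant. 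With those terms absorbed, the differential equation for $W$ degenerates into a genuine Riccati \emph{inequality} of the form $\dot W \ge \alpha W^2 - \beta$ with $\alpha > 0$ and $\beta$ small.

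From a Riccati inequality $\dot W \ge \alpha W^2 - \beta$ with initial data $W(0)$ chosen beyond the equilibrium $\sqrt{\beta/\alpha}$, a standard comparison argument forces $W(t)$ to reach any prescribed threshold in finite time, and in fact to blow up at some $T < \infty$. Thus the two conclusions are obtained in tandem: picking $W(0)$ bounded below by a fixed constant (while $\|u_0-\varphi\|_{H^1}$ and $\|u_{0x}-\varphi_x\|_{L^\infty}$ are both $< \delta$) yields a finite $t_0$ with $|W(t_0)| > 1$, which is exactly \eqref{final-bound-theorem}, and pushing the comparison to its natural conclusion gives the blow-up time $T<\infty$ for a suitable choice of $u_0$. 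The existence, uniqueness, and the fact that $u(t,\cdot+a(t))$ remains in $C^1_0$ for $t \in [0,T)$ come from the local well-posedness theory in $H^1 \cap C^1_0$ (Theorem \ref{theorem GWP}), together with continuity of the peak location $a(t)$ that identifies where the singularity of $u$ sits.

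The hard part will be establishing the Riccati \emph{inequality} rigorously, i.e. showing that both the interaction terms and the nonlocal $\Q$-term in \eqref{W-0-dynamics} are uniformly small on the whole existence interval. This is where the weak orbital stability of Theorem \ref{thm stab weak} is indispensable and also where the subtlety lies: the stability is stated in $H^1$, but the nonlocal term involves $u_x^3$ and $uu_x^2$ convolved against $\varphi_x$ and $\varphi$, so I must argue that $H^1$-smallness of the perturbation (rather than control of $\|u_x\|_{L^\infty}$, which is precisely what we expect to blow up) suffices to keep this term below $\beta$. The delicate point is that near the peak the large gradient is concentrated, so the convolution against the bounded, decaying kernels $\varphi,\varphi_x$ should smear out and control the nonlocal contribution even as $\|u_x\|_{L^\infty}\to\infty$; making this quantitative, and verifying that the quadratic coefficient $\alpha$ does not degenerate as $u$ stays close to $\varphi$, is the crux of the argument.
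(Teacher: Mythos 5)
Your architecture matches the paper's proof almost step for step (decomposition \eqref{decomp}, well-posedness via characteristics in Theorem \ref{theorem GWP}, weak-solution orbital stability in Theorem \ref{thm stab weak}, the peak dynamics \eqref{W-0-dynamics}, a Riccati-type inequality for blow-up and a threshold crossing for instability), but at the crux that you yourself flag -- uniform smallness of the nonlocal terms -- your proposed mechanism would fail. The terms $\Pa[v](0)$ and $\Q[v](0)$ contain $\varphi \ast v_x^3$ and $\varphi_x \ast \left( v v_x^2 \right)$, and the only bounds the bounded decaying kernels give are of the type $|\varphi \ast v_x^3| \le \|v_x\|_{L^3}^3 \le \|v_x\|_{L^\infty} \|v_x\|_{L^2}^2$: the kernel has size one at the peak, exactly where the gradient concentrates, so no ``smearing'' occurs, and $\|v_x\|_{L^\infty}$ is precisely the quantity expected to grow (and, for the blow-up data, to diverge). $H^1$-smallness alone is therefore insufficient. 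The missing idea is the second conserved functional $F(u)$ from \eqref{cons law 2}: the paper first proves conservation of $E$ and $F$ for the weak solutions (Lemma \ref{lem cons weak}, via mollification and commutator estimates, which is nontrivial in this regularity class), and then in Lemma \ref{lem est PQ} exploits the algebraic identity $\|v_x\|_{L^4}^4 = 3\int_\R (v^4 + 2v^2 v_x^2)\,dx - 3F(v)$ together with interpolation, $\|v_x\|_{L^3}^3 \le \sqrt{3}\,\|v\|_{H^1}\sqrt{2\|v\|_{H^1}^4 - F(v)}$, and an estimate $|F(v)| = \mathcal{O}(\varepsilon^2)$ deduced from $F(u) = F(u_0)$. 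This is what produces the uniform-in-time bound \eqref{control-P-Q} of order $\varepsilon^2$; without it your constant $\beta$ is not small and neither the Riccati inequality nor the exponential-growth estimate closes.

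Two further corrections. First, the seed $v_0(0) + v_{0x}(0^+)$ cannot be ``a fixed nonzero constant independent of $\delta$'': Sobolev embedding gives $|v_0(0)| \le \|v_0\|_{H^1}$ and $|v_{0x}(0^+)| \le \|v_{0x}\|_{L^\infty}$, so the seed is necessarily $< 2\delta$ by \eqref{initial-bound-theorem}. The paper instead takes a seed of size $\mathcal{O}(\varepsilon^2)$ (namely $v_0(0)=0$, $v_{0x}(0^+) = -2C\varepsilon^2$) and accepts $t_0 = \log\left( 2/(C\varepsilon^2) \right)$, which grows as $\delta \to 0$ -- permitted since the theorem lets $t_0$ depend on $\delta$. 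Second, orientation: in \eqref{W-0-dynamics} the quadratic coefficient is $-\frac{1}{2}(1+V^0) < 0$, so the Riccati mechanism drives $W^0_+$ to $-\infty$, and the initial slope perturbation must be taken \emph{negative} ($v_{0x}(0^+) = -30\varepsilon$ in the paper, below the threshold $1 - \sqrt{(1+40\varepsilon)/(1-12\varepsilon)}$); your displayed inequality $\dot W \ge \alpha W^2 - \beta$ with $\alpha > 0$ has the sign reversed and is correct only after replacing $W$ by $-W$. Relatedly, the paper does not obtain the instability part from the quadratic term at all: that term has the favorable sign and is simply dropped, and the growth to the threshold comes from the \emph{linear} term via the integrating factor $e^{-t}$ applied to $V^0 + W^0_+$, yielding $V^0(t)+W^0_+(t) \le e^t \left[ V^0(0)+W^0_+(0)+C\varepsilon^2 \right]$, in accordance with the linearized analysis; finally, the paper covers the possibility $t_0 \ge T$ by a continuity argument, using that $\|v_x(t,\cdot)\|_{L^\infty} \to \infty$ as $t \to T$ while $\|v(t,\cdot)\|_{H^1}$ stays bounded, a case your tandem argument would also need to address.
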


\begin{remark}
The results of Theorems \ref{thm lin stab} and \ref{thm nonlin stab} are very similar to the results found
in \cite{DmitryPreprint} for the CH equation (\ref{ch m}) except that the $H^1$ norm
of the peaked perturbation grows in the linear evolution of the CH equation, whereas the $H^1$ norm
does not grow for the linearized Novikov equation. The discrepancy between the two results
confirm the previous intuition \cite{constantin2001}
that the linearized evolution in $H^1$ does not imply anything for the nonlinear evolution of
the quasilinear equations with peakons and wave breaking.
\end{remark}
\begin{remark}
An interesting outcome of our instability theorem is that it provides a new way to generate wave breaking in the {\it weak} solution setting. To the best of the authors' knowledge, so far the vast literature on the blow-up analysis for quasilinear integrable equations, like the Camassa--Holm equation \cite{Brandolese2014CMP,CH93,CE98,CL09,FF81}, the Degasperis-Procesi equation \cite{Chen16JFA,DP99,ELY06,LY06}, and the Novikov equation \cite{Chen16JFA,BUNovikov}, is performed in the framework of strong solutions. It is plausible that the idea used here can be extended to other peakon models.
\end{remark}

\section{Linear analysis}\label{sec_linear}

Here we investigate the linear stability of peakons and prove Theorem \ref{thm lin stab}. For simplicity,
we consider a single peakon (\ref{peakon}) traveling with the unit speed $c = 1$
and denote it by $\varphi(x) \equiv \varphi_{c=1}(x) = e^{-|x|}$.
Note that ${1\over2} \varphi(x)$ is the Green's function of $1 - \partial_x^2$ on $\R$, that is,
\be\label{green}
(1 - \partial_x^2) \varphi = 2 \delta_0, \qquad (1 - \partial_x^2)^{-1} f = {1\over 2} \varphi \ast f.
\ee
Some further properties of $\varphi$ are given by
\begin{align}\label{squares}
\varphi_x^2(x) = \varphi^2(x), \quad x \in \mathbb{R} \backslash \{0\}
\end{align}
and
\begin{align}
\|\varphi\|_{L^2} = \|\varphi\|_{L^2} = \|\varphi\|_{L^\infty} = \|\varphi_x\|_{L^\infty} = 1. \label{norms}
\end{align}
In what follows, we derive the linearized problem (\ref{lin sim v}), solve it by means of characteristics,
and finally obtain relevant estimates for the proof of Theorem \ref{thm lin stab}.

\subsection{Derivation of the linearized problem}\label{subsec_deriv lin}

To study the linearization of \eqref{novikov weak} around $\varphi$, we decompose $u(t,x)$
as the sum of a modulated peakon and its perturbation $v$ in the form:
\be\label{decomp}
u(t,x) = \varphi(x - a(t)) + v(t, x - a(t)).
\ee
The stationary equation for peakon $\varphi$ is defined for every $x \neq 0$ in the form:
\be
\label{stat-peakon}
(\varphi^2 - 1) \varphi' + \Q[\varphi] = 0,
\ee
where $\Q$ is given by (\ref{def Q}). When we plug in (\ref{decomp}) and (\ref{stat-peakon})
into (\ref{novikov weak}) and truncate at the linear terms in $v$, we
obtain the linearized equation for $v$ in the form:
\be\label{lin v}
(1-\dot a) \varphi_x + v_t - \dot a v_x + (\varphi^2 v)_x + {3\over2}\varphi_x \ast \LC  \varphi^2 v + {1\over2} \varphi_x^2 v + \varphi \varphi_x v_x \RC + {3\over4} \varphi \ast \LC \varphi_x^2 v_x \RC = 0.
\ee
The following proposition allows us to simplify the nonlocal terms in \eqref{lin v}
and write it in the local form (\ref{lin sim v}).

\begin{proposition}\label{prop sim v}
For $v\in H^1$ we have
\be\label{sim nonlocal}
{3\over2}\varphi_x \ast \LC  \varphi^2 v + {1\over2} \varphi_x^2 v +
\varphi \varphi_x v_x \RC + {3\over4} \varphi \ast \LC \varphi_x^2 v_x \RC = 3 \varphi_x \LB v(0)- \varphi v \RB.
\ee
\end{proposition}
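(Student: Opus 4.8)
The plan is to reduce the two nonlocal convolution terms on the left-hand side of \eqref{sim nonlocal} to local ones, exploiting three structural facts recorded above: that $\tfrac12\varphi$ is the Green's function of $1-\partial_x^2$ (so $\tfrac12\varphi\ast f=(1-\partial_x^2)^{-1}f$ and $\tfrac12\varphi_x\ast f=\partial_x(1-\partial_x^2)^{-1}f$), the pointwise identity $\varphi_x^2=\varphi^2$ from \eqref{squares}, and the distributional relation $\varphi_{xx}=\varphi-2\delta_0$ coming from \eqref{green}. First I would use $\varphi_x^2=\varphi^2$ to collapse the argument of the first convolution to $\tfrac32\varphi^2 v+\varphi\varphi_x v_x$ and that of the second to $\varphi^2 v_x$, so that the only derivatives falling on $v$ now sit in the two terms $\tfrac32\varphi_x\ast(\varphi\varphi_x v_x)$ and $\tfrac34\varphi\ast(\varphi^2 v_x)$.

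Next I would integrate by parts in the convolution variable $y$ in each of these two $v_x$-terms, transferring the derivative off $v$ and onto the kernel and the $\varphi$-factors. Since $v\in H^1\subset C(\R)$ this is legitimate even though $v_x$ is only $L^2$: the relevant coefficient functions are of bounded variation, so the integration by parts is a Stieltjes identity paired against the continuous function $v$, with no contribution at $y=\pm\infty$ by decay. The payoff is that the derivative meets two jump discontinuities. At $y=0$ the factor $\varphi\varphi_x=-\mathrm{sgn}(y)\varphi^2$ jumps by $-2$ (equivalently $\partial_y(\varphi\varphi_x)$ carries a term $-2\delta_0$), which, paired against $\varphi_x(x-y)$ evaluated at $y=0$, produces precisely the pointwise contribution proportional to $v(0)\,\varphi_x(x)$. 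At $y=x$ the kernel factor $\varphi_x(x-y)$ jumps, and through $\varphi_{xx}=\varphi-2\delta_0$ this generates the local contribution proportional to $\varphi\varphi_x v$. Since $\varphi$ and $\varphi^2$ are continuous, the second $v_x$-term $\tfrac34\varphi\ast(\varphi^2 v_x)$ contributes no singular term. These two jump mechanisms are exactly the source of the right-hand side $3\varphi_x[v(0)-\varphi v]$, and I expect the weights to come out so that the $v(0)$ coefficient is precisely $3$.

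After the integration by parts, every remaining term is a genuine convolution carrying only $v$ (no $v_x$). Using $\varphi_x^2=\varphi^2$ and $(\varphi^2)'=2\varphi\varphi_x$, these regular leftovers—together with the original non-$v_x$ term $\tfrac32\varphi_x\ast(\tfrac32\varphi^2 v)$—should reduce to just two nonlocal types, namely $\varphi_x\ast(\varphi^2 v)$ and $\varphi\ast(\varphi\varphi_x v)$, whose total coefficients I expect to sum to zero, so that the entire nonlocal part cancels. What survives is then $3\varphi_x v(0)-3\varphi\varphi_x v=3\varphi_x[v(0)-\varphi v]$, the claimed identity. An equivalent route is to observe that both sides vanish at $\pm\infty$ and lie in a space on which $1-\partial_x^2$ is injective, so it suffices to check the identity after applying $1-\partial_x^2$; this replaces the convolutions by the local expression $3\,\partial_x\!\big(\tfrac32\varphi^2 v+\varphi\varphi_x v_x\big)+\tfrac32\varphi^2 v_x$, at the cost of having to match distributions supported at $x=0$.

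I expect the main obstacle to be the bookkeeping of the singular contributions at the two points $y=0$ and $y=x$: $\varphi_x$ is discontinuous both at the peak and along the diagonal, and one must track the associated jumps and $\delta_0$ terms with the correct signs and weights while respecting the limited regularity $v_x\in L^2$. Getting the coefficient of the pointwise value $v(0)$ to come out to exactly $3$, and confirming that the genuinely nonlocal integrals annihilate one another, is the delicate heart of the computation and is where the specific peakon profile $\varphi=e^{-|x|}$ is used; the behavior away from $x=0$ is by contrast a routine algebraic check based on $\varphi_x^2=\varphi^2$ and $\varphi_{xx}=\varphi$.
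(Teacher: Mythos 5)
Your proposal is correct and follows essentially the same route as the paper's proof: integrate by parts in the convolutions using $\varphi_x^2=\varphi^2$ and $\varphi_{xx}=\varphi-2\delta_0$ (equivalently, the jump of $\varphi\varphi_x$ at $y=0$ and of the kernel $\varphi_x(x-y)$ at $y=x$), collect the resulting $\delta$-contributions $3\varphi_x v(0)$ and $-3\varphi\varphi_x v$, and cancel the remaining nonlocal terms. The cancellations you anticipate do check out exactly as claimed: the coefficients of $\varphi_x\ast(\varphi^2 v)$ sum to $\tfrac94-3+\tfrac34=0$ and those of $\varphi\ast(\varphi\varphi_x v)$ to $\tfrac32-\tfrac32=0$, leaving precisely $3\varphi_x\LB v(0)-\varphi v\RB$.
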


\begin{proof}
By using \eqref{squares} and integrating by parts, we obtain
\[
{3\over4} \varphi \ast \LC \varphi_x^2 v_x \RC = {3\over4} \varphi \ast \LC \varphi^2 v_x \RC = {3\over4} \varphi_x \ast \LC \varphi^2 v \RC - {3\over4} \varphi \ast \LB (\varphi^2)_x v \RB.
\]
From \eqref{green} we see that $\varphi_{xx} = \varphi - 2\delta_0$, and hence further using $\varphi(0) = 1$
and integrating by parts, we obtain
\begin{align*}
{3\over2}\varphi_x \ast (\varphi \varphi_x v_x) & = {3\over 4} \varphi_x \ast \LB (\varphi^2)_x v_x \RB = {3\over 4} \varphi_{xx} \ast \LB (\varphi^2)_x v \RB - {3\over4} \varphi_x \ast \LB (\varphi^2)_{xx} v \RB \\
& = {3\over4} \varphi \ast \LB (\varphi^2)_x v \RB - {3\over2} (\varphi^2)_x v - 3 \varphi_x \ast \LB (\varphi^2 - \delta_0\varphi) v \RB \\
& = {3\over4} \varphi \ast \LB (\varphi^2)_x v \RB - 3 \varphi_x \ast \LC \varphi^2 v \RC - 3 \varphi \varphi_x v + 3 \varphi_x v(0).
\end{align*}
Substituting the two representations into the left-hand side of (\ref{sim nonlocal}) completes the proof of the proposition.
\end{proof}

From Proposition \ref{prop sim v} we rewrite the nonlocal term in \eqref{lin v} in the local form:
\be\label{lin v rewrite}
(1-\dot a) \varphi_x + v_t - \dot a v_x + (\varphi^2 v)_x + 3 \varphi_x \LB v(t,0)- \varphi v \RB = 0,
\ee
where if $v \in C(\mathbb{R})$, then the last term is continuous everywhere including $x = 0$ thanks to $\varphi(0) = 1$.
Since $\varphi_x$ is continuous everywhere except at the origin, the other terms of the linearized equation \eqref{lin v rewrite}
are continuous at $x = 0$ if
\begin{equation}
\label{dot-a-linear}
\dot a(t) = 1 + 2 v(t,0) + \mathcal{O}(v(t,0)^2)
\end{equation}
where the remainder term in (\ref{dot-a-linear}) is truncated at the linear approximation.
Plugging \eqref{dot-a-linear} into \eqref{lin v rewrite} and keeping only the linear terms in $v$,
we finally obtain the Cauchy problem (\ref{lin sim v}) for the linearized equation at a single peakon.

\subsection{Solution to the linearized problem}\label{subsec solve lin}

Following the idea of \cite{DmitryPreprint}, we will solve the linearized problem \eqref{lin sim v} using the method of characteristics. For this, we first define the characteristic curves $q(t,s)$ as
\be\label{char}
\begin{cases}
\displaystyle {d q \over dt} = \varphi^2(q) - 1, \\
q(0, s) = s.
\end{cases}
\ee
For any fixed $s\in \R$, the initial-value problem (\ref{char}) has a unique solution
since $\varphi$ is Lipschitz. Moreover, it follows that
\be \label{diffeo}
q_s(t,s) = \exp \LC\int^t_0 2\varphi \varphi_x (q(\tau, s)) \, d\tau \RC > 0
\ee
hence $q(t,\cdot)$ is a diffeomorphism on $\R$ for any $t \in \mathbb{R}$.

Since $\varphi(0) = 1$, we have $q(t, 0) = 0$ for any $t\in \R$, meaning that
the location of the peak of $\varphi$ is invariant under the flow of system \eqref{char}.
Solving \eqref{char} explicitly, we obtain that
\be\label{soln char}
q(t, s) = \left\{\begin{array}{ll}
\displaystyle {1\over2}\log\LB 1 + \LC e^{2s} - 1 \RC e^{-2t} \RB, \ & \ s>0, \\\\
0, &\ x = 0,\\\\
\displaystyle-{1\over2}\log\LB 1 + \LC e^{-2s} - 1 \RC e^{2t} \RB, \ & \ s<0.
\end{array}\right.
\ee
From \eqref{soln char} it follows that $q(t,s) \to 0$ as $s \to 0^{\pm}$.
Define
\be
\label{V-on-characteristics}
V(t,s) := v(t, q(t,s)).
\ee
From \eqref{soln char} we know that when solving \eqref{lin sim v} along the characteristics $q$,
we can consider characteristics with $s>0$ separately from characteristics with $s<0$. This corresponds
to partitioning of $\R$ into $\R^+$ and $\R^-$ in the physical space and suggests us to consider solutions
$v(t,\cdot) \in H^1 \cap C^1_0$ for any $t \in \mathbb{R}$, where
$C^1_0 \subset W^{1,\infty}$ is given by (\ref{space C}).
It follows from (\ref{lin sim v}) and (\ref{char}) that $V(t,s)$ satisfy
\be\label{char-V}
\begin{cases}
\displaystyle {d V \over dt} = \varphi_x(q) \left[ \varphi(q) V - V(t,0) \right], \\
V(0, s) = v_0(s),
\end{cases}
\ee
where we have used that $V(t,0) = v(t,q(t,0)) = v(t,0)$.
It follows from (\ref{char-V}) as $s \to 0^+$ that if $V(t,\cdot) \in C(\mathbb{R})$ for $t \in \mathbb{R}$,
then $V(t,0) = V(0,0) = v(0,0) = v_0(0)$.
Therefore, for $s > 0$ we are solving
\be\label{ODE V}
\begin{cases}
\displaystyle {dV \over dt} = -e^{2q(t,s)} V + e^{-q(t,s)}v_0(0), \\
V(0,s) = v_0(s).
\end{cases}
\ee
Direct computation yields the unique solution to the initial-value problem (\ref{ODE V}) in the form:
\be\label{soln V+}
V(t,s) = {v_0(s) + v_0(0)(e^t - 1) e^{-s} \over \sqrt{1 + (e^{2t} -1) e^{-2s}}}, \quad s>0.
\ee
Clearly we see that $\displaystyle \lim_{s\to 0^+} V(t,s) = v_0(0)$.
Similarly, for $s<0$ we obtain the unique solution in the form:
\be\label{soln V-}
V(t,s) = {v_0(s) - v_0(0)(1 - e^{-t}) e^s \over \sqrt{1 - (1 - e^{-2t}) e^{2s}}}, \quad s < 0,
\ee
satisfying $\displaystyle \lim_{s\to 0^-} V(t,s) = v_0(0)$.

One can also compute explicitly the evolution of $v_x$ along the characteristics. Define
\be
\label{W-on-characteristics}
W(t,s) := v_x(t, q(t,s)).
\ee
Chain rule implies that
\be\label{relation W}
W(s,t) = {V_s(t,s) \over q_s(t,s)}.
\ee
From \eqref{soln char}, \eqref{soln V+}, and \eqref{relation W} we obtain that
\be\label{soln W+}
\begin{split}
W(t,s) = & \sqrt{1 + (e^{2t}-1) e^{-2s}} \LB v'_0(s) - v_0(0) (e^t -1) e^{-s} \RB \\
& + {(e^{2t}-1)e^{-2s}\LB  v_0(s) + v_0(0) (e^t -1) e^{-s} \RB \over \sqrt{1 + (e^{2t}-1)e^{-2s}}}, \quad s>0.
\end{split}
\ee
It follows from (\ref{soln W+}) as $s \to 0^+$ that
\be\label{grow 0+}
\lim_{s\to 0^+} W(t,s) = v_0(0)(e^t - 1) + v_0'(0^+) e^t.
\ee
Hence, the gradient $\lim_{x \to 0^+} v(t,x) = \lim_{s \to 0^+} W(t,s)$ grows exponentially in time.
Similarly, from \eqref{soln char}, \eqref{soln V-}, and \eqref{relation W} we obtain that
\be\label{negative s}
\begin{split}
W(t,s) = & \ \sqrt{1 + (e^{-2t}-1) e^{2s}} \LB v'_0(s) - v_0(0) (1 - e^{-t}) e^{s} \RB \\
& + {(1 - e^{-2t}) e^{2s}\LB  v_0(s) - v_0(0) (1 - e^{-t}) e^{s} \RB \over \sqrt{1 - (1 - e^{-2t})e^{2s}}}, \quad s<0,
\end{split}
\ee
from which we obtain
\be\label{grow 0-}
\lim_{s\to 0^-} W(t,s) = v_0(0)(1 - e^{-t}) + v_0'(0^-) e^{-t}.
\ee
Hence, the gradient $\lim_{x \to 0^-} v(t,x) = \lim_{s \to 0^-} W(t,s)$ decays exponentially in time.

The following lemma justifies the solution
constructed in \eqref{soln char}, \eqref{soln V+}, \eqref{soln V-}, \eqref{soln W+},
and \eqref{negative s} and provides useful estimates.

\begin{lemma}\label{lem lin problem}
For any $v_0 \in H^1 \cap C^1_0$, the Cauchy problem \eqref{lin sim v} admits
a unique global solution $v \in C(\R; H^1 \cap C^1_0)$ satisfying the estimates:
\begin{align}
& \|v(t,\cdot)\|_{L^\infty(\R^+)} \le |v_0(0)| + \|v_0\|_{L^\infty(\R^+)}, \label{infty bound plus}\\
%& \|v(t, \cdot)\|_{L^\infty(\R^-)} \le |v_0(0)| + \|v_0\|_{L^\infty(\R^-)}, \label{infty bound minus}\\
& \|v_x(t, \cdot)\|_{L^\infty(\R^+)} \ge |v_0(0) + v'_0(0^+)| e^t - |v_0(0)|, \label{deriv infty lower bound}
%& \|v_x(t, \cdot)\|_{L^\infty(\R^-)} \le 2|v_0(0)| + \|v_0\|_{L^\infty(\R^-)} + \|v'_0\|_{L^\infty(\R^-)}, \label{deriv infty upper bound}
\end{align}
for any $t > 0$.
\end{lemma}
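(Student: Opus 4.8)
The plan is to treat the explicit formulas \eqref{soln V+}--\eqref{negative s} as a candidate solution and then verify, in order, (i) that they genuinely solve \eqref{lin sim v} in $H^1 \cap C^1_0$, (ii) uniqueness, and (iii) the two estimates. For existence I would first record that the characteristic problem \eqref{char} has a unique global solution for every $s \in \R$ (the field $\varphi^2 - 1$ being bounded and Lipschitz), that $q(t,\cdot)$ is an orientation-preserving diffeomorphism of $\R$ by \eqref{diffeo}, and that it fixes the peak $0$ while mapping $\R^\pm$ onto $\R^\pm$ by \eqref{soln char}. This lets me reconstruct $v(t,x) := V(t, q(t,\cdot)^{-1}(x))$ from \eqref{V-on-characteristics}; because the characteristic through the peak is stationary, $V(t,0) \equiv v_0(0)$, so the transport equation \eqref{char-V} decouples into the two scalar linear non-autonomous problems \eqref{ODE V}, whose integrating-factor solutions are exactly \eqref{soln V+}, \eqref{soln V-}. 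Differentiating the reconstruction and using the chain rule (that is, $\tfrac{dV}{dt} = v_t + (\varphi^2-1)v_x$ along \eqref{char}) confirms that $v$ solves \eqref{lin sim v} classically on each half-line, and $\lim_{s\to0^\pm}V(t,s)=v_0(0)$ gives continuity across $x=0$; hence $v(t,\cdot) \in C^1_0$ once $v_x$ is shown to be bounded, which I address below.

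For uniqueness I would argue that the method of characteristics sets up a bijection between solutions of \eqref{lin sim v} in $C(\R; C^1_0)$ and solutions of the ODE system \eqref{char-V}: any solution composed with the fixed characteristics must satisfy \eqref{char-V}, and since the origin characteristic forces $V(t,0)=v_0(0)$, the remaining equation for $s\neq0$ is a scalar linear ODE with prescribed forcing and therefore has a unique solution. Transporting back through the diffeomorphism $q(t,\cdot)$ then pins down $v$ uniquely.

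The two estimates follow directly from the formulas once the diffeomorphism property is in hand, since $\|v(t,\cdot)\|_{L^\infty(\R^+)} = \sup_{s>0}|V(t,s)|$ and $\|v_x(t,\cdot)\|_{L^\infty(\R^+)} = \sup_{s>0}|W(t,s)|$. For \eqref{infty bound plus} I would split \eqref{soln V+} by the triangle inequality, bound the first quotient by $|v_0(s)|\le\|v_0\|_{L^\infty(\R^+)}$ since the denominator exceeds $1$, and control the second by the elementary inequality $(e^t-1)e^{-s}\le\sqrt{1+(e^{2t}-1)e^{-2s}}$, which after squaring reduces to $-2(e^t-1)e^{-2s}\le 1$. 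For the lower bound \eqref{deriv infty lower bound} I would estimate the supremum from below by the boundary limit \eqref{grow 0+}, namely $\sup_{s>0}|W(t,s)|\ge\lim_{s\to0^+}|W(t,s)|=|v_0(0)(e^t-1)+v_0'(0^+)e^t|$, rewrite this as $|(v_0(0)+v_0'(0^+))e^t - v_0(0)|$, and apply the reverse triangle inequality.

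I expect the main obstacle to be the $C^1_0$ and $H^1$ bookkeeping rather than the estimates themselves: one must show that \eqref{soln W+} (and its counterpart \eqref{negative s}) stays bounded in $s$ uniformly on compact $t$-intervals, the delicate term being $(e^{2t}-1)e^{-2s}/\sqrt{1+(e^{2t}-1)e^{-2s}}$, which I would bound by $\sqrt{e^{2t}-1}$ after writing it as $\sqrt{\mu}\,\sqrt{\mu/(1+\mu)}$ with $\mu=(e^{2t}-1)e^{-2s}$, thereby placing $v_x(t,\cdot)$ in $L^\infty$; and then to upgrade pointwise-in-$t$ membership to continuity of $t\mapsto v(t,\cdot)$ in $C^1_0$ and in $H^1$. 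The $H^1$ membership I would obtain by the change of variables $x=q(t,s)$ via \eqref{diffeo} together with the conservation \eqref{lin H^1 cons}, and the time-continuity from the smooth dependence of \eqref{soln V+}--\eqref{negative s} on $t$, uniform on compact sets.
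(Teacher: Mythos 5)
Your proposal is correct and follows essentially the same route as the paper's proof: solve \eqref{char-V} explicitly along the characteristics \eqref{soln char}, use that $q(t,\cdot)$ is a diffeomorphism fixing the origin and mapping $\R^\pm$ onto $\R^\pm$ to get $\|v\|_{L^\infty(\R^\pm)}=\|V\|_{L^\infty(\R^\pm)}$ and $\|v_x\|_{L^\infty(\R^\pm)}=\|W\|_{L^\infty(\R^\pm)}$, then bound \eqref{soln V+} by the triangle inequality (your inequality $(e^t-1)e^{-s}\le\sqrt{1+(e^{2t}-1)e^{-2s}}$ is precisely the computation hidden behind the paper's one-line bound $|V(t,s)|\le|v_0(s)|+|v_0(0)|$), and bound $\|W(t,\cdot)\|_{L^\infty(\R^+)}$ from below by the boundary limit \eqref{grow 0+} combined with the reverse triangle inequality. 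Your uniqueness argument (bijection with the characteristic ODE system, with $V(t,0)=v_0(0)$ forced by the stationary peak characteristic) matches the paper's appeal to ODE uniqueness theory for \eqref{char-V} in the class of locally Lipschitz functions, and your explicit boundedness check for \eqref{soln W+} is more detailed than anything the paper writes out.

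One point to repair: for the $H^1$ membership you invoke the conservation \eqref{lin H^1 cons}, i.e.\ Lemma \ref{lem lin-H1}, but that lemma is stated and proved only for the solution already known to lie in $C(\R;H^1\cap C^1_0)$, so citing it inside the existence proof is circular as written. The fix is cheap and is effectively what the paper does: use \eqref{diffeo} together with the asymptotics $q(t,s)\sim s$ as $|s|\to\infty$, which for fixed $t$ give $0<\inf_{s}q_s(t,s)\le\sup_s q_s(t,s)<\infty$ on each half-line, and then deduce $v(t,\cdot),v_x(t,\cdot)\in L^2$ directly from the finiteness of $\int_0^\infty\left[V(t,s)^2+W(t,s)^2\right]q_s(t,s)\,ds$ with $V,W$ given by \eqref{soln V+} and \eqref{soln W+} (and likewise on $\R^-$). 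Equivalently, carry out the direct integration along characteristics described in the remark following Lemma \ref{lem lin-H1}, which yields $H^1$ membership and the conservation identity simultaneously, without any forward reference.
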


\begin{proof}
If $v_0 \in H^1 \cap C^1_0$, then the solution in (\ref{soln V+}) and (\ref{soln V-})
satisfies $V(t,\cdot) \in H^1 \cap C^1_0$ for any $t \in \R$ so that $V(t,\cdot)$ is locally Lipschitz continuous everywhere on $\mathbb{R}$.
By the existence and uniqueness theory for differential equations,
$V(t,s)$ is the unique solution of the initial-value problem (\ref{char-V}) in this class of functions.
Moveover, thanks to the property (\ref{diffeo}) and the property $q(t,s) \sim s$ as $|s| \to \infty$,
we have $v(t,\cdot) \in H^1 \cap C^1_0$ for any $t \in \R$.

Since $q$ is a diffeomorphism on $\R^+ \to \R^+$ and $\R^- \to \R^-$, we have
\[
\|v\|_{L^\infty(\R^\pm)} = \|V\|_{L^\infty(\R^\pm)}, \qquad \|v_x\|_{L^\infty(\R^\pm)} = \|W\|_{L^\infty(\R^\pm)}.
\]
From \eqref{soln V+} we infer that
\[
|V(t,s)| \le |v_0(s)| + |v_0(0)|, \quad s > 0,
\]
which yields (\ref{infty bound plus}). It follows from \eqref{grow 0+} that
\[
\|W(t,\cdot)\|_{L^\infty(\R^+)} \ge \lim_{s\to 0^+} |W(t,s)| \ge \LV v_0(0) + v'_0(0^+) \RV e^t - |v_0(0)|,
\]
which yields (\ref{deriv infty lower bound}).
\end{proof}

\begin{remark}
Even if $v_0 \in H^1 \cap C^1$, the solution of the linearized problem (\ref{lin sim v}) only exists
in $v(t,\cdot) \in H^1 \cap C^1_0$ because the jump of the derivative $v_x$ across $x = 0$
appears instantaneously in time:
$$
[ v_x(t,x) ]^+_- := \lim_{x \to 0^+} v_x(t,x) - \lim_{x \to 0^-} v_x(t,x) = 2 v_0(0) (\cosh t - 1) + 2 v_0'(0) \sinh t,
$$
where $v_0'(0) = \lim_{x \to 0^+} v_{0x}(x) = \lim_{x \to 0^-} v_{0x}(x)$.
\end{remark}

\subsection{$H^1$ conservation of $v$}

 Estimate \eqref{deriv infty lower bound} in Lemma \ref{lem lin problem} indicates
 the linear $W^{1,\infty}$ instability of the Novikov peakons. For the Camassa--Holm peakons
 it is showed \cite{DmitryPreprint} that the perturbation are also $H^1$ linearly unstable.
 However for Novikov peakons, we will prove that the $H^1$ norm of the linearized perturbation $v$
 satisfying \eqref{lin sim v} is conserved for all time.

\begin{lemma}
\label{lem lin-H1}
The unique global solution $v \in C(\R; H^1 \cap C^1_0)$ in Lemma \ref{lem lin problem} satisfies
\be\label{lin H^1 cons}
\|v(t, \cdot)\|^2_{H^1(\R^\pm)} = \|v_0\|^2_{H^1(\R^\pm)}
\ee
for every $t \in \mathbb{R}$.
\end{lemma}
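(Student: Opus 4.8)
The plan is to compute the time derivative of $\|v(t,\cdot)\|_{H^1(\R^+)}^2$ directly and show it vanishes; the argument on $\R^-$ is identical up to the sign of $\varphi_x$. The idea is to work in the characteristic coordinate $s$ from \eqref{char}, which turns all relevant quantities into solutions of ODEs and keeps the peak location $x=0$ fixed. First I would change variables via $x=q(t,s)$. Since $q(t,\cdot)$ is an orientation-preserving diffeomorphism of $\R^+$ onto itself by \eqref{diffeo}, and $V,W$ are as in \eqref{V-on-characteristics}, \eqref{W-on-characteristics}, this gives
\[
\|v(t,\cdot)\|_{H^1(\R^+)}^2 = \int_0^\infty \bigl( V(t,s)^2 + W(t,s)^2 \bigr)\, q_s(t,s)\, ds .
\]
The explicit formulas \eqref{soln V+} and \eqref{soln W+}, together with $q_s>0$, show that the integrand and its $t$-derivative are dominated by an $L^1(\R^+)$ function uniformly on compact time intervals (the $H^1$ data controls the decay as $s\to\infty$), so differentiation under the integral sign is legitimate.

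Next I would differentiate the integrand pointwise in $t$. On $\R^+$ we have $\varphi=e^{-x}$, hence $\varphi_x=-\varphi$ and $\varphi_x^2=\varphi^2$ by \eqref{squares}, and $V(t,0)=v_0(0)=:b$ is constant in time. From \eqref{char-V} I read off $\dot V = \varphi_x(q)\bigl(\varphi(q)V-b\bigr)=-\varphi^2 V+\varphi b$, and from \eqref{diffeo}, $\dot q_s = 2\varphi\varphi_x(q)\,q_s=-2\varphi^2 q_s$. The evolution of $W$ I would obtain either by differentiating the linearized equation \eqref{lin sim v} once in $x$, or from $W=V_s/q_s$ in \eqref{relation W}; either route yields, on $\R^+$,
\[
\dot W = \varphi^2 W - \varphi b + 2\varphi^2 V .
\]
Substituting these three relations into $\frac{d}{dt}\bigl[(V^2+W^2)q_s\bigr]$, the $W^2$-contributions cancel outright, leaving only terms proportional to $V^2$, $VW$, and $b$.

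Finally, I would pull the surviving expression back to the physical variable through $q_s\,ds=dx$, obtaining
\[
\frac{d}{dt}\|v(t,\cdot)\|_{H^1(\R^+)}^2 = \int_0^\infty \bigl[ -4\varphi^2 v^2 + 4\varphi^2 v v_x + 2b\,\varphi v - 2b\,\varphi v_x \bigr]\, dx ,
\]
and integrate by parts in $x$. Using $(\varphi^2)_x=-2\varphi^2$, the boundary values $\varphi(0)=1$ and $v(t,0)=b$, and the decay $v,v_x\to 0$ as $x\to\infty$, the term $\int_0^\infty 4\varphi^2 v v_x\,dx$ produces $-2b^2$ plus a multiple of $\int\varphi^2 v^2$ that exactly cancels the first term, while $-2b\int_0^\infty \varphi v_x\,dx$ produces $+2b^2$ plus a multiple of $\int \varphi v$ that exactly cancels the third term. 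Every contribution annihilates, so the derivative is zero and $\|v(t,\cdot)\|_{H^1(\R^+)}^2$ is constant; the same computation on $\R^-$, where instead $\varphi_x=+\varphi$, gives the corresponding conservation and completes the proof.

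The main obstacle I expect is twofold. First, deriving the correct $W$-equation and tracking the sign of $\varphi_x$, which differs on $\R^+$ and $\R^-$: an error there destroys the cancellation. Second, the bookkeeping of the boundary terms at $x=0$ carrying the constant $b=v_0(0)$, whose precise coefficients are exactly what make the $b^2$ and the cross terms cancel. Working on $\R^+$ and $\R^-$ separately, rather than across the corner at $x=0$, is what keeps every integration by parts legitimate at the limited $C^1_0$ regularity of $v$.
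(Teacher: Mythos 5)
Your proof is correct, and I checked the computations: on $\R^+$ the ODEs $\dot V=-\varphi^2V+\varphi b$, $\dot q_s=-2\varphi^2 q_s$, and $\dot W=\varphi^2W-\varphi b+2\varphi^2V$ are all right, the $W^2$ terms do cancel pointwise in $\frac{d}{dt}\bigl[(V^2+W^2)q_s\bigr]$, the surviving integrand $-4\varphi^2v^2+4\varphi^2vv_x+2b\varphi v-2b\varphi v_x$ matches, and the integrations by parts with $(\varphi^2)_x=-2\varphi^2$, $\varphi(0)=1$, $v(t,0)=b$ annihilate everything; the $\R^-$ case goes through with the flipped sign as you say. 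Your route, however, is not the one the paper takes: the paper's proof of Lemma \ref{lem lin-H1} works entirely in physical space, multiplying the linearized equation \eqref{lin sim v} by $v$ and its $x$-derivative \eqref{lin der v} by $v_x$, integrating each over $\R^+$, and adding the two identities \eqref{lin L^2 v} and \eqref{lin L^2 v_x sim} so that the terms $\pm 2\int_0^\infty\varphi\varphi_x v^2\,dx$ and $\mp v(t,0)\int_0^\infty\varphi_x v\,dx$ cancel in pairs. The paper does separately record, in the remark following the lemma, a characteristics-based alternative, but that version simply plugs the explicit solutions \eqref{soln V+}, \eqref{soln W+} into $\int_0^\infty(V^2+W^2)q_s\,ds$ and integrates; you instead exploit only the ODE structure of $V$, $W$, $q_s$, never needing the closed-form solutions, and then pull back to $x$ for the final cancellation. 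What your hybrid buys is twofold: the time differentiation under the integral and the continuity of $V(t,0)=v_0(0)$ are cleanly justified at the $C^1_0$ regularity, and the key $W^2$ cancellation is visible pointwise along characteristics before any integration, whereas the paper's energy computation manipulates $\int_0^\infty v_xv_{xx}\,dx$ terms that are more delicate to justify for merely piecewise-$C^1$ solutions (your observation that $W=V_s/q_s$ lets you avoid second derivatives altogether is the cleanest route there). What the paper's version buys is brevity: two short energy identities whose sum is manifestly zero, with no change of variables. One small presentational gap on your side: the domination claim justifying differentiation under the integral leans on "the explicit formulas," which slightly undercuts your otherwise formula-free argument; it would be tidier to note that $V,W,q_s$ solve ODEs with coefficients bounded uniformly on compact time intervals, so the integrand is controlled by a fixed multiple of $v_0(s)^2+v_{0x}(s)^2+e^{-2s}$, which is in $L^1(\R^+)$.
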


\begin{proof}
Multiplying the linearized equation \eqref{lin sim v} by $v$ and integrating on $\R^+$ using integration by parts we have
\be\label{lin L^2 v}
{1\over2}{d\over dt} \|v\|^2_{L^2(\R^+)} - 2 \int^\infty_0 \varphi \varphi_x v^2 \,dx + v(t,0) \int^\infty_0 \varphi_x v \,dx = 0.
\ee
Differentiating \eqref{lin sim v} with respect to $x$ yields
\be \label{lin der v}
v_{xt} + (\varphi^2v_x)_x - v_{xx} + \varphi_{xx} v(0) - (\varphi \varphi_x v)_x = 0
\ee
Multiplying (\ref{lin der v} by $v_x$ and integrating over $\R^+$, we obtain
\be\label{lin L^2 v_x}
\begin{split}
{1\over2}{d\over dt} \|v_x\|^2_{L^2(\R^+)} & + \int^\infty_0 \left( (\varphi^2)_x v_x^2 + \varphi^2 v_x v_{xx} \right) \,dx
- \int^\infty_0 v_x v_{xx} \,dx + v(t,0)\int^\infty_0 \varphi v_x \,dx \\
& - \int^\infty_0 \left( 2 \varphi^2 vv_x + \varphi \varphi_x v_x^2 \right) \,dx = 0.
\end{split}
\ee
where we have used that $\varphi_{xx} = \varphi$ and $\varphi_x^2 = \varphi^2$ on $\R^+$.
Using the fact that $\varphi(0) = 1$, $\varphi_x(0^+) = -1$, we integrate by parts and simplify (\ref{lin L^2 v_x}) to the form:
\be\label{lin L^2 v_x sim}
{1\over2}{d\over dt} \|v_x\|^2_{L^2(\R^+)} + 2 \int^\infty_0 \varphi \varphi_x v^2 \,dx - v(t,0) \int^\infty_0 \varphi_x v \,dx = 0.
\ee
Adding \eqref{lin L^2 v} and (\ref{lin L^2 v_x sim}) yields
\[
{d\over dt} \|v\|^2_{H^1(\R^+)} = 0, \quad \Rightarrow \quad \|v(t,\cdot)\|^2_{H^1(\R^+)} = \|v_0\|^2_{H^1(\R^+)}, \ \text{ for all }\ t>0.
\]
Similarly we can prove the same result on $\R^-$, and hence we conclude the proof.
\end{proof}

\begin{remark}
Lemma \ref{lem lin-H1} can be proven by integrating the explicit solutions (\ref{soln V+}) and (\ref{soln W+}) on $\mathbb{R}^+$
along the characteristics (\ref{soln char}) with the chain rule:
\[
\| v(t,\cdot) \|^2_{H^1(\R^+)} = \int_0^{\infty} \left[ V(t,s)^2 + W(t,s)^2 \right] q_s(t,s) ds = \| v_0 \|^2_{H^1(\R^+)},
\]
and similarly with the explicit solutions (\ref{soln V-}) and (\ref{negative s}) on $\mathbb{R}^-$.
\end{remark}

\begin{proof1}{\em of Theorem \ref{thm lin stab}.}
Lemma \ref{lem lin problem} gives the existence of the unique solution $v \in C(\mathbb{R},H^1\cap C^1_0)$
to the linearized problem \eqref{lin sim v} for any initial datum $v_0 \in H^1 \cap C^1_0$ satisfying the estimate
\eqref{lin W growth}. Lemma \ref{lem lin-H1} gives the $H^1$ conservation \eqref{lin H^1 cons}.
\end{proof1}

\section{Nonlinear analysis}\label{sec nonlinear}

Here we investigate the nonlinear dynamics of perturbations near a single peakon and prove Theorem \ref{thm nonlin stab}.
In what follows, we review weak solutions for the Cauchy problem (\ref{Cauchy nov}),
obtain an improved version of the $H^1$-orbital stability of a single peakon compared to Theorem A,
derive the nonlinear system for peaked perturbations to a single peakon, solve this system
with the method of characteristics, and obtain relevant estimates for the proof of Theorem \ref{thm nonlin stab}.

\subsection{Weak solution theory}\label{subsec weak soln}

Let's first recall two known results for global weak solutions to the Cauchy problem of the Novikov equation (\ref{Cauchy nov}).
The first result holds for initial datum $u_0 \in H^1$ and assumes the sign condition on $m_0 := u_0 - u_{0,xx}$.

\begin{theorem}[\cite{WY11}]\label{thm weak m}
For any $u_0 \in H^1$ with $m_0 \in \M_+(\R)$, where $\M_+$ is the space of
non-negative finite Radon measures on $\R$, the Cauchy problem \eqref{Cauchy nov} admits
a unique global weak solution $u\in W^{1,\infty}(\R^+ \times \R) \cap C(\R^+; H^1(\R))$
such that $m(t,\cdot) \in \M^+(\R)$ for all $t>0$, where $m := u - u_{xx}$. Moreover, $E(u)$ and $F(u)$ are conservation laws.
\end{theorem}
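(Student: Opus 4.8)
The plan is to construct the global weak solution by a vanishing-regularization and compactness argument, the crucial structural feature being that the momentum form \eqref{novikov m} is a transport equation which propagates the sign condition $m \ge 0$; uniqueness would then be extracted from the Lagrangian formulation, which is available precisely because the sign condition forces $u$ into $W^{1,\infty}$. First I would mollify the data, setting $u_0^\varepsilon := \rho_\varepsilon * u_0 \in H^\infty$, noting that convolution with a non-negative kernel keeps $m_0^\varepsilon := u_0^\varepsilon - u_{0xx}^\varepsilon \ge 0$. The local well-posedness theory cited in the introduction furnishes smooth local solutions $u^\varepsilon$, and writing $m^\varepsilon = u^\varepsilon - u^\varepsilon_{xx}$, equation \eqref{novikov m}, namely $m_t + u^2 m_x + \tfrac32 (u^2)_x m = 0$, is a linear transport equation for $m^\varepsilon$ along the characteristics $\dot x = (u^\varepsilon)^2$. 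Integrating the resulting ODE for $m^\varepsilon$ along these curves shows that $m^\varepsilon(t,\cdot) \ge 0$ is preserved on the whole existence interval.

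The sign condition is the decisive gain. From the representation $u^\varepsilon = \tfrac12 \varphi \ast m^\varepsilon$ in \eqref{green} with $m^\varepsilon \ge 0$ I obtain the pointwise bound $|u^\varepsilon_x| \le u^\varepsilon$, while the conserved energy $E(u^\varepsilon) = \|u^\varepsilon\|_{H^1}^2$ in \eqref{cons law 1} together with the one-dimensional estimate $\|u^\varepsilon\|_{L^\infty}^2 \le \tfrac12 E(u^\varepsilon)$ gives a uniform-in-$(t,\varepsilon)$ bound on $\|u^\varepsilon\|_{W^{1,\infty}}$. This a priori bound rules out gradient blow-up, so the local approximants are in fact global with uniform $W^{1,\infty} \cap H^1$ control; it also keeps the total mass of $m^\varepsilon$ finite on every finite interval, since $\tfrac{d}{dt}\int m^\varepsilon\,dx = -\tfrac12 \int (u^\varepsilon_x)^3\,dx$ is controlled by $\|u^\varepsilon_x\|_{L^\infty} E(u^\varepsilon)$.

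Next I would pass to the limit $\varepsilon \to 0$. The uniform bounds yield weak-$*$ compactness of $\{m^\varepsilon(t,\cdot)\}$ in $\M_+(\R)$ and, through an Aubin--Lions type argument, strong convergence of $u^\varepsilon$ in $C_{loc}$ and in $C([0,T]; H^1_{loc})$ to a limit $u$ with $u(t,\cdot)\in H^1$ and $m(t,\cdot)\in\M_+(\R)$. I would then check that $u$ solves the convolution form \eqref{novikov weak} in the weak sense; the $H^1$ continuity in time is upgraded from weak to strong using conservation of $E$. The step I expect to be the \emph{main obstacle} is identifying the limit of the genuinely nonlinear part of $\Q[u]$ in \eqref{def Q}, in particular the cubic gradient term $\tfrac14 \varphi \ast (u^\varepsilon_x)^3$, which is controlled only in $L^\infty$: passing to the limit here requires strong, or at least pointwise a.e., convergence of $u^\varepsilon_x$, and this is where the sign condition must again be used, since $m^\varepsilon \ge 0$ supplies a one-sided $BV$-type bound on $u^\varepsilon_x$ and hence pointwise convergence along a subsequence via Helly's selection theorem.

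Finally, for the conservation laws and uniqueness. The quantities $E$ and $F$ in \eqref{cons law 1}--\eqref{cons law 2} are conserved for the smooth approximants by the bi-Hamiltonian structure, and I would transfer them to the limit by the regularize-and-commute strategy later formalized in Lemma \ref{lem cons weak}, estimating the commutator remainders by the uniform $W^{1,\infty}$ bound before sending the regularization parameter to zero. For uniqueness I would recast the problem in Lagrangian coordinates: since $u \in W^{1,\infty}$, the velocity $u^2$ is Lipschitz, the characteristic flow $\dot x = u^2$ is well-defined and bi-Lipschitz, and the evolution of $(u,u_x)$ along characteristics closes into an integral system whose nonlocal terms are Lipschitz in the uniform norm. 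A Grönwall estimate on the difference of two solutions with the same data, carried out in these coordinates with coefficients kept bounded by conservation of $E$, then forces the two solutions to coincide. Thus the $W^{1,\infty}$ bound produced by the sign condition is what makes both the compactness closure and the Lagrangian uniqueness argument possible.
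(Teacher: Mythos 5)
Your proposal is correct in outline, but it takes a genuinely different route from the paper, because the paper does not reprove this theorem at all: Theorem \ref{thm weak m} is quoted from \cite{WY11}, and the authors' only original contribution is the remark that follows it, where the original statement is strengthened in two ways --- (i) the regularity $u \in L^\infty(\R^+;H^1)$ of \cite{WY11} is upgraded to $u \in C(\R^+;H^1)$ by combining conservation of $E(u)$ with weak continuity in time, and (ii) conservation of $F(u)$, which \cite{WY11} does not assert, is obtained by the regularize-and-commute computation carried out in the proof of Lemma \ref{lem cons weak}. You, by contrast, reconstruct the entire existence--uniqueness theory from scratch: mollification of the data preserving $m_0^\varepsilon \ge 0$, propagation of the sign along the characteristics of \eqref{novikov m}, the pointwise bound $|u_x^\varepsilon| \le u^\varepsilon$ from $u^\varepsilon = \tfrac12 \varphi \ast m^\varepsilon$ in \eqref{green}, and the resulting uniform $W^{1,\infty}$ control via $\|u\|_{L^\infty}^2 \le \tfrac12 E(u)$ --- this is indeed the mechanism behind \cite{WY11} (and its Camassa--Holm precursor of Constantin--Molinet \cite{constantin2000global}), and your bookkeeping is right, e.g.\ the identity $\frac{d}{dt}\int_\R m\,dx = -\tfrac12 \int_\R u_x^3\,dx$ checks out. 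What the paper's route buys is economy: only the two strengthenings actually needed downstream are proved, and Lemma \ref{lem cons weak} does double duty since it also applies to solutions without the sign condition. What your route buys is self-containedness, at the cost that the two hardest steps remain schematic: the a.e.\ convergence of $u_x^\varepsilon$ needed for the cubic term $\varphi \ast (u_x^\varepsilon)^3$ (note that $m \ge 0$ with locally finite mass actually yields a \emph{two-sided} $BV_{loc}$ bound via $u_{xx} = u - m$, not merely a one-sided one, and your Helly argument still needs a diagonal extraction in $t$ together with time-equicontinuity), and the uniqueness argument, where $u_x$ is only $BV$, so the Lagrangian system for $(u,u_x)$ is not classical at the atoms of $m$ and an off-the-shelf ODE uniqueness theorem does not apply; one needs a Gr\"onwall estimate in the spirit of \cite{constantin2000global} adapted to measure-valued momentum. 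Neither of these is a fatal gap --- both are standard in this literature --- but as written they are sketches, whereas the paper deliberately sidesteps them by citing \cite{WY11} and proving only the upgrades.
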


\begin{remark}
The statement we give in Theorem \ref{thm weak m} is stronger than the original statement of \cite[Theorem 3.1]{WY11}. 
Firstly, the solution constructed in \cite{WY11} has weaker regularity $u \in L^{\infty}(\R^+; H^1(\R))$. 
However one can improve it to the strong topology $u \in C(\R^+; H^1(\R))$ by further using the conservation of $E(u)$. 
Secondly, \cite{WY11} only asserts the conservation of $E(u)$. In fact a direct computation, see the proof of Lemma \ref{lem cons weak}, 
allows one to further prove the conservation of $F(u)$.
\end{remark}

The next result holds for the initial datum $u_0$ in the natural energy space $H^1 \cap W^{1,4}$
without the sign condition on $m_0$.

\begin{theorem}[\cite{Chen18IUMJ}]\label{thm weak u}
Given $u_0 \in H^1 \cap W^{1,4}$. Then the Cauchy problem \eqref{Cauchy nov} admits
a unique global weak solution $u(t,\placeholder) \in H^1 \cap W^{1,4}$ for all $t \ge 0$.
Moreover, $E(u)$ is a conservation law.
\end{theorem}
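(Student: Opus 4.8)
The plan is to construct the solution by vanishing viscosity, to recover conservation of $E$ in the limit through a commutator argument, and to obtain uniqueness by passing to Lagrangian coordinates where the flow becomes a locally Lipschitz ODE. The organizing principle is that the space $H^1\cap W^{1,4}$ is exactly adapted to the two conservation laws \eqref{cons law 1} and \eqref{cons law 2}: the algebraic identity
\[
\frac{1}{3}\int_\R u_x^4\,dx = \int_\R \left( u^4 + 2u^2 u_x^2 \right) dx - F(u)
\]
shows that, whenever $E$ controls $\|u\|_{L^\infty}$, $\|u\|_{L^2}$ and $\|u_x\|_{L^2}$ (which it does in one dimension), a bound on $F$ is equivalent to a bound on $\|u_x\|_{L^4}$. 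This is what makes every nonlinearity in the convolution form \eqref{novikov weak} lie in $L^2$: with $u\in L^\infty$ and $u_x\in L^4$ one has $uu_x^2\in L^2$ and $u_x^3\in L^{4/3}$, and the kernels $\tfrac12\varphi,\tfrac12\varphi_x$ from \eqref{green} keep the nonlocal operator $\Q$ of \eqref{def Q} under control.

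First I would regularize \eqref{Cauchy nov} by the parabolic equation $u^\varepsilon_t+(u^\varepsilon)^2u^\varepsilon_x+\Q[u^\varepsilon]=\varepsilon u^\varepsilon_{xx}$ with mollified data $u_0^\varepsilon\to u_0$ in $H^1\cap W^{1,4}$; the viscous term renders the problem semilinear parabolic with a unique global smooth solution. The crucial point is uniform-in-$\varepsilon$ bounds. The inviscid part of the flow conserves $E$, while the viscous term contributes the nonpositive dissipation $-2\varepsilon\int(u^\varepsilon_x)^2-2\varepsilon\int(u^\varepsilon_{xx})^2$, so $E(u^\varepsilon(t))\le E(u_0)$, giving a uniform $H^1$ bound and hence $\|u^\varepsilon(t)\|_{L^\infty}\le C$. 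The uniform $W^{1,4}$ bound then follows from the identity above once $F(u^\varepsilon)$ is controlled along the approximation; because the viscous modification of $F$ is not sign-definite, keeping $F(u^\varepsilon(t))$ within a bounded band is a delicate higher-order estimate in which the viscous dissipation must absorb a supercritical quintic gradient nonlinearity. I expect this to be one of the two technical cruxes.

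With uniform bounds in $L^\infty_t(H^1\cap W^{1,4})$ and $u^\varepsilon_t\in L^\infty_t L^2$ read off from the equation, an Aubin--Lions argument yields a subsequence converging strongly in $C([0,T];L^2_{\mathrm{loc}})$ and weak-$\ast$ in $L^\infty_t(H^1\cap W^{1,4})$. The genuine difficulty in passing to the limit is the quadratic dependence on $u_x$: weak convergence of $u^\varepsilon_x$ is not enough to identify the limit of $u^\varepsilon(u^\varepsilon_x)^2$, so one must rule out oscillation and concentration in the gradient. Here the uniform $W^{1,4}$ bound is decisive, as it furnishes the equi-integrability of $(u^\varepsilon_x)^2$ that, combined with an a.e.-convergence (compensated-compactness/Young-measure) argument, upgrades weak to strong $L^2_{\mathrm{loc}}$ convergence of $u^\varepsilon_x$ and lets every nonlinear term pass to the limit, producing a global weak solution $u\in C([0,\infty);H^1)\cap L^\infty_tW^{1,4}$. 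Conservation of $E$ is not automatic from this construction, which only gives $E(u(t))\le E(u_0)$ by weak lower semicontinuity; to obtain equality I would argue directly on the weak solution, mollifying it and computing $\tfrac{d}{dt}E(\varphi_\delta\ast u)$ from \eqref{novikov weak}, then showing the commutator between the mollifier and the transport and nonlocal nonlinearities tends to zero in $L^1_t$ as $\delta\to0$ --- precisely the mechanism isolated in Lemma \ref{lem cons weak}, where the $W^{1,4}$ regularity is what forces the commutators to vanish.

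The main obstacle is uniqueness, since two weak solutions are only Lipschitz in $x$ and cannot be compared by a naive $H^1$ energy estimate. I would instead pass to Lagrangian coordinates, letting $X(t,\xi)$ solve $\dot X=u^2(t,X)$ with $X(0,\xi)=\xi$ and tracking $U(t,\xi)=u(t,X)$ together with the Lagrangian gradient and a cumulative-energy variable. As in the reduction to the dynamical system \eqref{nonlinear-F} carried out later in this paper for the perturbation, the transport term trivializes along $X$, while $\Q[u]$ becomes an integral over the Lagrangian label that is a locally Lipschitz function of the state in the energy norm, because the kernels $\varphi,\varphi_x$ are bounded and the energy is finite. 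The system then closes to an ODE in a Banach space with locally Lipschitz right-hand side, so Picard--Lindel\"of gives a unique Lagrangian trajectory, and mapping back yields uniqueness of the weak solution (and, in fact, the Eulerian existence as well, bypassing the gradient-compactness issue). The crux is verifying that the nonlocal vector field is Lipschitz uniformly on energy balls, i.e. controlling the dependence of $\Q[u]$ on the solution through $\|u_x\|_{L^4}$; this is exactly where the choice of $H^1\cap W^{1,4}$ is essential and where the bulk of the technical effort lies.
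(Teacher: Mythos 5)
This theorem is not proved in the paper at all: it is imported verbatim from \cite{Chen18IUMJ}, whose construction builds global \emph{conservative} solutions by passing to characteristic (Lagrangian) coordinates and solving a de-singularized semilinear system --- close in spirit to the second half of your sketch, not to the viscosity half. Measured against that, your proposal has two genuine gaps. First, the vanishing-viscosity half is essentially the route of \cite{L13}, and the introduction of the present paper records exactly why it falls short: without sign conditions the viscous approximation is only known to give one-sided gradient bounds, and it loses the conservation of $E$ and hence uniqueness. You flag the uniform control of $F(u^\varepsilon)$ as a ``crux'' but do not resolve it, and your fallback --- recovering $E(u(t))=E(u_0)$ on the weak solution by the mollifier--commutator mechanism of Lemma \ref{lem cons weak} --- does not transfer to $W^{1,4}$ regularity: that lemma is proved for solutions in $H^1\cap W^{1,\infty}$ and uses $\|u_x\|_{L^\infty}$ in two essential places, namely the bound $\abs{\overline{u}_{xxx}}\lesssim \varepsilon^{-2}\|u_x\|_{L^\infty}$ in \eqref{u_xxx est} and the appeal to \cite[Lemma 3]{constantin2000global}, which requires $u$ Lipschitz. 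Consistent with this obstruction, \cite{Chen18IUMJ} obtains $F$ conserved only for almost every $t$ (see Remark \ref{rk choice}), so the commutator route cannot simply be ``the same mechanism at lower regularity.''

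Second, the uniqueness half is too quick at precisely the points where the cited proof does its work. Since $u(t,\cdot)\in H^1\cap W^{1,4}$ embeds only into $C^{0,3/4}$ and not into $W^{1,\infty}$, the field in $\dot X=u^2(t,X)$ is merely H\"older in $x$, so Picard--Lindel\"of does not apply to the characteristic ODE of a \emph{given} weak solution; this is in sharp contrast with the paper's own characteristics argument (Theorem \ref{theorem GWP}), which is deliberately run in $H^1\cap C^1_0\subset W^{1,\infty}$ so that the field is Lipschitz. Moreover, if you take the raw Lagrangian gradient $W=u_x\circ X$ as a state variable, its evolution is of Riccati type and blows up in finite time exactly at wave breaking, which does occur for the Novikov equation \cite{BUNovikov}; global solvability of the Lagrangian system requires the Bressan--Constantin-type change of dependent variables (e.g.\ an arctangent-type variable together with the cumulative energy density), which your sketch omits. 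Finally, even granting a locally Lipschitz semilinear system, its unique solvability gives uniqueness only within the class of solutions produced by that construction: to conclude uniqueness of an arbitrary Eulerian weak solution one must show that every energy-conservative weak solution lifts to a Lagrangian trajectory and then quotient out the relabeling symmetry --- this lifting step, in the style of the uniqueness-via-characteristics arguments for Camassa--Holm and of the metric framework in \cite{Chen18ARMA}, is the real content of the cited theorem and is absent from your proposal.
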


\begin{remark}\label{rk choice}
For the instability analysis, we need to work with the initial datum $u_0$
in the restrictive function space $H^1 \cap W^{1,\infty}$
without the sign condition on $m_0$, for which neither Theorem \ref{thm weak m}
nor Theorem \ref{thm weak u} is applicable. One of the reasons is that weak solutions in $H^1 \cap W^{1,\infty}$
enjoy (spatial) Lipschitz regularity which suits well for the standard
theory for solvability of differential equations along the characteristics. The other reason is due to the fact that
while $E(u)$ conserves for the weak solutions in $H^1 \cap W^{1,4}$, $F(u)$ is only conserved for almost every $t > 0$ \cite{Chen18IUMJ}.
Although no previous local well-posedness theory has been developed for the Cauchy problem (\ref{Cauchy nov})
in $H^1 \cap W^{1,\infty}$, we will obtain the local well-posedness from the method of characteristics
under the assumption that our solution in $H^1 \cap W^{1,\infty}$ consists of a single peakon
perturbed by a single-peaked piecewise $C^1$ function, see Theorem \ref{theorem GWP}.
\end{remark}

Next we state the regularity of the nonlocal terms in \eqref{Cauchy nov}.
A similar argument as in \cite[Lemma 5]{DmitryPreprint} combined with the estimates in \cite[Section 2]{Chen18IUMJ} leads to

\begin{lemma}\label{lem cont Q}
If $u \in H^1 \cap W^{1,\infty}$, then $\Q[u] \in C(\R)$. If $u \in H^1 \cap C^1_0$, then $\Q[u] \in C^1_0$.
\end{lemma}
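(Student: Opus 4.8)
The plan is to view $\Q[u]$ as a sum of two convolution operators acting on the cubic source terms, and to exploit the explicit kernel $\varphi(x)=e^{-|x|}$ together with the distributional identity $\varphi_{xx}=\varphi-2\delta_0$ from \eqref{green}. Set $g_1:=\tfrac32 u u_x^2+u^3$ and $g_2:=u_x^3$, so that $\Q[u]=\tfrac12\varphi_x\ast g_1+\tfrac14\varphi\ast g_2$. The first step is to record the mapping properties of $g_1$ and $g_2$: when $u\in H^1\cap W^{1,\infty}$ one has $g_1,g_2\in L^1\cap L^\infty$, since $u,u_x\in L^\infty$ gives the $L^\infty$ bound while $u,u_x\in L^2$ (so that $u^2,u_x^2\in L^1$) together with the $L^\infty$ bounds gives integrability. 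These are precisely the estimates carried out in \cite[Section 2]{Chen18IUMJ}.

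For the first assertion I would invoke the elementary fact that the convolution of an $L^1$ kernel with an $L^\infty$ function is bounded and uniformly continuous, which follows from the continuity of translation in $L^1$ applied to the kernel. Since $\varphi,\varphi_x\in L^1(\R)$ and $g_1,g_2\in L^\infty$, both $\varphi_x\ast g_1$ and $\varphi\ast g_2$ belong to $C_b(\R)$, and hence $\Q[u]\in C(\R)$.

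For the second assertion, assume $u\in H^1\cap C^1_0$. Differentiating and using $\varphi_{xx}=\varphi-2\delta_0$ gives
\[
(\Q[u])_x=\tfrac12\,\varphi\ast g_1-g_1+\tfrac14\,\varphi_x\ast g_2.
\]
The two convolution terms again lie in $C_b(\R)$ by the argument above. The local term $-g_1$ is bounded and, since $u$ and $u_x$ are continuous on $\R^+$ and on $\R^-$, it is continuous on each half-line; it may jump across $x=0$ where $u_x$ jumps, but this is permitted in $C^1_0$. Consequently $(\Q[u])_x$ is continuous on $\R^\pm$ and bounded, and combined with $\Q[u]\in C(\R)\cap L^\infty$ this yields $\Q[u]\in C^1_0$.

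The main obstacle is to justify the differentiation of $\varphi_x\ast g_1$, since $\varphi_x$ carries a jump at the origin and $g_1$ is only piecewise $C^1$. I would make this rigorous by splitting the integral at the kernel singularity, writing
\[
(\varphi_x\ast g_1)(x)=-e^{-x}\!\int_{-\infty}^{x}e^{y}g_1(y)\,dy+e^{x}\!\int_{x}^{\infty}e^{-y}g_1(y)\,dy,
\]
and differentiating each piece via the fundamental theorem of calculus at any $x\neq0$ where $g_1$ is continuous; the two boundary contributions coming from the jump of $\varphi_x$ combine into the local term $-2g_1(x)$, reproducing the distributional identity $\varphi_{xx}\ast g_1=\varphi\ast g_1-2g_1$ in a pointwise sense. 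This is the step that requires genuine care; the remainder is bookkeeping with the $L^1$--$L^\infty$ convolution estimates, exactly in the spirit of \cite[Lemma 5]{DmitryPreprint}.
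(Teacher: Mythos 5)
Your proposal is correct and follows essentially the same route the paper intends: the paper proves this lemma only by appeal to \cite[Lemma 5]{DmitryPreprint} and the estimates of \cite[Section 2]{Chen18IUMJ}, and your argument --- $g_1,g_2\in L^1\cap L^\infty$, uniform continuity of $L^1\ast L^\infty$ convolutions for the first claim, and the pointwise justification of $\varphi_{xx}\ast g_1=\varphi\ast g_1-2g_1$ via splitting the kernel at its singularity for the second --- is precisely the content of that cited argument. In particular your careful one-sided differentiation across $x=0$, which correctly allows $(\Q[u])_x$ to inherit a jump from $g_1$ while remaining in $C(\R^\pm)\cap L^\infty$, fills in the only step the citation leaves implicit.
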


Following \cite{DmitryPreprint}, the function class we use here is $C^1_0$ which is suited for capturing the single peak in the
peaked solution $u$. Similarly to \cite[Lemma 6]{DmitryPreprint}, the location of the peak moves with its local characteristic speed.

\begin{lemma}\label{lem peak location}
Assume that there exists the unique weak solution $u \in C([0,T), H^1\cap W^{1,\infty})$
to the Cauchy problem \eqref{Cauchy nov} for some $T > 0$ with
a jump of $u_x$ across $x = a(t)$ such that $u(t, \cdot + a(t)) \in C^1_0$, $t \in [0,T)$. Then, we have $a \in C^1(0,T)$ and
$a'(t) = u^2(t,a(t))$, for $t \in [0,T)$.
\end{lemma}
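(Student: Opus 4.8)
The plan is to read off the speed $a'(t)$ from the corner of $u$ at the peak, exploiting that $u$ is continuous across $x=a(t)$ while $u_x$ jumps there. Throughout I write $[f]^+_- := \lim_{x\to a(t)^+} f(t,x) - \lim_{x\to a(t)^-} f(t,x)$ for the jump across the peak, and recall that by hypothesis $[u_x]^+_- \neq 0$ for each $t\in[0,T)$. Since $u(t,\cdot)\in W^{1,\infty}$ is $C^1$ on either side of $a(t)$ and $\Q[u](t,\cdot)\in C^1_0\subset C(\R)$ by Lemma \ref{lem cont Q}, the pointwise identity $u_t = -u^2 u_x - \Q[u]$, valid for $x\neq a(t)$, possesses one-sided limits at the peak. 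As $u$ and $\Q[u]$ are continuous there while only $u_x$ jumps, subtracting the two one-sided limits gives the \emph{dynamic} jump relation
\[
[u_t]^+_- = -\,u^2(t,a(t))\,[u_x]^+_- ,
\]
which already encodes the transport speed $u^2$ predicted by the momentum form \eqref{novikov m}, where the singularity of $m=u-u_{xx}$ sits at the peak and is advected with velocity $u^2$.

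Next I would extract a second, \emph{kinematic} relation from the continuity of the peak value $t\mapsto u(t,a(t))$. Assuming $a$ is differentiable at $t$, I would differentiate $u(t,a(t))$ by the one-sided chain rule from the right and from the left; since both computations must return the single number $\tfrac{d}{dt}u(t,a(t))$, equating them yields
\[
[u_t]^+_- + a'(t)\,[u_x]^+_- = 0 .
\]
Combining this with the dynamic relation and dividing by the nonzero jump $[u_x]^+_-$ produces exactly $a'(t) = u^2(t,a(t))$.

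To legitimize this differentiation and to upgrade to $a\in C^1$, I would proceed in the following order: (i) show $a$ is continuous, which follows from $u\in C([0,T),W^{1,\infty})$ since the location of the corner depends continuously on $u$ in the $W^{1,\infty}$ topology; (ii) show $a$ is Lipschitz by finite speed of propagation, the characteristic field $\dot q = u^2(t,q)$ being bounded by $\|u(t,\cdot)\|_{L^\infty}^2$ and Lipschitz in $x$, so that the singularity cannot travel faster than the characteristics and $|a'|\le \|u\|_{L^\infty}^2$ wherever it exists; (iii) invoke Rademacher's theorem to obtain differentiability of $a$ for a.e.\ $t$ and apply the two relations above to get $a'(t)=u^2(t,a(t))$ for a.e.\ $t$; and (iv) observe that the right-hand side $t\mapsto u^2(t,a(t))$ is continuous, by joint continuity of $u$ together with continuity of $a$, so the a.e.\ identity admits a continuous representative and hence $a\in C^1(0,T)$ with $a'(t)=u^2(t,a(t))$ for every $t\in[0,T)$.

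The main obstacle is step (iii): justifying the two-sided chain-rule differentiation of $u(t,a(t))$ when $u$ is only Lipschitz in $x$ and the corner is moving, i.e.\ controlling the one-sided limits of $u_t$ as the evaluation point crosses the peak. The delicate point is that the sign of $a(t+h)-a(t)$ selects which one-sided value of $u_t$ (and of $u_x$) enters the difference quotient, and matching the left- and right-handed increments is precisely what forces $[u_t]^+_- + a'(t)\,[u_x]^+_- = 0$. This is the analogue of \cite[Lemma 6]{DmitryPreprint} for the Camassa--Holm equation, and I expect the same bookkeeping of one-sided limits to carry over once the Lipschitz bound from (ii) is in hand.
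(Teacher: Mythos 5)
The paper offers no written proof of this lemma at all---it simply defers to the Camassa--Holm analogue \cite[Lemma 6]{DmitryPreprint}---and the mechanism behind that cited result is exactly your argument: the equation together with the continuity of $\Q[u]$ (Lemma \ref{lem cont Q}) forces the dynamic jump relation $[u_t]^+_- = -u^2(t,a(t))\,[u_x]^+_-$, which, matched against the kinematic relation coming from continuity of $u$ across the moving corner and divided by the nonzero jump $[u_x]^+_-$, yields $a'(t)=u^2(t,a(t))$; so your proposal is correct and essentially coincides with the paper's (cited) proof, including the correct bookkeeping of which one-sided values the difference quotients sample. The only observation worth adding is that your step (ii), once made precise via propagation of one-sided $C^1$ regularity along the unique characteristics of $\dot q = u^2(t,q)$ (the field being Lipschitz in $x$ and continuous in $t$), already traps the corner on the single characteristic through the initial peak and hence gives $a\in C^1$ with $a'(t)=u^2(t,a(t))$ outright---this is what the paper means by ``the peak moves with its local characteristic speed''---so the Rademacher bootstrap in your steps (iii)--(iv) is sound but dispensable.
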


Assuming local well-posedness of the Cauchy problem \eqref{Cauchy nov} for $u_0 \in H^1\cap W^{1,\infty}$, we
shall extend the result of Theorem A to prove the orbital stability of the single peakon $\varphi$ in $H^1$, 
see Theorem \ref{thm stab weak}.

\subsection{$H^1$-orbital stability of peakons for single-peaked perturbations}\label{subsec orbital}

Let us first recall the following characterization of $W^{1,p}$ functions in terms of the integrability of their spatial shifts.
\begin{theorem}[\cite{ziemer2012weakly} Theorem 2.1.6]\label{thm sobolev}
Let $1\le p < \infty$. Then $u\in W^{1,p}(\R^d)$ if and only if $u\in L^p(\R^d)$ and the quantity
\begin{equation*}
\int_{\R^d} \left|{u(x+h) - u(x) \over h} \right|^p \,dx %= |h|^{-1}\|u(x+h) - u(x)\|_{L^p}
\end{equation*}
remains bounded for all $h\in \R^d$.
\end{theorem}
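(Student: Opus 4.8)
The statement is a biconditional, so I would handle the two implications separately; the genuine content lies in the sufficiency (the ``if'') direction, while necessity is an elementary consequence of the fundamental theorem of calculus.

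\emph{Necessity.} First I would establish the easy direction on the dense class $C_c^\infty(\R^d)$, where for $h \in \R^d$ one has the exact identity
\[
u(x+h) - u(x) = \int_0^1 \nabla u(x+th)\cdot h \, dt.
\]
Taking the $L^p$ norm in $x$ and applying Minkowski's integral inequality gives $\|u(\cdot+h)-u(\cdot)\|_{L^p} \le |h|\,\|\nabla u\|_{L^p}$, so that the difference quotients are bounded by $\|\nabla u\|_{L^p}$ uniformly in $h$. I would then upgrade this to all of $W^{1,p}(\R^d)$ by density of $C_c^\infty$ in $W^{1,p}$ (valid for $1 \le p < \infty$), since both sides of the inequality are continuous with respect to $W^{1,p}$ convergence.

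\emph{Sufficiency.} For the substantive direction, assume $u \in L^p$ with $\|u(\cdot+h)-u(\cdot)\|_{L^p} \le M|h|$ for every $h$. The goal is to produce weak partial derivatives lying in $L^p$. Fix a coordinate direction $e_i$ and set $D_i^t u(x) := t^{-1}\bigl(u(x+te_i)-u(x)\bigr)$, a family bounded by $M$ in $L^p$ uniformly in $t$. The structural tool is the discrete integration-by-parts identity
\[
\int_{\R^d} (D_i^t u)\,\phi \, dx = -\int_{\R^d} u \,(D_i^{-t}\phi)\, dx, \qquad \phi \in C_c^\infty(\R^d),
\]
together with the fact that $D_i^{-t}\phi \to \partial_i \phi$ in $L^{p'}$ as $t \to 0$. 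The decisive step is a weak-compactness argument: since $1 < p < \infty$ makes $L^p(\R^d)$ reflexive, the bounded family $\{D_i^t u\}$ is weakly sequentially precompact, so along some $t_k \to 0$ one has $D_i^{t_k} u \rightharpoonup g_i$ weakly in $L^p$ with $\|g_i\|_{L^p} \le M$. Passing to the limit in the identity above yields $\int g_i \phi \, dx = -\int u\, \partial_i \phi \, dx$ for all test functions $\phi$, which is precisely the assertion that $g_i = \partial_i u$ in the weak sense and belongs to $L^p$. Repeating over all $i$ gives $u \in W^{1,p}$.

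\emph{The main obstacle.} The step I expect to be the crux is exactly this passage to the limit by reflexivity, because it genuinely requires $1 < p < \infty$. At the endpoint $p = 1$ the family $\{D_i^t u\}$ is only bounded in $L^1$, whose bounded sets are weak-$*$ precompact in the space of Radon measures rather than in $L^1$ itself; the limit $g_i$ is then only a measure, and one recovers membership in $BV$ rather than in $W^{1,1}$ (the indicator of a cube being the standard counterexample to the converse at $p=1$). Consequently, for the sufficiency direction one should either restrict to $1 < p < \infty$ or adjoin the hypothesis that the limiting functionals are absolutely continuous, whereas the necessity direction remains valid across the full range $1 \le p < \infty$.
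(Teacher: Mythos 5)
The paper offers no proof of this statement at all: it is imported verbatim from Ziemer (Theorem 2.1.6) and used as a black box, so there is no internal argument to compare yours against. On its own merits, your proposal is the standard proof of the cited result and is correct where it claims to be: for necessity, the fundamental theorem of calculus plus Minkowski's integral inequality on $C_c^\infty$, giving $\|u(\cdot+h)-u\|_{L^p} \le |h|\,\|\nabla u\|_{L^p}$, upgraded by density (both sides are continuous under $W^{1,p}$ convergence); for sufficiency, the discrete integration-by-parts identity, the convergence $D_i^{-t}\phi \to \partial_i\phi$ in $L^{p'}$, and weak sequential compactness of bounded sets in the reflexive space $L^p$ to extract $g_i = \partial_i u \in L^p$.

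Your caveat about the endpoint is also accurate and worth making explicit: as literally quoted, with the biconditional asserted for all $1 \le p < \infty$, the ``if'' direction fails at $p = 1$ (the indicator of a cube satisfies $\|u(\cdot+h)-u\|_{L^1} = O(|h|)$ yet lies in $BV \setminus W^{1,1}$), because bounded families in $L^1$ are only weak-$*$ precompact in the space of Radon measures; the converse direction of the difference-quotient characterization genuinely requires $1 < p < \infty$. This slight overstatement in the quoted theorem is harmless for the paper: Theorem \ref{thm sobolev} is invoked exactly once, in the proof of Lemma \ref{lem cons weak} at estimate \eqref{r_n est}, and only in the necessity direction (which holds for all $1 \le p < \infty$) with exponents $n = 2, 3$, where your quantitative bound $\|u(\cdot+h)-u\|_{L^p} \le |h|\,\|u\|_{W^{1,p}}$ is precisely what is used. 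A cosmetic point: the displayed quantity in the statement divides by the vector $h$; it should read $|h|$.
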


We show now that the two functionals $E(u)$ and $F(u)$ are still conserved for the same weak solutions as
those assumed in Lemma \ref{lem peak location}.

\begin{lemma}\label{lem cons weak}
Assume that there exists the unique weak solution $u \in C([0,T), H^1\cap W^{1,\infty})$
to the Cauchy problem \eqref{Cauchy nov} for some $T > 0$.
Then, the values of $E(u)$ and $F(u)$ are conserved.
\end{lemma}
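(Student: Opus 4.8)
The plan is to establish the two identities $\tfrac{d}{dt}E(u)=0$ and $\tfrac{d}{dt}F(u)=0$ and then promote them to genuine conservation laws using the continuity of $E$ and $F$ on $H^1\cap W^{1,\infty}$ together with $u\in C([0,T),H^1\cap W^{1,\infty})$. The difficulty is that the formal energy computations require integrations by parts that place derivatives onto $u_{xx}$, which is only a distribution for a $W^{1,\infty}$ solution. To legitimize them I would regularize: fix a standard mollifier $J_\varepsilon$, set $u^\varepsilon:=J_\varepsilon\ast u$, and record that $u^\varepsilon(t,\cdot)$ is smooth in $x$ with $u^\varepsilon\to u$ in $H^1$, $u^\varepsilon_x\to u_x$ in $L^2$, and $\|u^\varepsilon\|_{W^{1,\infty}}\le\|u\|_{W^{1,\infty}}$ uniformly in $\varepsilon$. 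Convolving (\ref{Cauchy nov}) with $J_\varepsilon$ gives $u^\varepsilon_t=-J_\varepsilon\ast(u^2u_x)-J_\varepsilon\ast\Q[u]$, where every term is now a smooth decaying function, so all integrations by parts below are justified at fixed $\varepsilon$.

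For $E$ I would compute
\[
\tfrac12\tfrac{d}{dt}E(u^\varepsilon)=\int_\R u^\varepsilon u^\varepsilon_t\,dx+\int_\R u^\varepsilon_x u^\varepsilon_{xt}\,dx,
\]
splitting each factor into its local and nonlocal parts and integrating by parts. The nonlocal contribution converges directly, since $\Q[u]\in H^1$ and $J_\varepsilon\ast\Q[u]\to\Q[u]$ in $H^1$, to $-\int_\R(u-u_{xx})\Q[u]\,dx=\int_\R uu_x^3\,dx$ after using $(1-\p_x^2)\Q[u]=\p_x\LC\tfrac32 uu_x^2+u^3\RC+\tfrac12u_x^3$. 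The local contribution is the delicate one: after one integration by parts it contains $-\int_\R u^\varepsilon_x\,J_\varepsilon\ast(u^2u_{xx})\,dx$, in which $u_{xx}$ reappears. Here I would invoke the commutator $J_\varepsilon\ast(u^2u_{xx})-u^2\,\p_x(J_\varepsilon\ast u_x)$, which tends to $0$ in $L^2$ by the DiPerna--Lions/Friedrichs commutator lemma (whose proof rests precisely on the difference-quotient characterization of Theorem \ref{thm sobolev}), using $u^2\in W^{1,\infty}$ and $u_x\in L^2\cap L^\infty$. Replacing $J_\varepsilon\ast(u^2u_{xx})$ by $u^2 u^\varepsilon_{xx}$ up to this vanishing remainder, the surviving integrand involves only $u$ and $u_x$ and converges to $-\int_\R uu_x^3\,dx$; together with the nonlocal part this yields $\tfrac{d}{dt}E(u^\varepsilon)\to0$, matching the formal identity $\int_\R(u-u_{xx})\LC u^2u_x+\Q[u]\RC\,dx=0$.

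For $F$ I would run the identical scheme, differentiating $F(u^\varepsilon)$ in time and substituting $u^\varepsilon_t$. The formal conservation (which for smooth solutions reflects the bi-Hamiltonian structure) again reduces, after repeated integration by parts, to a cancellation among integrals whose integrands are polynomials in $u$ and $u_x$ alone. The new feature is that the flux now carries cubic gradient terms, so several more commutators appear, of the schematic form $J_\varepsilon\ast(a\,u_{xx})-a\,\p_x(J_\varepsilon\ast u_x)$ with $a$ a polynomial in $u,u_x$ lying in $W^{1,\infty}$; each is controlled by the same commutator lemma, and the uniform Lipschitz bound $\|u_x\|_{L^\infty}$ guarantees that every product ($u^2u_x^2$, $uu_x^3$, $u_x^4$, and their mollified counterparts) is integrable and converges in $L^2$. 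This is exactly where the restriction to $W^{1,\infty}$ rather than $W^{1,4}$ is used, and it explains why $F$ is conserved for all $t$ here rather than merely almost everywhere as in Remark \ref{rk choice}.

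The main obstacle is therefore the verification that all of these commutator remainders vanish as $\varepsilon\to0$, the case of $F$ being heaviest because of its quartic gradient structure. Once this is in hand, I would integrate in $t$, exploit a uniform-in-$\varepsilon$ bound on $\tfrac{d}{dt}E(u^\varepsilon)$ and $\tfrac{d}{dt}F(u^\varepsilon)$ (in terms of $\|u\|_{H^1}$ and $\|u\|_{W^{1,\infty}}$, which are bounded on compact time intervals) to pass the limit under the integral by dominated convergence, and conclude $E(u(t))=E(u(0))$ and $F(u(t))=F(u(0))$ for all $t\in[0,T)$.
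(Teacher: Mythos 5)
Your proposal is correct in outline and follows the same regularize-and-commute skeleton as the paper: mollify the equation, differentiate the regularized functionals in time, and show the commutator remainders vanish as $\varepsilon \to 0$. The implementation of the key estimates, however, is genuinely different. The paper mollifies both the equation and its $x$-derivative, collects the remainders $\Rr_1, \Rr_2$, and kills them with three tools: strong $L^p$ convergence of mollifications; the lemma of Constantin--Molinet (\cite{constantin2000global}, Lemma 3) that $\overline{f\mu} - \overline{f}\,\overline{\mu} \to 0$ in $L^1$ for $f$ uniformly continuous and bounded and $\mu$ a finite measure (applied with $f = u$ Lipschitz and $\mu = u_x^2\,dx$); and a Banach--Steinhaus argument reducing the worst term $\int_\R \ub_{xxx}\LC \ub^3 - \overline{u^3}\RC dx$ to a uniform $L^1$ bound, obtained from the quantitative estimates $|\ub_{xxx}| \lesssim \varepsilon^{-2}\|u_x\|_{L^\infty}$ and $\|\overline{r}_n(u)\|_{L^1} \le \varepsilon^n \|u\|_{W^{1,n}}^n$ via the difference-quotient characterization (Theorem \ref{thm sobolev}). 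You instead never form third derivatives of the mollification: you isolate the single dangerous term as the Friedrichs/DiPerna--Lions commutator $J_\varepsilon \ast (u^2 u_{xx}) - u^2 \partial_x(J_\varepsilon \ast u_x)$ with Lipschitz coefficient $u^2$, which is arguably cleaner and more modular; as you note, both routes ultimately rest on the same difference-quotient mechanism, so neither is more general than the other. Your limiting identities for $E$ (nonlocal part $\to \int_\R u u_x^3\,dx$, local part $\to -\int_\R u u_x^3\,dx$) check out.

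One caveat you should repair before this counts as a complete argument: your schematic claim for $F$, that the commutators have the form $J_\varepsilon\ast(a\,u_{xx}) - a\,\partial_x(J_\varepsilon\ast u_x)$ with $a$ ``a polynomial in $u, u_x$ lying in $W^{1,\infty}$,'' is false as stated. If $a$ genuinely depends on $u_x$, then $a_x$ involves $u_{xx}$, which for $u \in W^{1,\infty}$ is only a distribution (for the peakon itself, $u_{xx} = u - 2\delta_0$ is a measure); such an $a$ is bounded but not Lipschitz, and the commutator lemma fails for merely bounded coefficients. Your proof survives because no such commutator actually arises: the only place $u_{xx}$ enters the mollified equations is through $J_\varepsilon \ast (u^2 u_{xx})$ coming from $\partial_x(u^2 u_x)$, since the nonlocal terms contain no second derivatives after using $\partial_x^2 (1-\partial_x^2)^{-1} = (1-\partial_x^2)^{-1} - 1$. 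Thus the coefficient is always $u^2 \in W^{1,\infty}$, and the $u_x$-dependent factors such as $(u^\varepsilon_x)^3$ and $(u^\varepsilon)^2 u^\varepsilon_x$ appear only as test functions, uniformly bounded in $L^2 \cap L^\infty$, where your hypotheses suffice. You should say this explicitly, and also record (as the intermediate quintic terms such as $\int_\R u u_x^5\, dx$ show) that the final cancellation for $F$ happens among the limit integrals as they stand, without further integration by parts that would reintroduce $u_{xx}$.
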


\begin{proof}
Rewrite the convolution form \eqref{novikov weak} of the Novikov equation as follows:
\begin{equation}\label{weak2}
u_t + u^2 u_x + \partial_x P_1(u, u_x) + P_2(u_x) = 0,
\end{equation}
where
\[
P_1(u, u_x) := {1\over2} \varphi \ast \left( {3\over2} uu_x^2 + u^3 \right), \quad P_2(u_x) := {1\over 4} \varphi \ast (u_x^3).
\]
Differentiating \eqref{weak2} in $x$ and using that $(1 - \partial_x^2) \varphi = 2\delta$ we obtain
\begin{equation}\label{weak2_x}
u_{xt} + (u^2u_x)_x - \left( {3\over2} uu_x^2 + u^3 \right) + P_1(u,u_x) + \partial_x P_2(u_x) = 0.
\end{equation}
For analysis of conservation laws, we will regularize the evolution equations \eqref{weak2} and \eqref{weak2_x}.
Let $\varepsilon>0$ and define
\[
\ub(x) := \eta_\varepsilon \ast u(x),
\]
where $\eta_\varepsilon(x) := {1\over \varepsilon} \eta\LC {x\over \varepsilon} \RC$ and $\eta \ge 0$ is a smooth even function compactly supported in a ball of radius 1, and with integral equal to 1.

Applying the mollifier $\eta_\varepsilon$ to \eqref{weak2} and using the cummutative and associative properties of the convolution, we obtain
\begin{equation}\label{reg weak}
\ub_t + \ub^2 \ux + \partial_x P_1(\ub, \ux) + P_2(\ux) = \Rr_1,
\end{equation}
where
\[
\Rr_1 := \ub^2 \ux - \overline{u^2u_x} + \partial_x P_1(\ub, \ux) - \partial_x \overline{P_1(u, u_x)} + P_2(\ux) - \overline{P_2(u_x)}.
\]
Similarly, from \eqref{weak2_x} we have
\begin{equation}\label{reg weak_x}
\ux_t + (\ub^2 \ux)_x - \LC {3\over2} \ub \ \ux^2 + \ub^3 \RC + P_1(\ub, \ux) + \partial_x P_2(\ux) = \Rr_2,
\end{equation}
where
\begin{align*}
\Rr_2 := \ & (\ub^2 \ux)_x - (\overline{u^2u_x})_x - \LC {3\over2} \ub\ \ux^2 + \ub^3 \RC + \overline{{3\over2} uu_x^2 + u^3} \\
& + P_1(\ub, \ux) - \overline{P_1(u, u_x)} + \partial_x P_2(\ux) - \partial_x \overline{P_2(u_x)}.
\end{align*}
We are now able to verify conservation of $E(u)$ and $F(u)$.

\medskip

 {\bf Conservation of $E(u)$:} Following \cite[Section 2]{Chen18IUMJ}, multiplying \eqref{reg weak} by $\ub$ and \eqref{reg weak_x} by $\ux$ we obtain a regularized local conservation law:
\begin{equation}\label{reg energy 1}
\partial_t \LC {\ub^2 + \ux^2 \over 2} \RC + \partial_x \LC{\ub^2 \ux^2 \over2} + \ub P_1(\ub,\ux) + \ub \partial_x P_2(\ux)  \RC = \ub \Rr_1 + \ux \Rr_2.
\end{equation}
Integration over $\R$ then gives
\begin{equation}\label{reg E}
\begin{split}
{1\over2}{d\over dt} E(\ub) & = \int_\R \LCB{1\over 3} \ub \LC \ub^3 - \overline{u^3} \RC_x + {1\over 3} \ux \LC \ub^3 - \overline{u^3} \RC_{xx} + {3\over 2} \ux \LC \ub\ \ux^2 - \overline{uu_x^2} \RC \right. \\
& \quad \quad \left. +\  \ux \LC \ub^3 - \overline{u^3} \RC + {1\over2} \ub \LC \ux^3 - \overline{u_x^3} \RC \RCB \,dx
\end{split}
\end{equation}

Note that $u \in H^1 \cap W^{1,\infty}$, and hence
$u, u_x \in L^p$ for any $2\le p \le \infty$. The properties of smooth approximation imply that
\begin{equation}\label{strong conv}
\|\ub - u\|_{L^p(\R)} \to 0, \quad \|\ux - u_x\|_{L^p(\R)} \to 0, \quad \text{as }\ \ \varepsilon \to 0.
\end{equation}
This way we know that the first, fourth, and fifth terms in the right-hand side of \eqref{reg E} all converge to zero as $\varepsilon \to 0$.

For the third term, note that we can write
\begin{align*}
\ub\ \ux^2 - \overline{uu_x^2} & = \ub \LC \ux^2 - \uxs \RC + \LC \ub\ \uxs - \overline{uu_x^2} \RC \\
& = \ub \LC \ux^2 - u_x^2 \RC + \ub \LC u_x^2 - \uxs \RC + \LC \ub\ \uxs - \overline{uu_x^2} \RC.
\end{align*}
The first two terms of the above can be treated using \eqref{strong conv}. For the last term, we can recall \cite[Lemma 3]{constantin2000global},
which states that if $f$ is uniformly continuous and bounded, and $\mu \in \mathcal{M}(\R)$, then
$\overline{f\mu} - \overline{f} \overline{\mu} \to 0$ in $L^1$.
Since $u$ is Lipschitz and $u_x^2 \in L^1$, we have that $\ub\ \uxs - \overline{uu_x^2} \to 0$ in $L^1$.
Therefore, the third term in the right-hand side of \eqref{reg E} converges to zero as $\varepsilon \to 0$.

Finally we look to show that
\[
\int_\R \ux \LC \ub^3 - \overline{u^3} \RC_{xx} \,dx = \int_\R \ub_{xxx} \LC \ub^3 - \overline{u^3} \RC \,dx \ \to \ 0 \quad \text{as }\ \varepsilon \to 0.
\]
Since the above obviously holds for smooth functions, one can use the Banach--Steinhaus theorem to observe that it is enough to show that $\LN \ub_{xxx} ( \ub^3 - \overline{u^3} ) \RN_{L^1}$ is uniformly bounded.

To this end, note that
\begin{equation}\label{u_xxx est}
\begin{split}
\LV \ub_{xxx}(x) \RV & = {1\over \varepsilon^4} \LV \int_{-\varepsilon}^{\varepsilon} u(x - y) \eta'''\LC {y \over \varepsilon} \RC \,dy \RV = {1\over \varepsilon^4} \LV \int_{-\varepsilon}^{\varepsilon} \LC u(x - y) - u(x) \RC \eta'''\LC {y \over \varepsilon} \RC \,dy \RV \\
& \le {1\over \varepsilon^4} \int_{-\varepsilon}^{\varepsilon} { |u(x - y) - u(x)| \over |y| } \LV \eta'''\LC {y \over \varepsilon} \RC \RV |y| \,dy \lesssim {1\over \varepsilon^2} \|u_x\|_{L^\infty}.
\end{split}
\end{equation}
It is also straightforward to check that
\begin{equation*}
\overline{u^3} - \ub^3 = \overline{r}_3(u) + (u - \ub)^3 + 3u \overline{r}_2(u) - 3u (u - \ub)^2,
\end{equation*}
where
\begin{equation*}
\overline{r}_n(u) := \int_{-\varepsilon}^{\varepsilon} \LC u(x - y) - u(x) \RC^n \eta_\varepsilon(y) \,dy
\end{equation*}
This way
\begin{equation*}
\begin{split}
\LV \overline{r}_n(u) (x) \RV & \le \int_{-\varepsilon}^{\varepsilon} \LC {|u(x - y) - u(x)| \over |y| } \RC^n \eta_\varepsilon(y) |y|^n \,dy \\
& \le \varepsilon^n \int_{-\varepsilon}^{\varepsilon} \LC {|u(x - y) - u(x)| \over |y| } \RC^n \eta_\varepsilon(y)  \,dy.
\end{split}
\end{equation*}
Moreover it follows from H\"older's inequality that
\begin{align*}
|\ub - u|^n & = \LV \int_{-\varepsilon}^{\varepsilon} \LC u(x - y) - u(x) \RC \eta_\varepsilon(y) \,dy \RV^n \\
& \le \LC \int_{-\varepsilon}^{\varepsilon} \eta_\varepsilon(y) \,dy \RC^{n-1} \LC \int_{-\varepsilon}^{\varepsilon} \LV u(x - y) - u(x) \RV^n \eta_\varepsilon(y) \,dy \RC \\
& = \LV \overline{r}_n(u) (x) \RV.
\end{align*}
An application of Fubini Theorem together with Theorem \ref{thm sobolev} implies that
\begin{equation}\label{r_n est}
\begin{split}
\LN \overline{r}_n(u) \RN_{L^1} & \le \varepsilon^n \int_{\R} \int_{-\varepsilon}^{\varepsilon} \LC {|u(x - y) - u(x)| \over |y| } \RC^n \eta_\varepsilon(y)  \,dy dx \\
& \le \varepsilon^n \int_{-\varepsilon}^{\varepsilon} \LB \int_{\R} \LC {|u(x - y) - u(x)| \over |y| } \RC^n \,dx \RB  \eta_\varepsilon(y)  \,dy \\
& \le \varepsilon^n \|u\|_{W^{1,n}}^n.
\end{split}
\end{equation}
From \eqref{u_xxx est} and \eqref{r_n est} it follows that
\begin{equation*}
\LN \ub_{xxx} ( \ub^3 - \overline{u^3} ) \RN_{L^1} \lesssim \|u_x\|_{L^\infty} \LC \varepsilon \|u\|_{W^{1,3}}^3 + \|u\|_{H^1}^2 \RC.
\end{equation*}
Putting together the above estimates we obtain that
\[
{d\over dt} E(\ub) \to 0, \quad \text{as} \quad \varepsilon \to 0,
\]
which proves the conservation of $E(u)$.

\medskip

{\bf Conservation of $F(u)$:} Similarly as before, to get the conservation law for $F(\ub)$ we multiply \eqref{weak2} by $4\ub^3 + 2\ub\ \ux^2$, multiply \eqref{weak2_x} by $-{4\over3} \ux^3 + 2\ub^2 \ux$ and integrate over $\R$ we have
\begin{align*}
{d\over dt} F(\ub) = \int_\R \LB (4\ub^3 + 2\ub\ \ux^2) \mathcal{R}_1 + \LC 2\ub^2 \ux -{4\over3} \ux^3 \RC \mathcal{R}_2 \RB \,dx.
\end{align*}
The rest of the proof follows in a similar way.
\end{proof}

Since the proof of Theorem A in \cite{Chen19} only makes use of the continuity of the solution
and the conservation of $E$ and $F$, we can recast the same idea in our current regularity setting
to obtain the following result.

\begin{theorem}\label{thm stab weak}
For every $0 < \varepsilon \ll 1$, let $u_0\in H^1 \cap C^1_0$ satisfy
\begin{equation*}
\|u_0 - \varphi\|_{H^1} < \varepsilon^4.
\end{equation*}
Assume existence of the unique weak solution $u \in C([0,T), H^1\cap W^{1,\infty})$
to the Cauchy problem \eqref{Cauchy nov} with the initial datum $u_0$ and the maximal existence time $T > 0$
such that $u(t, \cdot + a(t)) \in C^1_0$, $t \in [0,T)$ for some $a \in C^1([0,T)$ with $a(0) = 0$.
The corresponding solution $u$ satisfies
\begin{equation*}
\sup_{t\in [0, T)} \|u(t,\placeholder) - \varphi(\placeholder - a(t))\|_{H^1} < 2\LC 4 + \|u_{0x}\|_{L^\infty}^{1/2} \RC \varepsilon.
\end{equation*}
%where $A$ can be chosen to be
%\begin{equation}\label{A}
%A = \|u_{0x}\|_{L^\infty}^{1/2} \|u_{0x}\|_{L^2}^{1/4}.
%\end{equation}
%$A > 0$ depends only on $\|u_{0x}\|_{L^\infty}$ and $\|u_0\|_{H^1}$.
\end{theorem}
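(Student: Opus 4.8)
The plan is to transplant the Constantin--Strauss-type Lyapunov argument behind Theorem A into the present low-regularity setting. As the remark preceding the statement indicates, that argument is essentially \emph{static} in time: at each fixed $t$ it uses only the conserved values of $E$ and $F$, together with the continuity of $u$ and a distinguished peak location. Since conservation of both $E$ and $F$ is now available for our weak solutions by Lemma \ref{lem cons weak}, and the continuous (indeed $C^1$) peak location $a(t)$ with $a(0)=0$ is part of the hypotheses (and governed by Lemma \ref{lem peak location}), I would re-run that argument, taking care only that each algebraic identity it invokes remains valid for $u(t,\cdot+a(t))\in C^1_0$, i.e.\ for Lipschitz profiles with a single corner at the peak.

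First I would record the two algebraic identities on which everything rests. Writing $M(t):=\max_x u(t,\cdot)=u(t,a(t))$ and using $(1-\partial_x^2)\varphi=2\delta_0$ from \eqref{green}, the $H^1$ inner product collapses to $\langle u,\varphi(\cdot-a)\rangle_{H^1}=2M$, so that
\[
\|u(t,\cdot)-\varphi(\cdot-a(t))\|_{H^1}^2=E(u)-4M+2=\big(E(u)-2M^2\big)+2(M-1)^2 .
\]
Introducing the peakon defect $g:=u-u_x$ on $\{x<a\}$ and $g:=u+u_x$ on $\{x>a\}$, which vanishes identically for the peakon, an integration by parts that is legitimate on $C^1_0$ because of the single corner gives $E(u)=2M^2+\int_\R g^2\,dx$ and, using $\varphi_x^2=\varphi^2$ from \eqref{squares} together with the boundary value $M$ at the corner,
\[
F(u)=\tfrac43 M^4+\int_\R\Big(\tfrac43\,u\,g^3-\tfrac13\,g^4\Big)\,dx .
\]
Both identities are exact for $u(t,\cdot+a(t))\in C^1_0$, and they reduce the whole problem to pinning $M(t)$ near the peakon height $M=1$, since then $\int g^2=E-2M^2$ and $2(M-1)^2$ are simultaneously small.

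Next I would feed in the conservation laws. By Lemma \ref{lem cons weak}, $E(u(t))=E(u_0)$ and $F(u(t))=F(u_0)$ on $[0,T)$, and $\|u_0-\varphi\|_{H^1}<\varepsilon^4$ forces $|E(u_0)-2|\lesssim\varepsilon^4$, while an elementary quartic estimate of $F(u_0)-F(\varphi)$ (the term $\int u_{0x}^4$ is the only one that sees the gradient) yields $|F(u_0)-\tfrac43|\lesssim\varepsilon^4(1+\|u_{0x}\|_{L^\infty})^2$; this accounts for the appearance of $\|u_{0x}\|_{L^\infty}$ in the final constant. The $E$-identity gives the clean one-sided bound $M(t)\le\sqrt{E(u_0)/2}=1+O(\varepsilon^4)$, and inserting the near-peakon expansion of $M$ into the $F$-identity expresses $D(t):=\|u(t,\cdot)-\varphi(\cdot-a(t))\|_{H^1}^2$ as $\tfrac34\big(F(u_0)-\tfrac43\big)$ plus the defect integral $B:=\int(\tfrac43 u g^3-\tfrac13 g^4)$ plus controlled $O(\varepsilon^4)$ and $O(D^2)$ errors.

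The hard part will be controlling $B$, since it is cubic and quartic in $g=u\mp u_x$ and hence a priori involves $\|u_x(t)\|_{L^\infty}$ --- precisely the quantity that is \emph{un}stable and cannot be bounded in time. The resolution I would use is a self-improving estimate: rearranging the $F$-identity and applying H\"older's inequality $\int|g|^3\le(\int g^2)^{1/2}(\int g^4)^{1/2}$ gives $\int g^4\lesssim\|u\|_{L^\infty}^2\int g^2+|4M^4-3F(u_0)|$, in which $\|u(t)\|_{L^\infty}\le\sqrt{E(u_0)/2}$ is controlled purely by the conserved energy, with no gradient bound entering. Consequently $|B|\lesssim D+|F(u_0)-\tfrac43|$ is higher order once $D$ is small, so the relation for $D(t)$ closes. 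Because this self-improvement only produces smallness of the quartic defect in the regime where $D$ is already small, I would finish with a continuity (open--closed) argument on $\{t\in[0,T):D(t)\le\delta_*\}$: this set is nonempty and closed by continuity of $u$ in $H^1$ and of $a(\cdot)$, and the estimate shows it is open, hence all of $[0,T)$. This propagates $D(t)\lesssim|E(u_0)-2|+|F(u_0)-\tfrac43|\lesssim\varepsilon^4(1+\|u_{0x}\|_{L^\infty})^2$, which for $0<\varepsilon\ll1$ sits comfortably below $\big(2(4+\|u_{0x}\|_{L^\infty}^{1/2})\varepsilon\big)^2$ and gives the claimed bound.
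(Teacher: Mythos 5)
Your overall frame is the right one, and it is what the paper intends: the paper's own ``proof'' of Theorem~\ref{thm stab weak} is a single sentence which, given conservation of $E$ and $F$ for these weak solutions (Lemma~\ref{lem cons weak} is the genuinely new content), invokes the static Lyapunov argument of \cite{Chen19} verbatim; your two identities $E(u)=2M^2+\int_\R g^2\,dx$ and $F(u)=\tfrac43 M^4+\int_\R\bigl(\tfrac43 ug^3-\tfrac13 g^4\bigr)\,dx$ are indeed the correct backbone of that argument and remain valid for $C^1_0$ profiles split at the peak. The genuine gap is in your treatment of the defect $B:=\int_\R\bigl(\tfrac43 ug^3-\tfrac13 g^4\bigr)\,dx$, which is \emph{not} higher order: since $\|g\|_{L^2}^2=E-2M^2\le D$, your own self-improving bound gives $\|g\|_{L^4}^4=O(D)$ and $\|u\|_{L^\infty}\|g\|_{L^2}\|g\|_{L^4}^2=O(D)$, so $|B|\le cD+O(\varepsilon^4)$ with an absolute constant $c$ that is not small, while the quantity you must extract from the $F$-identity, $\tfrac43|1-M^4|$, is itself comparable to $D=(E(u_0)-2)+4(1-M)$. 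The loop $\tfrac43|1-M^4|\le |F(u_0)-\tfrac43|+|B|\le cD+\cdots$ therefore does not close. One can see the failure in closed form: pointwise $\tfrac43 ug^3-\tfrac13 g^4=\tfrac13 g^2\,g(4u-g)$ and $g(4u-g)=4u^2-(g-2u)^2\le 4u^2$, whence
\[
F(u)\le \tfrac43 M^4+\tfrac43\|u\|_{L^\infty}^2\bigl(E-2M^2\bigr).
\]
If you now insert the energy bound $\|u\|_{L^\infty}^2\le E/2$, as your proposal does precisely in order to avoid any gradient bound, completing the square turns this into $\tfrac43\bigl(M^2-\tfrac{E}{2}\bigr)^2\ge F-\tfrac{E^2}{3}$, whose right-hand side is $O\bigl(\varepsilon^4(1+\|u_{0x}\|_{L^\infty}^2)\bigr)$ at the conserved values --- an inequality satisfied by \emph{every} $M$, hence vacuous. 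The coercivity lives entirely in the pointwise coupling $u^2\le M^2$ (valid once the split point is a point of maximum and $u\ge -M$ is secured inside the continuity argument); that choice keeps the opposite square, $\tfrac43\bigl(M^2-\tfrac{E}{2}\bigr)^2\le \tfrac{E^2}{3}-F$, which does pin $M^2$ near $1$. The two bounds differ by exactly $\tfrac83\bigl(M^2-\tfrac{E}{2}\bigr)^2$: your substitution $\|u\|_{L^\infty}\le\sqrt{E/2}$ discards exactly the coercive term.

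A telltale symptom confirms the miscount: your conclusion $D(t)\lesssim\varepsilon^4(1+\|u_{0x}\|_{L^\infty})^2$ would give $\|u-\varphi(\cdot-a)\|_{H^1}=O(\varepsilon^2)$, strictly better than the theorem. The correct argument only yields $(M^2-1)^2\lesssim\varepsilon^4(1+\|u_{0x}\|_{L^\infty}^2)$, hence $1-M\lesssim\varepsilon^2(1+\|u_{0x}\|_{L^\infty})$ and $D\lesssim\varepsilon^2(1+\|u_{0x}\|_{L^\infty})$; this square-root loss is exactly where the constant $2\bigl(4+\|u_{0x}\|_{L^\infty}^{1/2}\bigr)\varepsilon$ and the $\varepsilon^4$ threshold come from, and its absence in your bookkeeping traces back to treating $B$ as higher order. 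Your H\"older/Young estimate for $\int g^4$ is fine in itself (it is essentially the bound $\|v_x\|_{L^4}^4\le 3\bigl(2\|v\|_{H^1}^4-F(v)\bigr)$ that the paper uses later in Lemma~\ref{lem est PQ}), and your inputs --- the conservation laws from Lemma~\ref{lem cons weak}, the validity of the identities on $C^1_0$, the open--closed argument in $t$ --- are all sound; but the coercivity engine must be the $u^2\le M^2$ version from \cite{Chen19}, run inside your continuity argument to justify $-\min u\le\max u$, not the energy bound.
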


\begin{remark}
Because $u(t,\cdot + a(t)) \in C^1_0$ is $H^1$ close to $\varphi$ in Theorem \ref{thm stab weak},
it follows from continuous embedding of $H^1$ to $C^0$ and monotonicity of $\varphi$ with $\lim_{x \to 0^{\pm}} \varphi_x(x) = \mp 1$
that the location of the peak at $a(t)$ in Theorem \ref{thm stab weak} coincides with
the location of the maximum of $u$ at $\xi(t)$ in Theorem A.
\end{remark}

\subsection{Derivation of the evolution problem for perturbations to a single peakon}\label{subsec char nonlinear}

We shall construct a unique weak solution $u \in C([0,T), H^1\cap W^{1,\infty})$ to the Cauchy problem \eqref{Cauchy nov}
for some $T > 0$ with a single jump of $u_x$ across $x = a(t)$. We use the same decomposition \eqref{decomp}
and look for the modulation $a \in C^1(0,T)$ and the perturbation
$v \in C([0,T), H^1\cap W^{1,\infty})$ to the peakon $\varphi$.
If $v(t, \cdot) \in C^1_0$ for all $t\in [0,T)$, then the solution $u$ satisfies
$u(t,\cdot + a(t)) \in C^1_0$, $t \in [0,T)$ so that Lemma \ref{lem peak location} implies
that $a \in C^1(0,T)$ satisfies the following modulation equation:
\be\label{choice of s}
\dot a(t) = u^2 (t, a(t)) = (\varphi(0) + v(t,0))^2 = (1 + v(t,0))^2.
\ee
Note that the linear part of this modulation equation has already been used in the linearized
equation (\ref{dot-a-linear}). Thus, the problem of constructing the local
solution $u \in C([0,T), H^1\cap W^{1,\infty})$ is now replaced by the problem of constructing
the local solution $v \in C([0,T), H^1\cap W^{1,\infty})$ such that $v(t, \cdot) \in C^1_0$ for all $t\in [0,T)$.

Substituting (\ref{decomp}) and (\ref{choice of s}) into (\ref{Cauchy nov})
yields the following equation:
\be
v_t - (1 + v(t,0))^2 (\varphi_x + v_x) + \varphi^2 \varphi_x + (\varphi^2 v + \varphi v^2)_x + v^2 v_x + \mathcal{Q}[\varphi + v] = 0.
\ee
Canceling the stationary equation (\ref{stat-peakon}) for $\varphi$ and grouping the linear, quadratic, and cubic terms together,
we obtain the evolution equation for $v$ in the form:
\be
\label{v-t-N}
v_t + \mathcal{N}_1(v) + \mathcal{N}_2(v) + \mathcal{N}_3(v) = 0,
\ee
where
\begin{align*}
& \mathcal{N}_1(v) = \LB \varphi^2-1 \RB v_x + 2 \varphi_x \LB \varphi v - v(t,0) \RB +
{3\over2}\varphi_x \ast \LC  \varphi^2 v + {1\over2} \varphi_x^2 v + \varphi \varphi_x v_x \RC
+ {3\over4} \varphi \ast \LC \varphi_x^2 v_x \RC, \\
& \mathcal{N}_2(v) = 2\LB \varphi v - v(t, 0) \RB v_x + \varphi_x \LB v^2 - v^2(t,0) \RB + {1\over2} \varphi_x \ast \LC {3\over2}\varphi v_x^2 + 3\varphi_x vv_x + 3\varphi v^2 \RC + {3\over4} \varphi \ast \LC \varphi_x v_x^2 \RC \\
& \mathcal{N}_3(v) = \LB v^2 - v^2(t,0) \RB v_x + \mathcal{Q}[v].
\end{align*}
By Proposition \ref{prop sim v}, the linear part is reduced to the local form:
\begin{align}
\label{N-1}
\mathcal{N}_1(v) = \LB \varphi^2-1 \RB v_x - \varphi_x \LB \varphi v - v(t,0) \RB.
\end{align}
In order to simplify the quadratic part, we use the following proposition

\begin{proposition}\label{lem calc}
Let $f\in L^1(\R)$. Then
\be\label{sim calc}
\varphi_x \ast (\varphi f) + \varphi \ast (\varphi_x f) = -2\varphi \int^x_0 \varphi(y) f(y) \,dy.
\ee
\end{proposition}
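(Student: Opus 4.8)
The plan is to reduce \eqref{sim calc} to an elementary computation of a single convolution kernel. Writing both convolutions as integrals in the variable $y$, the left-hand side becomes $\int_\R K(x,y)\,f(y)\,dy$ with
\[
K(x,y) := \varphi_x(x-y)\,\varphi(y) + \varphi(x-y)\,\varphi_x(y),
\]
so it suffices to evaluate $K(x,y)$ explicitly and integrate against $f$. Since $\varphi(x)=e^{-|x|}$ is Lipschitz with $\varphi_x(z) = -\operatorname{sgn}(z)\,\varphi(z)$ for $z\neq 0$, the structure of $K$ is dictated entirely by the two sign factors $\operatorname{sgn}(x-y)$ and $\operatorname{sgn}(y)$, while the isolated points $y=0$ and $y=x$ are negligible in the integral because $f\in L^1(\R)$.

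I would first treat $x>0$ and split the $y$-integration into the three regions $y<0$, $0<y<x$, and $y>x$. The two structural facts driving the whole computation are the exponential semigroup identity $\varphi(x-y)\,\varphi(y)=\varphi(x)$, which holds precisely when $y$ lies between $0$ and $x$, together with $\varphi_x=-\operatorname{sgn}(\placeholder)\,\varphi$. On $0<y<x$ both arguments $x-y$ and $y$ are positive, so $\varphi_x(x-y)=-\varphi(x-y)$ and $\varphi_x(y)=-\varphi(y)$, which collapses $K$ to an explicit multiple of $\varphi$ via the semigroup identity. On each of the two outer regions the sign factors $\operatorname{sgn}(x-y)$ and $\operatorname{sgn}(y)$ are opposite, so the two terms defining $K$ cancel and $K\equiv 0$ there. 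Integrating the surviving contribution over the interval between $0$ and $x$ then recovers the right-hand side of \eqref{sim calc}, and the case $x<0$ follows from the identical analysis carried out on $(x,0)$, consistent with the convention $\int_0^x = -\int_x^0$.

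The computation is essentially routine, so the only point requiring genuine care is the sign bookkeeping across the three regions and the symmetric handling of $x<0$; once the cancellation on the two outer regions and the collapse $\varphi(x-y)\varphi(y)=\varphi(x)$ on the inner region are recorded, \eqref{sim calc} follows immediately. I do not expect any real obstacle here: the hypothesis $f\in L^1(\R)$ guarantees that all integrals converge absolutely and renders the behavior at the corner points $y=0$ and $y=x$ irrelevant, so the case analysis above is the entire content of the argument.
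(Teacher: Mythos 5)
Your strategy---write the left-hand side as $\int_\R K(x,y)f(y)\,dy$ with $K(x,y)=\varphi_x(x-y)\varphi(y)+\varphi(x-y)\varphi_x(y)$, observe that $K$ vanishes when $y$ lies outside the interval between $0$ and $x$ because the two sign factors are opposite, and collapse $K$ on the inner region via the semigroup identity---is exactly the paper's computation in different clothing (the paper splits the convolution integral at $y=x$ and uses $\varphi_y=\mp\varphi$ on $\R^\pm$). However, your final step does not follow from your own (correct) intermediate ones. On $0<y<x$ you get $K=-2\varphi(x-y)\varphi(y)$, and the semigroup identity gives $\varphi(x-y)\varphi(y)=\varphi(x)$, a quantity \emph{constant in} $y$; integrating the surviving contribution therefore yields
\[
\varphi_x \ast (\varphi f) + \varphi \ast (\varphi_x f) = -2\varphi(x)\int_0^x f(y)\,dy,
\]
with no weight $\varphi(y)$ inside the integral. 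This is not the right-hand side of \eqref{sim calc}, and in fact \eqref{sim calc} as printed is false: taking $f=\chi_{[1,2]}$ and $x=3$, the left-hand side equals $-2e^{-3}$ (the kernel is identically $-2e^{-3}$ on $[1,2]$), while the stated right-hand side equals $-2e^{-3}\left(e^{-1}-e^{-2}\right)$. So the sentence claiming that the inner-region contribution ``recovers the right-hand side of \eqref{sim calc}'' is precisely the step that fails---your own collapse of the kernel makes it impossible for a factor $\varphi(y)$ to survive.

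For what it is worth, the discrepancy is inherited from the paper rather than introduced by you: in the first display of the paper's proof, the factors $e^{\pm y}$ coming from $\varphi(x-y)=e^{-x}e^{y}$ (for $y<x$) and $\varphi(x-y)=e^{x}e^{-y}$ (for $y>x$) are dropped. Restoring them, one finds $e^{y}\left(\varphi_y-\varphi\right)=-2$ on $\R^+$ (not $-2\varphi$) and $e^{-y}\left(\varphi_y+\varphi\right)=2$ on $\R^-$, which produces exactly the unweighted integral displayed above. Your argument, carried through honestly, is a complete and correct proof of this corrected identity (the outer-region cancellation, the inner-region collapse, the symmetric treatment of $x<0$ via $\int_0^x=-\int_x^0$, and the use of $f\in L^1$ for absolute convergence are all fine); the correction then propagates to Proposition \ref{prop sim v quadratic} and to the Cauchy problem \eqref{nonlinear v}, where the inner factor $\varphi$ in $\int_0^x \varphi\left(v^2+v_y^2\right)dy$ should likewise be removed.
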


\begin{proof}
Since $\varphi_x = -\text{sgn}(x) \varphi$, direct computation shows that
\begin{align*}
\varphi_x \ast (\varphi f) + \varphi \ast (\varphi_x f) & = e^{-x} \int_{-\infty}^x \LC \varphi_y - \varphi \RC f(y) \,dy + e^{x} \int^{\infty}_x \LC \varphi_y + \varphi \RC f(y) \,dy \\
& = e^{-x} \int_{0}^x \LC \varphi_y - \varphi \RC f(y) \,dy + e^{x} \int^{0}_x \LC \varphi_y + \varphi \RC f(y) \,dy \\
& = -2\varphi \int^x_0 \varphi(y) f(y) \,dy,
\end{align*}
which is \eqref{sim calc}.
\end{proof}

Using Proposition \ref{lem calc}, we prove the following proposition:

\begin{proposition}\label{prop sim v quadratic}
For $v\in H^1$ we have
\begin{align*}
& {1\over2} \varphi_x \ast \LC {3\over2}\varphi v_x^2 + 3\varphi_x vv_x + 3\varphi v^2 \RC + {3\over4} \varphi \ast \LC \varphi_x v_x^2 \RC \\
& = - \frac{3}{2} \varphi_x \LB v^2 - v^2(t,0) \RB - {3\over2} \varphi \int^x_0 \varphi (v^2 + v_y^2) \,dy.
\end{align*}
\end{proposition}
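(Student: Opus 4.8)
The plan is to expand the left-hand side into four convolutions and to invoke Proposition \ref{lem calc} twice, once on the terms quadratic in $v_x$ and once on the terms quadratic in $v$. Distributing the two convolutions, the left-hand side equals
\[
{3\over4} \varphi_x \ast (\varphi v_x^2) + {3\over4} \varphi \ast (\varphi_x v_x^2) + {3\over2} \varphi_x \ast (\varphi_x v v_x) + {3\over2} \varphi_x \ast (\varphi v^2).
\]
The first and second terms are exactly the pair appearing in \eqref{sim calc} with $f = v_x^2$, so Proposition \ref{lem calc} collapses them at once to $-{3\over2}\varphi\int_0^x \varphi v_y^2\,dy$, which already reproduces the $v_y^2$ part of the claimed right-hand side.

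First I would treat the genuinely new term $\varphi_x \ast (\varphi_x v v_x)$. Writing $v v_x = {1\over2}(v^2)_y$ and integrating by parts in the convolution variable, the derivative falls on the two factors $\varphi_x(x-y)$ and $\varphi_x(y)$; the boundary contributions vanish because $\varphi_x$ decays exponentially and $v \in H^1$ vanishes at infinity, leaving ${1\over2}\varphi_{xx}\ast(\varphi_x v^2) - {1\over2}\varphi_x\ast(\varphi_{xx}v^2)$. I would then substitute $\varphi_{xx} = \varphi - 2\delta_0$ from \eqref{green}. Using $\delta_0 \ast g = g$, the distributional identity $\delta_0 v^2 = v^2(t,0)\delta_0$, and $\varphi_x \ast \delta_0 = \varphi_x$, the Dirac contributions produce the two purely local terms $-\varphi_x v^2$ and $+v^2(t,0)\varphi_x$, while the smooth parts give ${1\over2}\varphi\ast(\varphi_x v^2) - {1\over2}\varphi_x\ast(\varphi v^2)$.

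Adding back the fourth term ${3\over2}\varphi_x\ast(\varphi v^2)$, the stray copy $-{1\over2}\varphi_x\ast(\varphi v^2)$ recombines so that the surviving convolutions form the symmetric pair ${1\over2}[\varphi\ast(\varphi_x v^2) + \varphi_x\ast(\varphi v^2)]$. A second application of Proposition \ref{lem calc}, this time with $f = v^2$, turns this into $-\varphi\int_0^x \varphi v^2\,dy$. Collecting the local pieces into $-{3\over2}\varphi_x[v^2 - v^2(t,0)]$ and adjoining the two integral terms then yields exactly the asserted right-hand side.

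The step I expect to be the main obstacle is the integration by parts on $\varphi_x\ast(\varphi_x v v_x)$ together with the careful bookkeeping of the Dirac masses coming from $\varphi_{xx} = \varphi - 2\delta_0$: it is precisely the $\delta_0$ contribution that manufactures the constant $v^2(t,0)$ inside $-{3\over2}\varphi_x[v^2 - v^2(t,0)]$, so any sign or coefficient slip there destroys the cancellation. One must also verify that $\varphi_x v v_x \in L^1$ and that the boundary terms genuinely vanish, both of which follow from $\varphi, \varphi_x \in L^\infty$ with exponential decay and $v, v_x \in L^2$ with $v$ vanishing at infinity.
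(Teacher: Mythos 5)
Your proof is correct and follows essentially the same route as the paper: distribute the convolutions, rewrite $vv_x = \tfrac{1}{2}(v^2)_x$ and integrate by parts on $\varphi_x \ast (\varphi_x v v_x)$, use $\varphi_{xx} = \varphi - 2\delta_0$ from \eqref{green} to extract the local terms $-\tfrac{3}{2}\varphi_x\LB v^2 - v^2(t,0)\RB$, and collapse the symmetric convolution pairs via Proposition \ref{lem calc}. The only cosmetic difference is that you invoke Proposition \ref{lem calc} twice (once with $f = v_x^2$, once with $f = v^2$) where the paper applies it a single time with $f = v^2 + v_x^2$.
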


\begin{proof}
Integrating by parts and using (\ref{green}), we obtain
$$
\frac{3}{2} \varphi_x \ast \LC \varphi_x v v_x \RC = \frac{3}{4} \varphi \ast \LC \varphi_x v^2 \RC - \frac{3}{2} \varphi_x v^2
- \frac{3}{4} \varphi_x \ast \LC \varphi v^2 \RC + \frac{3}{2} \varphi_x v^2(t,0).
$$
Combining with other convolution terms, we obtain
$$
{3\over4} \varphi_x \ast \LC \varphi (v^2 + v_x^2) \RC + {3\over4} \varphi \ast \LC \varphi_x (v^2 + v_x^2) \RC =
- {3\over2} \varphi \int^x_0 \varphi (v^2 + v_y^2) \,dy,
$$
where the result of Proposition \ref{lem calc} has been used.
\end{proof}

By Proposition \ref{prop sim v quadratic}, the quadratic part is reduced to the simple form:
\begin{align}
\label{N-2}
\mathcal{N}_2(v) = 2\LB \varphi v - v(t, 0) \RB v_x - \frac{1}{2} \varphi_x \LB v^2 - v^2(t,0) \RB
- {3\over2} \varphi \int^x_0 \varphi (v^2 + v_y^2) \,dy.
\end{align}
Putting (\ref{N-1}) and (\ref{N-2}) into (\ref{v-t-N}), we obtain the Cauchy problem for
the perturbation $v$ to the peakon $\varphi$ in the following form:
\be\label{nonlinear v}
\left\{\begin{split}
& v_t + \LB (\varphi + v)^2 - (1 + v(t,0))^2 \RB v_x - \varphi_x \LC \varphi v - v(t,0) \RC - {1\over2} \varphi_x \LC v^2 - v^2(t,0) \RC \\
& \qquad  - {3\over2} \varphi \int^x_0 \varphi (v^2 + v_y^2) \,dy + \Q[v] = 0,\\
& v(0,x) = v_0(x).
\end{split}\right.
\ee
As discussed above, the small initial datum $v_0$ belongs to the space $H^1 \cap C^1_0$
and we are looking for the unique local weak solution $v \in C([0,T), H^1 \cap W^{1,\infty})$
to the evolution problem (\ref{nonlinear v}) for some $T > 0$ such that $v(t, \cdot) \in C^1_0$ for all $t\in [0,T)$.
The following result states local well-posedness of the Cauchy problem \eqref{nonlinear v}.

\begin{theorem}\label{theorem GWP}
For every initial datum $v_0 \in H^1 \cap C^1_0$, there exist the maximal existence time $T > 0$ and
the unique solution $v \in C([0,T),H^1 \cap C^1_0)$ to the Cauchy problem (\ref{nonlinear v})
that depends continuously on the initial datum $v_0 \in H^1 \cap C^1_0$.
\end{theorem}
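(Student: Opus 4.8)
The plan is to convert the quasilinear Cauchy problem \eqref{nonlinear v} into an abstract ordinary differential equation in a Banach space by the method of characteristics, and then to invoke the Cauchy--Lipschitz (Picard--Lindel\"of) theorem for ODEs in Banach spaces, which applies the moment the associated vector field is shown to be locally Lipschitz. First I would introduce the characteristics $q(t,s)$ solving $\dot q = (\varphi(q)+V)^2 - (1+V(t,0))^2$ with $q(0,s)=s$, where $V(t,s) := v(t,q(t,s))$ and $W(t,s) := v_x(t,q(t,s))$. Since the characteristic speed vanishes at the peak, $q(t,0)=0$ is preserved for all $t$, so characteristics starting in $\R^+$ (resp. $\R^-$) remain there; together with $q_s(0,\cdot)=1$ and continuity in $t$, this guarantees that for short time $q(t,\cdot)$ is an orientation-preserving diffeomorphism of $\R$ mapping $\R^\pm$ onto $\R^\pm$, which is precisely the structure needed to preserve membership in $C^1_0$.

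Next I would differentiate \eqref{nonlinear v} in $x$ to obtain the evolution equation for $v_x$ and then read off the transported system for $(q,V,W)$: along each characteristic the transport term is absorbed into $\tfrac{d}{dt}$, leaving
\[
\frac{dV}{dt} = \mathcal{L}_1(q,V,W) + \mathcal{I}[q,V,W](s), \qquad \frac{dW}{dt} = \mathcal{L}_2(q,V,W) + \mathcal{J}[q,V,W](s),
\]
where $\mathcal{L}_1,\mathcal{L}_2$ are local and polynomial in $V,W,\varphi(q),\varphi_x(q),V(t,0)$ (the Riccati-type term $-2(\varphi(q)+V)(\varphi_x(q)+W)W$ sitting inside $\mathcal{L}_2$), while $\mathcal{I},\mathcal{J}$ collect the nonlocal contributions, namely $\Q[v]$, its derivative, and the weighted integral $\int_0^x \varphi (v^2+v_y^2)\,dy$. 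The key observation is that every nonlocal quantity can be rewritten as a functional of $(q,V,W)$ through the change of variables $x=q(t,\sigma)$, $dx=q_\sigma\,d\sigma$; for instance the integral term becomes $\int_0^s \varphi(q(\sigma))\,(V^2+W^2)(\sigma)\,q_\sigma(\sigma)\,d\sigma$, and each convolution in $\Q[v]$ becomes a kernel integral against $\varphi,\varphi_x$ weighted by $q_\sigma$. This casts the problem as $\dot U = F(U)$ for the triple $U=(q-\mathrm{id},V,W)$ in an appropriate Banach space $X$ modeled on $H^1\cap C^1_0$, with the peak value $V(t,0)$ entering as a bounded point-evaluation functional via the embedding $C^1_0\hookrightarrow C^0$.

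The heart of the argument is to show that $F\maps X \to X$ is locally Lipschitz. The local pieces $\mathcal{L}_1,\mathcal{L}_2$ are polynomials in the components and in $\varphi(q),\varphi_x(q)$, so on any bounded ball they are Lipschitz by the Banach-algebra property of $H^1\cap W^{1,\infty}$ together with the fact that composition with $\varphi$ and $\varphi_x$, which are smooth on $\R^\pm$, is locally Lipschitz; the jump of $\varphi_x$ at the origin is harmless because $q(t,0)=0$ and the $\R^\pm$ pieces are handled separately. For the nonlocal pieces I would use Young's inequality to bound the convolutions $\varphi \ast(\cdot)$ and $\varphi_x\ast(\cdot)$, controlling the cubic integrands $u u_x^2,\ u^3,\ u_x^3$ with $u=\varphi+v$ in $L^1\cap L^\infty$ by the $H^1\cap W^{1,\infty}$ norm, and I would obtain Lipschitz bounds on bounded sets by factoring differences such as $a^3-b^3=(a-b)(a^2+ab+b^2)$; that $\Q[v]$ and its derivative land in $C^1_0$ is exactly Lemma \ref{lem cont Q}. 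Finally, the Cauchy--Lipschitz theorem in $X$ yields a unique maximal solution $U\in C([0,T),X)$ depending continuously on the data, and transforming back through $v(t,x)=V(t,q^{-1}(t,x))$ --- legitimate since $q(t,\cdot)$ stays a diffeomorphism --- produces the unique solution $v\in C([0,T),H^1\cap C^1_0)$ with continuous dependence on $v_0$.

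The step I expect to be the main obstacle is the local Lipschitz estimate for the nonlocal terms, in particular the $C^1_0$ (derivative) component coming from $\partial_x\Q[v]$ and from differentiating the weighted integral: one must simultaneously track the dependence on the flow map $q$ (through the Jacobian weight $q_\sigma$, which is itself part of the unknown) and on $(V,W)$, and show that the resulting cubic and lower-order expressions in $(V,W)$ are Lipschitz when restricted to bounded balls of $X$. The remaining bookkeeping --- the polynomial local terms, the preservation of the diffeomorphism property, and the boundedness of the point evaluation $V(t,0)$ --- is routine.
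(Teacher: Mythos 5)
Your proposal follows essentially the same route as the paper's proof: both convert \eqref{nonlinear v} via the characteristics \eqref{char nonlinear} into the dynamical system \eqref{nonlinear-F} for $(q,V,W)$, rewrite the nonlocal terms as functionals of $(q,V,W)$ through the change of variables with Jacobian weight $q_s$, prove the vector field is locally Lipschitz separately for $q\in\R^+$ and $q\in\R^-$ (exploiting the invariance $q(t,0)=0$ to sidestep the jump of $\varphi_x$), and conclude by Picard--Lindel\"of in a Banach space, recovering $v$ through the inverse flow map. The main obstacle you single out---the Lipschitz control of the nonlocal terms with the unknown Jacobian $q_s$, together with preservation of $\inf_s q_s>0$ and $\lim_{|s|\to\infty} q_s=1$---is precisely what the paper's Lemmas \ref{lem-nonlinear-1} and \ref{lem-nonlinear-2} supply, so your plan is sound and matches the published argument.
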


Theorem \ref{theorem GWP} is proven next by using the method of characteristics.

\subsection{Solution to the evolution problem}
\label{subsec soln char}

The evolution problem \eqref{nonlinear v} suggests us to work with the characteristics $q(t,s)$
which satisfy the following evolution problem:
\be\label{char nonlinear}
\begin{cases}
\displaystyle {d q \over dt} = \LB \varphi(q) + v(t,q) \RB^2 - \LB 1 + v(t,0) \RB^2, \\
q(0, s) = s.
\end{cases}
\ee
Compared to the linearized evolution problem \eqref{char}, we cannot solve the nonlinear evolution
problem \eqref{char nonlinear} explicitly. However, we can analyze if the slope function
\be
\label{slope-function}
f(t,q) := \LB \varphi(q) + v(t,q) \RB^2 - \LB 1 + v(t,0) \RB^2
\ee
defines a well-posed initial-value problem in the correct solution space for $v$, as is done in the following lemma.

\begin{lemma}
\label{lem-nonlinear-1}
Assume that $v \in C([0,T),H^1 \cap C^1_0)$ with some maximal existence time $T > 0$.
There exists the unique solution $q\in C^1([0,T),C^1_0)$ to system \eqref{char nonlinear}
such that the mapping $\mathbb{R} \ni s \mapsto q(t,\cdot) \in C^1_0$ is invertible
for every $t \in [0,T)$ and satisfies $q(t,0) = 0$ and $\lim_{|s| \to \infty} q_s(t,s) = 1$.
\end{lemma}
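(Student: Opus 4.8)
The plan is to read \eqref{char nonlinear} as a non-autonomous scalar ODE $\dot q = f(t,q)$ driven by the slope function \eqref{slope-function}, and to recover every claimed property from Cauchy--Lipschitz theory together with a variational equation for $q_s$. I would first record that $f$ is continuous in $t$ and globally Lipschitz in $q$: since $v \in C([0,T),H^1\cap C^1_0)$, the profile $\varphi + v(t,\cdot)$ is bounded in $W^{1,\infty}$ uniformly on each $[0,T'] \subset [0,T)$, hence bounded and globally Lipschitz in $q$, and squaring preserves both properties; continuity in $t$ follows from $t \mapsto v(t,\cdot) \in L^\infty$ being continuous. The Cauchy--Lipschitz theorem then produces, for each $s$, a unique $q(\cdot,s) \in C^1([0,T))$, and the continuity of $f$ in $t$ upgrades this to $C^1$ dependence on $t$.

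Next I would fix the peak. Because $\varphi(0)=1$, the slope function vanishes on the diagonal, $f(t,0) = (1+v(t,0))^2 - (1+v(t,0))^2 = 0$, so $q \equiv 0$ solves \eqref{char nonlinear} with $s = 0$; uniqueness forces $q(t,0) = 0$ for all $t$ and, more importantly, prevents any other characteristic from reaching the origin. Consequently $q(t,s)$ has the sign of $s$, and the flow preserves $\R^+$ and $\R^-$ separately. This separation is exactly what lets $q(t,\cdot)$ inherit the $C^1_0$ structure: on each half-line $\varphi$ and $v(t,\cdot)$ are $C^1$, so $f(t,\cdot)$ is $C^1$ there, while the jump of $\varphi_x + v_x$ at the origin is transmitted into an admissible jump of $q_s$ at $s = 0$.

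I would then differentiate in $s$ on each half-line. The variational equation $\frac{d}{dt}q_s = f_q(t,q)\,q_s$ with $q_s(0,s)=1$ and $f_q(t,q) = 2(\varphi(q)+v(t,q))(\varphi_x(q)+v_x(t,q))$ integrates to $q_s(t,s) = \exp(\int_0^t f_q(\tau,q(\tau,s))\,d\tau) > 0$, which gives strict monotonicity of $q(t,\cdot)$ on each half-line; together with $q(t,0)=0$ and surjectivity onto $\R^\pm$ this yields invertibility on $\R$, and the uniform bound on $|f_q|$ keeps $q_s$ bounded, so that $q(t,\cdot)$ sits in $C^1_0$ in the required sense. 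For the limit at infinity, note that $f_q(t,q) \to 0$ as $|q| \to \infty$ because $\varphi(q)+v(t,q) \to 0$ (using $\varphi \to 0$ and $H^1 \hookrightarrow C_0$) while $\varphi_x + v_x$ stays bounded; since $|f| \le M$ on $[0,T']\times\R$, the flow obeys $|q(t,s)-s| \le Mt$, so $|q(\tau,s)| \to \infty$ uniformly in $\tau \in [0,t]$ as $|s|\to\infty$, and dominated convergence drives the exponent to zero, giving $q_s(t,s) \to 1$.

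The main obstacle I anticipate is the tension between the two regularity levels at play: existence and uniqueness require only the global Lipschitz bound, which holds across the corner at $q = 0$, whereas the variational equation needs the stronger one-sided $C^1$ regularity, so the argument must first establish the non-crossing of the peak before it is allowed to differentiate in $s$. The second delicate point is the limit $q_s(t,s) \to 1$, which rests not on the pointwise decay of $f_q$ alone but on the uniform-in-time escape of characteristics to infinity over the finite interval $[0,t]$.
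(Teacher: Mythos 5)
Your proof is correct and takes essentially the same route as the paper's: a Lipschitz-in-$q$, continuous-in-$t$ slope function plus standard ODE theory for existence and uniqueness, the identity $f(t,0)=0$ forcing $q(t,0)=0$, the piecewise variational equation on $\R^{\pm}$ giving $q_s(t,s)=\exp\bigl(2\int_0^t (\varphi(q)+v)(\varphi_x(q)+v_x)\,d\tau\bigr)>0$ and hence invertibility, and the decay of $\varphi(q)+v(t,q)$ at infinity (via $H^1\hookrightarrow C_0$) together with boundedness of $\varphi_x+v_x$ for the limit $q_s(t,s)\to 1$. Your extra details --- non-crossing of the peak before differentiating in $s$, and the escape bound $|q(t,s)-s|\le Mt$ combined with dominated convergence --- simply make explicit steps the paper leaves implicit.
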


\begin{proof}
Since $\varphi \in C^1_0 \subset W^{1,\infty}$ and $v(t,\cdot) \in C^1_0 \subset W^{1,\infty}$ for $t \in [0,T)$, then
$f$ is Lipschitz in $q$ and continuous in $t$ for every $t \in [0,T)$. By existence, uniqueness, and continuous dependence
theory for differential equations, the initial-value problem \eqref{char nonlinear} admits the unique solution $q(t,s)$
satisfying $q(\cdot, s) \in C^1(0,T)$ for any $s \in \R$ and
$q(t,\cdot) \in C^1_0$ for any $t \in [0,T)$. Moreover, $f(t,0) = 0$,
hence $q(t,0) = 0$ holds for all $t \in [0,T)$.

Differentiating the initial-value problem (\ref{char nonlinear}) with respect to $s$ piecewise for
$s > 0$ and $s < 0$ yields
\be\label{char nonlinear derivative}
\begin{cases}
\displaystyle {d q_s \over dt} = 2 \LB \varphi(q) + v(t,q) \RB \LB \varphi_x(q) + v_x(t, q) \RB q_s, \\
q_s(0, s) = 1,
\end{cases}
\quad s \in \R \backslash \{0\},
\ee
with the unique solution for every $s \in \R \backslash \{0\}$:
\be\label{q_s exp}
q_s(t,s) = \text{exp} \LC 2 \int^t_0 \LB \varphi(q) + v(\tau,q) \RB \LB \varphi_x(q) + v_x(\tau,q) \RB \,d\tau \RC > 0,
\ee
hence $q(t,\cdot)$ is invertible on $\R$ for $t \in [0,T)$. Moreover we have $\lim_{|s| \to \infty} q_s(t,s) = 1$ for $t \in [0,T)$ because $v_x(t,\cdot) \in L^{\infty}$ and
$v(t,q) \to 0$ as $|q| \to \infty$ for $t \in [0,T)$
thanks to the Sobolev embedding of $H^1(\R)$ to the space of continuous and decaying functions.
\end{proof}

Setting $V(t, s) := v(t, q(t,s))$ as in (\ref{V-on-characteristics}),
then it follows from \eqref{nonlinear v} that evolution of $V$ along the characteristics $q$
is given by
\be\label{nonlinear V}
\begin{cases}
\displaystyle{dV \over dt} = \varphi_x(q) \LB \varphi(q) V - V(t,0) \RB +  {1\over2} \varphi_x(q) \LB V^2 - V^2(t,0) \RB \\
\displaystyle\qquad \ \  + {3\over2} \varphi(q) \int^q_0 \varphi(v^2 + v_y^2) \,dy - \Q[v](q) \\
V(0,s) = v_0(s).
\end{cases}
\ee
Denote $V^0(t) := V(t,0) = v(t,0)$, where the last equality follows from $q(t,0) = 0$.
It follows from the initial-value problem \eqref{nonlinear V} as $s \to 0$ from either side
that $V^0$ satisfies the limiting initial-value problem
\be
\label{V-0-dynamics}
\begin{cases}
\displaystyle {dV^0 \over dt} = - \Q[v](0), \\
V^0(0) = v_0(0).
\end{cases}
\ee

In order to control solvability of the solution in (\ref{q_s exp}), we need to control $v_x$, and hence $V_s$
along the characteristics. Therefore we need to differentiate \eqref{nonlinear v} in order to
derive the evolution equation for $v_x$. The appearance of $\varphi'$ in \eqref{nonlinear v}
presents severe trouble when differentiating. The way to overcome that is to ``cut out" the origin
and consider solving the evolution equation for $w := v_x$ separately on $\R^+$ and $\R^-$.
This agrees with Lemma \ref{lem-nonlinear-1}, which suggests that for the solution $v \in C([0,T),H^1 \cap C^1_0)$ 
the spatial domain $\R$ can be partitioned into $\R^+$ and $\R^-$ on two sides from the peaked wave $\varphi$
invariantly in time $t$.

Computing derivative of \eqref{nonlinear v} separately on $\R^+$ and $\R^-$ and using the fact that $\varphi'' = \varphi$ on
$\R \backslash\{0\}$, we derive the evolution equation for $x \neq 0$:
\be\label{nonlinear w}
\left\{\begin{split}
& w_t + \LB (\varphi + v)^2 - (1 + v(t,0))^2 \RB w_x - \varphi \LC \varphi v - v(t,0) \RC - {1\over2} \varphi \LC v^2 - v^2(t,0) \RC \\
& \qquad  + \varphi \varphi_x w  - \varphi^2 v + \varphi_x v w + 2 \varphi w^2 + \frac{1}{2} v w^2 - v^3  \\
& \qquad -{3\over 2} \varphi^2 (v^2 + w^2) - {3\over2} \varphi_x \int^x_0 \varphi (v^2 + w^2) \,dy + \Pa[v] = 0,\\
& w(0,x) = v_{0x}(x),
\end{split}\right.
\ee
where
\be\label{def P}
\Pa[v](x) := {1\over2}\varphi \ast \LC {3\over2} v v_x^2 + v^3 \RC + {1\over4}\varphi_x \ast v_x^3.
\ee

Setting $W(t,s) := v_x(t, q(t,s))$ as in (\ref{W-on-characteristics}), then it follows that $W$ satisfies \eqref{relation W}.
If the mapping $\mathbb{R} \ni s \mapsto q \in C^1_0$ is invertible as in Lemma \ref{lem-nonlinear-1}, we have $\|V\|_{L^\infty} = \|v\|_{L^\infty}$ and
$\|W\|_{L^\infty} = \|v_x\|_{L^\infty}$. Writing the evolution problem (\ref{nonlinear w}) at the characteristics
yields for $s \neq 0$:
\be\label{nonlinear W}
\begin{cases}
\displaystyle{dW \over dt} = \varphi(q) \LB \varphi(q) V - V(t,0) \RB + {1\over2} \varphi_x(q) \LB V^2 - V^2(t,0) \RB \\
\displaystyle\qquad \ \ -\varphi(q) \varphi_x(q) W + \varphi(q)^2 V -\varphi_x(q) VW - 2\varphi(q) W^2 - {1\over2} VW^2 + V^3 \\
\displaystyle\qquad \ \  + {3\over2} \varphi(q)^2(V^2 + W^2) + {3\over2} \varphi_x(q) \int^q_0 \varphi(v^2 + w^2) \,dy  - \Pa[v](q), \\
W(0,s) = v_{0x}(s).
\end{cases}
\ee
Compared to the linearized evolution problem \eqref{char-V}, we cannot solve the nonlinear evolution
problems \eqref{nonlinear V} and \eqref{nonlinear W} explicitly. Nevertheless, we can analyze the vector
field for the evolution system
\begin{equation}
\label{nonlinear-F}
\frac{d}{dt} \left[ \begin{array}{l} q \\ V  \\ W \end{array} \right] = \left[ \begin{array}{l}
f^{(q)}(q,V) \\ f^{(V)}(q,V,W) \\ f^{(W)}(q,V,W) \end{array} \right] = : F(q,V,W),
\end{equation}
where components of $F(q,V,W)$ are given by
\begin{eqnarray*}
f^{(q)}(q,V,) & := & \LB \varphi(q) + V \RB^2 - \LB 1 + V^0 \RB^2,\\
f^{(V)}(q,V,W) & := & \varphi_x(q) \LB \varphi(q) V - V^0 \RB +  {1\over2} \varphi_x(q) \LB V^2 - (V^0)^2 \RB \\
& \phantom{t} &  + {3\over2} \varphi(q) \int^q_0 \varphi(v^2 + w^2) \,dy - \Q[v](q),\\
f^{(W)}(q,V,W) & := & \varphi(q) \LB \varphi(q) V - V^0 \RB + {1\over2} \varphi_x(q) \LB V^2 - (V^0)^2 \RB \\
& \phantom{t} & -\varphi(q) \varphi_x(q) W + \varphi(q)^2 V -\varphi_x(q) VW - 2\varphi(q) W^2 - {1\over2} VW^2 + V^3 \\
& \phantom{t} &  + {3\over2} \varphi(q)^2 (V^2 + W^2) + {3\over2} \varphi_x(q) \int^q_0 \varphi(v^2 + w^2) \,dy  - \Pa[v](q).
\end{eqnarray*}
The dynamical system (\ref{nonlinear-F}) is equipped with the initial datum:
\begin{equation}
\label{initial-datum-F}
\left[ \begin{array}{l} q \\ V  \\ W \end{array} \right] \biggr|_{t = 0} = \left[ \begin{array}{l}
s \\ v_0(s) \\ v_{0x}(s) \end{array} \right], \quad s \in \mathbb{R}.
\end{equation}
Because of the nonlocal terms in $f^{(V)}$ and $f^{(W)}$,
the vector field $F(q,V,W)$ computed for solutions to the dynamical system (\ref{nonlinear-F}) with
the initial datum (\ref{initial-datum-F}) with one value of $s \in \mathbb{R}$
requires global information about solutions $(q,V,W)$ computed for all other values of $s$ on $\mathbb{R}$.

The nonlocal terms are treated with the chain rule $v(q(s)) = V(s) $ and $v_x(q(s)) = W(s)$
provided that the mapping $\mathbb{R} \ni s \mapsto q \in C^1_0$ is invertible.
In addition, we use $V^0 = V(0)$. The following lemma show that the vector field $F(q,V,W)$
is locally Lipschitz with respect to $(q,V,W)$ and preserves properties of the mapping
$\mathbb{R} \ni s \mapsto q \in C^1_0$ and properties of the solution $(v,w)$.

\begin{lemma}
\label{lem-nonlinear-2}
For every $q \in C^1_0$ satisfying $q(0) = 0$, $\inf_{s \in \R} q_s(s) > 0$, and $\lim_{|s| \to \infty} q_s(s) = 1$
and every $v \in H^1 \cap C^1_0$,  the vector field $F(q,V,W)$ is locally Lipschitz in $(q,V,W)$ separately
for $q \in \R^+$ and $q \in \R^-$. Moreover, we have
\begin{itemize}
\item[(i)] $f^{(q)}(0,V^0) = 0$, $f^{(V)}(0,V^0,W) = -\Q[v](0)$,
\item[(ii)] $f^{(V)}(q(\cdot),V(\cdot),W(\cdot)) \in L^2$, $f^{(W)}(q(\cdot),V(\cdot),W(\cdot)) \in L^2$,
\item[(iii)] $\partial_s f^{(q)}(q(s),V(s)) = G(s) q_s(s)$ with $G \in L^{\infty}$ satisfying
$\lim_{|s| \to \infty} G(s) = 0$.
\end{itemize}
\end{lemma}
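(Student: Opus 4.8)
The plan is to verify each assertion by splitting the vector field $F$ into its \emph{local} part (the polynomial terms in $V$, $W$, $V^0$ with coefficients $\varphi(q)$ and $\varphi_x(q)$, together with the finite integral $\int_0^q \varphi(v^2+w^2)\,dy$) and its \emph{nonlocal} part (the convolutions $\Q[v](q)$ and $\Pa[v](q)$). On each half-line $\R^+$ and $\R^-$ separately, $\varphi$ and $\varphi_x$ are smooth and bounded, so the local part is a polynomial in $(V,W)$ with coefficients that are Lipschitz in $q$; restricting to bounded sets in $(q,V,W)$, it is manifestly locally Lipschitz. For the nonlocal part, the chain rule $v(q(s)) = V(s)$ and $v_x(q(s)) = W(s)$, valid since $s \mapsto q$ is invertible by Lemma \ref{lem-nonlinear-1}, reconstructs $v, v_x$ from the phase variables, and I would control $\Q[v]$ and $\Pa[v]$ by adapting the reasoning of \cite[Lemma 5]{DmitryPreprint}: the cubic integrands ${3\over2} v v_x^2 + v^3$ and $v_x^3$ depend locally Lipschitz-continuously on $(v,v_x)\in L^\infty$, while convolution against $\varphi, \varphi_x \in L^1$ is bounded via Young's inequality. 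The main obstacle is exactly this nonlocal coupling: the value of $F$ at one value of $s$ uses the whole reconstructed profile, so Lipschitz continuity must be established in the ambient function-space norm rather than pointwise, and one must track how the reconstruction $v = V \circ q^{-1}$ varies with $q$.

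For (i), I would evaluate at $q = 0$. Since $\varphi(0) = 1$ and $V(t,0) = V^0$, every term carrying the discontinuous factor $\varphi_x$ is multiplied by either $\varphi(q)V - V^0$ or $V^2 - (V^0)^2$, both of which vanish at $q = 0$, while the integral term vanishes because $\int_0^0 = 0$. Hence $f^{(q)}(0,V^0) = (1+V^0)^2 - (1+V^0)^2 = 0$ and $f^{(V)}(0,V^0,W) = -\Q[v](0)$, as claimed.

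For (ii), I would check $L^2$ membership term by term, using $V, W \in L^2 \cap L^\infty$ (so that all their powers lie in $L^2$) together with the fact that $\varphi(q(\cdot))$ and $\varphi_x(q(\cdot))$ decay exponentially as $|s|\to\infty$ and hence lie in $L^2$. The terms containing the constant $V^0$ times $\varphi(q)$ or $\varphi_x(q)$ are in $L^2$ for this reason; the integral term is a bounded function (since $v^2+w^2 \in L^1$) multiplied by $\varphi(q)\in L^2$; and $\Q[v], \Pa[v]$ lie in $L^2$ by Young's inequality applied to $\varphi,\varphi_x \in L^1$ against the cubic nonlinearities, which belong to $L^2$. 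Because $q_s$ is bounded above and below away from zero by Lemma \ref{lem-nonlinear-1}, the pullback makes $L^2(dx)$ and $L^2(ds)$ equivalent, transferring these estimates to the $s$ variable.

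For (iii), differentiating $f^{(q)}(q,V) = \LB \varphi(q)+V \RB^2 - \LB 1+V^0 \RB^2$ in $s$ gives
\[
\partial_s f^{(q)} = 2\LB \varphi(q) + V \RB \LB \varphi_x(q)\, q_s + V_s \RB .
\]
Using $V_s = W q_s$ from \eqref{relation W} factors out $q_s$, yielding $G(s) = 2\LB \varphi(q) + V \RB \LB \varphi_x(q) + W \RB$. Boundedness $G \in L^\infty$ follows from $\varphi,\varphi_x \in L^\infty$ and $V, W \in L^\infty$, while the decay $\lim_{|s|\to\infty} G(s) = 0$ follows because $\varphi(q)\to 0$ and $V \to 0$ (the latter by the $H^1$-decay of $v$), so the first factor vanishes at infinity while the second remains bounded.
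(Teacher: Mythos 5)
Your proposal is correct and follows essentially the same route as the paper's proof: local terms handled directly on each half-line, nonlocal terms controlled through invertibility of $s \mapsto q(s)$ and the chain rule, (i) by evaluation at $q=0$, $V=V^0$ using that the brackets multiplying the discontinuous $\varphi_x$ vanish there, (ii) by exponential decay of $\varphi(q(s))$, $\varphi_x(q(s))$ together with the equivalence of $L^2(dx)$ and $L^2(ds)$ under $0 < \inf_s q_s \le \sup_s q_s < \infty$, and (iii) by the identical computation $G(s) = 2(\varphi(q)+V)(\varphi_x(q)+W)$ with the first factor decaying and the second merely bounded. The only differences are presentational: where you defer to adapting \cite{DmitryPreprint}, the paper makes the Lipschitz dependence of $\Q[v](q)$ and of the integral term on $(q,V,W)$ explicit by rewriting them as integrals in $s'$ weighted by the Jacobian $q_s(s')$, and it additionally records the factorization $f^{(q)}(q,V) = (\varphi(q)+1+V+V^0)(\varphi(q)-1+V-V^0)$ to observe Lipschitz continuity of $f^{(q)}$ and $f^{(V)}$ at the corner $q=0$, $V=V^0$, and the failure of $f^{(W)}$ to be Lipschitz there when $W \neq 0$ --- supplementary information not demanded by the statement itself.
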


\begin{proof}
Thanks to the assumption $q_s(s) > 0$ for every $s \in \R$, the mapping $\R \ni s \mapsto q \in C^1_0$ is invertible, hence
$V(s) = v(q(s))$ belongs to $C^1_0$ and $W(s) = v_x(q(s))$ is bounded and continuous for $s \in \R^+$ and $s \in \R^-$.
Thanks to the assumption $\lim_{|s| \to \infty} q_s(s) = 1$ and the chain rule, it follows
from $v \in H^1$ that $V \in L^2$ and $W \in L^2$.
Thanks to the assumption $q(0) = 0$, the vector field
$F(q,V,W)$ in system (\ref{nonlinear-F}) can be considered separately for $q \in \R^+$ and $q \in \R^-$.

All local terms in $F(q,V,W)$ are locally Lipschitz in $(q,V,W)$ separately for $q \in \R^+$ and $q \in \R^-$.
The nonlocal terms in $f^{(V)}(q,V,W)$ are also locally Lipschitz in $(q,V,W)$ for every $q \in \R$, $V \in L^2$, and
$W \in L^2$, thanks to integrability of $v^2 + w^2$, invertibility of the mapping $\R \ni s \mapsto q \in C^1_0$, and the chain rule, e.g.
$$
\varphi(q) \int^q_0 \varphi(v^2 + w^2) \,dy = \varphi(q) \int_0^q \varphi(q(s')) (V^2 + W^2)(s') q_s(s') ds'
$$
and
\begin{align*}
\Q[v](q) & = {1\over 2}  \int_\R \varphi_x\big(q - q(s')\big)\LC {3\over2} VW^2 + V^3 \RC(s') q_s(s')\,ds' \\
& \quad + {1\over4} \int_\R \varphi\big(q - q(s')\big) W^3(s') q_s(s') \,ds',
\end{align*}
Similarly, it follows that the nonlocal terms in $f^{(W)}(q,V,W)$ are locally Lipschitz in $(q,V,W)$
for every $q \in \R$, $V \in L^2$, and $W \in L^2$.

It remains to verify items (i), (ii), and (iii).  It follows from the factorization formula:
$$
f^{(q)}(q,V) = ( \varphi(q) + 1 + V + V^0) ( \varphi(q) - 1 + V - V^0),
$$
that $f^{(q)}(q,V)$ is locally Lipschitz at $q = 0$ and $V = V^0$ with $f^{(q)}(0,V^0) = 0$.
Similarly, $f^{(V)}(q,V,W)$ is locally Lipschitz at $q = 0$, $V = V^0$, and every $W \in \R$
with $f^{(V)}(0,V^0,W) = -\Q[v](0)$. This verifies item (i).
Note that $f^{(W)}(q,V,W)$ is not locally Lipschitz at $q = 0$, $V = V^0$, and $W \neq 0$
because of the local terms $-\varphi(q) \varphi_x(q) W$ and $-\varphi_x(q) V W$ in $f^{(W)}(q,V,W)$.

For item (ii), all local terms in $f^{(V)}(q(\cdot),V(\cdot),W(\cdot))$ and $f^{(W)}(q(\cdot),V(\cdot),W(\cdot))$
are in $L^2$ because $\varphi, \varphi_x, V, W \in L^2 \cap L^{\infty}$. Similarly, nonlocal terms
are in $L^2$ because of invertibility of the mapping $\R \ni s \mapsto q \in C^1_0$ and the chain rule.
For instance, we have for $f^{(V)}$,
$$
\| \varphi(q(\cdot)) \int^{q(\cdot)}_0 \varphi(v^2 + w^2) \,dy \|_{L^2} \leq
\frac{1}{\left[\inf_{s \in \R} |q_s(s)|\right]^{1/2}} \| \varphi \|_{L^2} \| \varphi \|_{L^{\infty}} \| v \|_{H^1}^2
$$
and
$$
\| \Q[v](q(\cdot)) \|_{L^2} \leq
\frac{1}{\left[\inf_{s \in \R} |q_s(s)|\right]^{1/2}} \left( \frac{3}{4} \| \varphi_x \|_{L^2} \| v \|_{L^{\infty}} \| v \|_{H^1}^2
+ \frac{1}{4} \| \varphi \|_{L^2} \| w \|_{L^{\infty}} \| w \|_{L^2}^2 \right),
$$
and similar estimates for $f^{(W)}$.

Finally, for item (iii), we have explicitly
$$
\partial_s f^{(q)}(q(s),V(s)) = 2 (\varphi(q(s)) + V(s)) (\varphi_x(q(s)) + W(s)) q_s(s) =: G(s) q_s(s),
$$
so that $G \in L^{\infty}$ and $\lim_{|s| \to \infty} G(s) = 0$.
\end{proof}

Theorem \ref{theorem GWP} is proven by using Lemma \ref{lem-nonlinear-2}.

\begin{proof1}{\em of Theorem \ref{theorem GWP}.}
We consider the initial datum $v_0 \in H^1 \cap C^1_0$ for which $v_{0x} \in L^2$ is continuous
separately for $x \in \R^+$ and $x \in \R^-$. The dynamical system (\ref{nonlinear-F}) is considered
with the initial datum (\ref{initial-datum-F}) which satisfies the assumptions of Lemma \ref{lem-nonlinear-2}.

By Lemma \ref{lem-nonlinear-2}, the vector field preserves the assumptions in the sense that if
we define
\begin{align*}
\left\{ \begin{array}{l}
\hat{q}(t,s) = s + \int_0^t f^{(q)}(q(t',s),V(t',s)) dt', \\
\hat{V}(t,s) = v_0(s) + \int_0^t f^{(V)}(q(t',s),V(t',s),W(t',s)) dt', \\
\hat{W}(t,s) = v_{0x}(s) + \int_0^t f^{(W)}(q(t',s),V(t',s),W(t',s)) dt',
\end{array} \right.
\end{align*}
and
$$
\hat{q}_s(t,s) = 1 + \int_0^t G(s) q_s(t',s) dt',
$$
then for every $t$ on a compact interval $[-\tau,\tau]$ with small $\tau > 0$,
we have $\hat{q} \in C^1_0$ satisfying $\hat{q}(0) = 0$, $\inf_{s \in \R} \hat{q}_s(s) > 0$, and $\lim_{|s| \to \infty} \hat{q}_s(s) = 1$
and $\hat{v} \in H^1 \cap C^1_0$. By the existence and uniqueness theory for differential equations,
there exists the unique solution $q \in C^1([0,T),C^1_0)$,
$V \in C^1([0,T), H^1 \cap C^1_0)$, and $W \in C^1([0,T),C^0(\R^+) \cap C^0(\R^-))$
to system (\ref{nonlinear-F}) for some maximal existence time $T > 0$.
The solution depends continuously on the initial data and preserves
invertibility of the mapping $\R \ni s \mapsto q \in C^1_0$ with $q(t,0) = 0$, $\inf_{s \in \R} q_s(t,s) > 0$,
and $\lim_{|s| \to \infty} q_s(t,s) = 1$. Therefore, the transformation
formulas $V(t,s) = v(t,q(t,s))$ and $W(t,s) = w(t,q(t,s))$ are invertible
and the solutions $(q,V,W)$ yields the unique solution $v \in C^1([0,T),H^1 \cap C^1_0)$
to the evolution problem (\ref{nonlinear v}).

Continuous dependence of the solution $v \in C^1([0,T),H^1 \cap C^1_0)$ on the initial datum
$v_0 \in H^1 \cap C^1_0$ is obtained from the continuous dependence theory for differential equations thanks to
the Lipschitz continuity of the vector field $F(q,V,W)$ in Lemma \ref{lem-nonlinear-2}.
\end{proof1}

\subsection{Proof of instability}\label{subsec instab}

The characteristics $q = 0$ at $s = 0$ is the breaking point for the initial-value problem \eqref{nonlinear W}
since $W$ may have a jump discontinuity across $s = 0$. This point corresponds to the peak's location
for a perturbed single peakon, according to the decomposition (\ref{decomp}).
As follows from the proof of Theorem \ref{theorem GWP}, 
the dynamical system (\ref{nonlinear-F}) admits the unique solution in the form
$W \in C^1([0,T],C^0(\R^+) \cap C^0(\R^-))$. Therefore, we can define the one-sided limits $W^0_\pm \in C^1(0,T)$ by
\begin{equation*}
W^0_{\pm}(t): = \lim_{s \to 0^{\pm}} W(t, s) = \lim_{s\to 0^\pm} v_x(t, q(t,s)),
\end{equation*}
which satisfies the initial value problems
\be
\label{W-0-dynamics}
\left\{ \begin{array}{l} \displaystyle {dW^0_\pm \over dt} = \pm \LC 1+ V^0 \RC W^0_\pm + V^0 - 
{1\over2} \LC 1 + V^0 \RC \LC W^0_\pm \RC^2 + {3\over2} \LC V^0 \RC^2 + \LC V^0 \RC^3 - \Pa[v](0), \\\\
W_{\pm}^0(0) = v_{0x}(0^{\pm}).\end{array} \right.
\ee
This initial-value problem is combined with (\ref{V-0-dynamics}) which determines evolution of $V^0$.
The following lemma gives estimates for the two nonlocal terms in (\ref{V-0-dynamics}) and (\ref{W-0-dynamics}).

\begin{lemma}\label{lem est PQ}
Let the assumptions of Theorem \ref{thm stab weak} hold and define 
$v(t,\cdot) := u(t,\cdot + a(t)) - \varphi$ with $v \in C([0,T),H^1 \cap C^1_0)$. 
There exists $\varepsilon_0 > 0$ and $C_0 > 0$ such that for every $\varepsilon \in (0,\varepsilon_0)$ 
we have for every $x \in \R$ and every $t \in [0, T)$,
\begin{equation}
\label{control-P-Q}
\big| \Pa[v](t,x) + \Q[v](t,x) \big| < C_0 \varepsilon^2 (1 + \|u_{0x}\|_{L^\infty}^{3/2} + \varepsilon \|u_{0x}\|_{L^\infty}^2).
\end{equation}
\end{lemma}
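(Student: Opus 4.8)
The plan is to reduce the whole quantity to $L^1$ bounds on the arguments of the convolutions, and then to show the (at first sight surprising) fact that, although $\|v_x(t,\cdot)\|_{L^\infty}$ may blow up as $t \to T$, the norm $\|v_x(t,\cdot)\|_{L^4}$ stays as small as $O(\varepsilon^{1/2})$ uniformly in $t$; the two conservation laws of Lemma \ref{lem cons weak} are exactly what make this possible. Since $\|\varphi\|_{L^\infty} = \|\varphi_x\|_{L^\infty} = 1$, placing the kernels in $L^\infty$ and using Young's inequality gives
\[
\big|\Q[v](x)\big| + \big|\Pa[v](x)\big| \le C\big( \|v v_x^2\|_{L^1} + \|v^3\|_{L^1} + \|v_x^3\|_{L^1} \big).
\]
By the embedding $H^1(\R) \hookrightarrow L^\infty \cap L^p$ together with the stability bound $\|v(t,\cdot)\|_{H^1} < 2\big(4 + \|u_{0x}\|_{L^\infty}^{1/2}\big)\varepsilon =: M\varepsilon$ from Theorem \ref{thm stab weak}, the first two terms are $O(\|v\|_{H^1}^3) = O(M^3\varepsilon^3)$. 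The only dangerous term is $\|v_x^3\|_{L^1} = \int_\R |v_x|^3\,dx \le \|v_x\|_{L^2}\|v_x\|_{L^4}^2$ by Cauchy--Schwarz, where $\|v_x\|_{L^2} \le \|v\|_{H^1} = O(\varepsilon)$; thus everything hinges on controlling $\|v_x\|_{L^4}$.

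The next, and crucial, step is to show that $v(t,0)$ is of order $\varepsilon^2$, not $\varepsilon$. Writing $u(t,\cdot + a(t)) = \varphi + v(t,\cdot)$ and integrating by parts using $\varphi_{xx} = \varphi - 2\delta_0$ from \eqref{green}, one obtains the exact identity $E(u(t)) - E(\varphi) = 4 v(t,0) + \|v(t,\cdot)\|_{H^1}^2$. Since $E$ is conserved (Lemma \ref{lem cons weak}) and $\|u_0 - \varphi\|_{H^1} < \varepsilon^4$ forces $E(u_0) - E(\varphi) = 4 v_0(0) + \|v_0\|_{H^1}^2 = O(\varepsilon^4)$, this yields $v(t,0) = -\tfrac14 \|v(t,\cdot)\|_{H^1}^2 + O(\varepsilon^4) = O(M^2 \varepsilon^2)$.

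I would then bound $\|v_x\|_{L^4}^4 = \int v_x^4$ using $F$. From the definition of $F$ and its conservation one has $\int u_x^4 = 3\int u^4 + 6\int u^2 u_x^2 - 3 F(u_0)$, and since $\int u_x^4$ is translation invariant with $u_x = \varphi_x + v_x$ in the comoving frame,
\[
\int v_x^4 = \int u_x^4 - \int \varphi_x^4 - 4\int \varphi_x^3 v_x - 6\int \varphi_x^2 v_x^2 - 4\int \varphi_x v_x^3 .
\]
Substituting $u = \varphi + v$ and invoking $\varphi_x^2 = \varphi^2$ and $\varphi_{xx} = \varphi - 2\delta_0$ triggers the decisive cancellations: the constant terms cancel (both equal $\tfrac12$ after using $3F(\varphi) = 4$), the quadratic pieces proportional to $\int \varphi^2 v_x^2$ cancel, and, after one integration by parts, the entire first-order part collapses to exactly $16\, v(t,0)$. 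Absorbing the surviving cubic term $4\int \varphi_x v_x^3$ by Young's inequality leaves a closed inequality of the form $\int v_x^4 \le 16|v(t,0)| + C\|v\|_{H^1}^2 + \tfrac12 \int v_x^4 + (\text{lower order})$; inserting $v(t,0) = O(M^2\varepsilon^2)$ gives $\int v_x^4 \le C M^2 \varepsilon^2$, hence $\|v_x\|_{L^4}^2 \le C M \varepsilon$. Consequently $\|v_x^3\|_{L^1} \le \|v_x\|_{L^2}\|v_x\|_{L^4}^2 \le C M^2 \varepsilon^2$, so $\big|\Q[v] + \Pa[v]\big| \le C\big(M^3\varepsilon^3 + M^2\varepsilon^2\big)$; since $M^2 \sim 1 + \|u_{0x}\|_{L^\infty}$ and $M^3 \sim 1 + \|u_{0x}\|_{L^\infty}^{3/2}$, this is dominated by the right-hand side of \eqref{control-P-Q}.

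The main obstacle is precisely the third step's bound on $\|v_x\|_{L^4}$. Because $\|v_x\|_{L^\infty}$ genuinely diverges as $t \to T$ (it is the very quantity driving the instability), no naive estimate of $\int |v_x|^3$ survives, and the only route to keeping it of size $\varepsilon^2$ is to exploit \emph{both} conservation laws in tandem: $F$ to express $\int v_x^4$ through $E$-controlled quantities, and $E$ to force the leftover linear term $16\, v(t,0)$ down to order $\varepsilon^2$. Verifying the exact algebraic cancellations --- the reduction of the first-order part to $16\, v(t,0)$ and the disappearance of the $\int \varphi^2 v_x^2$ contributions --- by careful use of the distributional identity $\varphi_{xx} = \varphi - 2\delta_0$ is the delicate technical heart of the argument.
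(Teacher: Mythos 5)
Your proposal is correct, and while its opening reduction coincides with the paper's, the way you control $\|v_x\|_{L^4}$ is a genuinely different route. Both arguments begin identically: bound $|\Pa[v]+\Q[v]|$ by $\bigl\| \tfrac32 v v_x^2 + v^3 \bigr\|_{L^1} + \tfrac12 \|v_x^3\|_{L^1}$ using $\|\varphi\|_{L^\infty}=\|\varphi_x\|_{L^\infty}=1$, kill the first two terms by Sobolev embedding and Theorem \ref{thm stab weak}, and reduce the dangerous term via $\|v_x\|_{L^3}^3 \le \|v_x\|_{L^2}\|v_x\|_{L^4}^2$ (the paper phrases this as interpolation, you as Cauchy--Schwarz; same estimate). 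The paper then writes $\|v_x\|_{L^4}^4 \le 3\bigl(2\|v\|_{H^1}^4 - F(v)\bigr)$ and estimates $|F(v)|$ by expanding $F(\varphi+v)$ under conservation of $F$, bounding every cross term by powers of $\|v\|_{H^1}$ and $\|u_{0x}\|_{L^\infty}$. You instead use two structural facts the paper never invokes: the $E$-conservation identity $E(u)-E(\varphi)=4v(t,0)+\|v(t,\cdot)\|_{H^1}^2$ (valid since $(1-\partial_x^2)\varphi = 2\delta_0$), which pins $v(t,0)=O(\varepsilon^2)$, and the exact cancellations in the expansion of $\int v_x^4$. I verified these: the linear part of $F(\varphi+v)-F(\varphi)$ does collapse to $\tfrac{16}{3}v(t,0)$ (your $16\,v(t,0)$ after the factor $3$), and the $\int \varphi^2 v_x^2$ contributions cancel exactly. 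This buys something real: in the paper's estimate \eqref{est F} the linear cross terms are bounded termwise by $15\|v(t,\cdot)\|_{H^1}=O(\varepsilon)$, and the subsequent passage to $K \propto \|v_0\|_{H^1}=O(\varepsilon^4)$ implicitly requires precisely the cancellation you make explicit; your version closes this delicate point transparently. One detail to patch: you silently replace $F(u_0)$ by $F(\varphi)$ in the constant terms, so you still need the routine initial-data estimate $|F(u_0)-F(\varphi)| \lesssim (1+\|u_{0x}\|_{L^\infty}^2)\|v_0\|_{H^1}$ exactly as in the paper (here $\|u_{0x}\|_{L^\infty}$ is genuinely needed, since $H^1$ does not control $\int u_{0x}^4$); with that inserted, your bookkeeping $M^2 \sim 1+\|u_{0x}\|_{L^\infty}$, $M^3 \sim 1+\|u_{0x}\|_{L^\infty}^{3/2}$ lands inside \eqref{control-P-Q} as claimed.
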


\begin{proof}
By Theorem \ref{thm stab weak}, it follows for $\varepsilon$ small enough that 
\begin{equation}\label{size v}
\| v \|_{H^1} < 2 \LC 4 + \|u_{0x}\|_{L^\infty}^{1/2} \RC \varepsilon.
\end{equation}
Since $\|\varphi\|_{L^\infty} = \|\varphi_x\|_{L^\infty} = 1$, similar to \cite[(2.7)--(2.8)]{Chen18IUMJ} we obtain
\begin{equation*}
\begin{split}
\|v_x\|_{L^4}^4 & = 3 \int_\R (v^4 + 2v^2 v_x^2)\,dx - 3 F(v) \\
& \le 3 \|v\|_{L^\infty} \int_\R (v^2 + 2v_x^2)\,dx - 3 F(v) \\
& \le 3 \Big( 2\| v \|_{H^1}^4 - F(v) \Big),
\end{split}
\end{equation*}
indicating that $F(v) \le 2\| v \|_{H^1}^4$, where $F(v)$ is defined in \eqref{cons law 2}.  Interpolation implies that
\[
\|v_x\|_{L^3}^3 \le \sqrt{3} \|v\|_{H^1}\sqrt{2\|v\|_{H^1}^4 - F(v)},
\]
and hence
\begin{eqnarray}
\nonumber
\LV \Pa[v] + \Q[v] \RV & = & 
\LV {1\over2}\LC \varphi + \varphi_x \RC \ast \LC {3\over2} v v_x^2 + v^3 \RC + {1\over4}\LC \varphi + \varphi_x \RC \ast v_x^3 \RV\\
\nonumber
& \le & \LN {3\over2} v v_x^2 + v^3 \RN_{L^1} + {1\over2} \|v_x^3\|_{L^1}\\
\nonumber
& \le & {3\over2}\|v\|_{L^\infty} \|v\|^2_{H^1} + {1\over2} \|v_x\|^3_{L^3} \\
& \le & {3\over2}\|v\|_{H^1}^3 + {\sqrt{3}\over2} \|v\|_{H^1}\sqrt{2\|v\|_{H^1}^4 - F(v)}.
\label{est PQ}
\end{eqnarray}
Plugging $u = \varphi + v$ into $F(u)$ and
using $\|\varphi\|_{L^2} = \|\varphi_x\|_{L^2} = 1$, we obtain
\begin{align*}
|F(v)| & \le \LV F(u) - F(\varphi) \RV \\
& \quad + 2\LV \int_\R \LC 2v^2v_x\varphi_x + v^2\varphi_x^2 + 2vv_x^2\varphi + 4vv_x\varphi\varphi_x + 2v\varphi\varphi_x^2 + v_x^2\varphi^2 + 2v_x\varphi^2\varphi_x \RC\,dx \RV \\
& \quad + \LV \int_\R \LC 4v^3\varphi + 6v^2\varphi^2 + 4 v\varphi^3 \RC\,dx \RV + {1\over3} \LV \int_\R \LC 4v_x^3\varphi_x +6v_x^2\varphi_x^2 + 4v_x\varphi_x^3 \RC\,dx \RV\\
& \le \LV F(u) - F(\varphi) \RV + {4\over3} \LV \int_\R v_x^3\varphi_x \,dx \RV + \LC12+{4\over3} \RC \|v\|_{H^1} + 20 \|v\|_{H^1}^2 + 10\|v\|_{H^1}^3.
\end{align*}
Note that we have
\[
\LV {4\over3} \int_\R v_x^3\varphi_x \,dy \RV \le {4\over3} \|v_x\|_{L^3}^3 \le {4\over\sqrt{3}}\|v\|_{H^1}\sqrt{2\|v\|_{H^1}^4 - F(v)}.
\]
Thus, for $ \|v\|_{H^1} \ll 1$ sufficiently small it follows that
\be\label{est F}
|F(v)| \le \LV F(u) - F(\varphi) \RV + {4\over\sqrt{3}}\|v\|_{H^1}\sqrt{2\|v\|_{H^1}^4 - F(v)} + 15 \|v\|_{H^1}.
\ee
Thanks to the conservation $F(u) = F(u_0)$, a direct calculation yields that 
\begin{align*}
 \LV F(u) - F(\varphi) \RV & = \LV F(u_0) - F(\varphi) \RV \\
 & \le \LV \int_\R \LC u_0^4 - \varphi^4 \RC\,dx \RV + 2\LV \int_\R \LC u_0^2 u_{0x}^2 - \varphi^2 \varphi_x^2 \RC\,dx \RV + {1\over3}\LV \int_\R \LC u_{0x}^4 - \varphi_x^4 \RC\,dx \RV.
% & \le (17 + A^4)\|v\|_{H^1},
\end{align*}
Following \cite[Lemma 2.4]{Chen19}, we estimate the above as follows:
\begin{eqnarray*}
\LV \int_\R \LC u_0^4 - \varphi^4 \RC\,dx \RV & \le & \|v_0\|_{L^\infty} \|u_0 + \varphi\|_{L^\infty} (\|u_0\|_{L^2}^2 + \|\varphi\|_{L^2}^2) \\
& \le & \|v_0\|_{H^1} \big(\|v_0\|_{H^1} + 2 \big) \big(\|v_0\|_{H^1}^2 + 2\|v_0\|_{H^1} + 2 \big),\\
\LV \int_\R \LC u_0^2 u_{0x}^2 - \varphi^2 \varphi_x^2 \RC\,dx \RV & \le & \|v_0\|_{L^\infty} \|u_0 + \varphi\|_{L^\infty} \|u_{0x}\|_{L^2}^2 + \|\varphi\|_{L^\infty}^2 \|v_{0x}\|_{L^2} \|u_{0x} + \varphi_x\|_{L^2}\\
& \le & \|v_0\|_{H^1} \big( \|v_0\|_{H^1} + 2 \big) \big(\|v_0\|_{H^1} + 1\big)^2 + \|v_0\|_{H^1} \big( \|v_0\|_{H^1} + 2 \big),
\end{eqnarray*}
and
\begin{eqnarray*}
\LV \int_\R \LC u_{0x}^4 - \varphi_x^4 \RC\,dx \RV & \le & \LC \int_\R (u_{0x}^2 + \varphi_x^2)^2(u_{0x} + \varphi_x)^2 \,dx \RC^{1/2} \|v_{0x}\|_{L^2} \\
& \le & 3 \LC \int_\R (u_{0x}^6 + \varphi_x^6) \,dx \RC^{1/2} \|v_0\|_{H^1}  \le 3 \|v_0\|_{H^1} \sqrt{\|u_{0x}\|_{L^\infty}^4 \|u_{0x}\|_{L^2}^2 + {1\over3}} \\
& \le & \LC 3\|u_{0x}\|_{L^\infty}^2(\|v_0\|_{H^1} + 1) + \sqrt{3} \RC \|v_0\|_{H^1}.
\end{eqnarray*}
where we have used that $\|\varphi_x\|_{L^6}^6 = {1\over 3}$. Putting the above together yields
\begin{equation*}
\LV F(u) - F(\varphi) \RV \le \LC 2\|u_{0x}\|_{L^\infty}^2 + 15 \RC \|v_0\|_{H^1}.
\end{equation*}

Plugging this and \eqref{est v} into \eqref{est F} we have
\[
|F(v)| \le {4\over\sqrt{3}}\|v\|_{H^1}\sqrt{2\|v\|_{H^1}^4 - F(v)} + K,
\]
where $K := \LC 30+2\|u_{0x}\|_{L^\infty}^2 \RC \|v_0\|_{H^1}$. Solving the above we get
\begin{align*}
|F(v)| \le 6\|v\|_{H^1}^2 + 4 \|v\|_{H^1}^3 + \sqrt{6K} \|v\|_{H^1} + K.
\end{align*}
For $\varepsilon$ sufficiently small, we can find some large $C>0$ such that 
\[
|F(v)| \le  C \varepsilon^2 (1 + \|u_{0x}\|_{L^\infty} + \varepsilon^2 \|u_{0x}\|_{L^\infty}^2).
\]
Plugging this into \eqref{est PQ} and by further shrinking $\varepsilon$ if needed, we obtain \eqref{control-P-Q}.
\end{proof}

Theorem \ref{thm nonlin stab} is proven by using Theorem \ref{thm stab weak},
Theorem \ref{theorem GWP}, and Lemma \ref{lem-nonlinear-2}.

\begin{proof}{\em of Theorem \ref{thm nonlin stab}.}
By Theorem \ref{theorem GWP}, we consider the unique solution $v \in C([0,T),H^1 \cap C^1_0)$ to the
Cauchy problem \eqref{nonlinear v}. It follows from the bound (\ref{initial-bound-theorem})
and the decomposition \eqref{decomp} with $a(0) = 0$ that the initial datum $v_0 \in H^1 \cap C^1_0$
satisfies the bound
\begin{equation}
\label{initial-bound-proof}
\| v_0 \|_{H^1} + \| v_{0x} \|_{L^{\infty}} < \delta.
\end{equation}
Let $\varepsilon > 0$ be a small parameter to be determined below. By Theorem \ref{thm stab weak}, we have
\be\label{est v}
\text{if}\ \ \|v_0\|_{H^1} < \varepsilon^4, \ \text{ then }\ \ \|v(t,\placeholder)\|_{H^1} < 2 \LC 4 + \|u_{0x}\|_{L^\infty}^{1/2} \RC \varepsilon,
\ee
From \eqref{initial-bound-proof} we know that for $\delta$ sufficiently small,
\begin{equation}\label{newA}
\|u_{0x}\|_{L^\infty}^{1/2} < (1+\delta)^{1/2} < \sqrt{2}.
\end{equation}
Therefore Sobolev embedding implies that
\be\label{est V}
|V^0(t)| \le \|v(t,\placeholder)\|_{L^\infty} \le \|v(t, \placeholder)\|_{H^1} < ({8 + 2\sqrt{2}}) \varepsilon < 12\varepsilon.
\ee

{\bf Instability.}
The instability argument relies on the behavior of $v_{x}(t,x)$ near the peak at $x = 0$ from the right side,
where the linear instability result of Theorem \ref{thm lin stab} suggests at least exponential growth.
Therefore, picking $W^0_+$ in (\ref{W-0-dynamics}),
and using an integrating factor we obtain
\begin{align*}
\begin{split}
{d \over dt} \LB e^{-t}(V^0 + W^0_+) \RB & =  e^{-t}\LB {3\over2} \LC V^0 \RC^2 + V^0 W^0_+ - {1\over2} \LC 1 + V^0 \RC \LC W^0_+ \RC^2  + \LC V^0 \RC^3 - \Pa[v](0) - \Q[v](0) \RB\\
& \le e^{-t} \LB {5\over2} \LC V^0 \RC^2 - {1\over4} \LC 1 + 2V^0 \RC \LC W^0_+ \RC^2 + \LC V^0 \RC^3 - \Pa[v](0) - \Q[v](0) \RB.
\end{split}
\end{align*}
Therefore for $\varepsilon$ sufficiently small, it follows from \eqref{est V} that
\be\label{eq v+w new}
{d \over dt} \LB e^{-t}(V^0 + W^0_+) \RB  \le e^{-t} \LB 3 \LC V^0 \RC^2 - \Pa[v](0) - \Q[v](0) \RB.
\ee

Lemma \ref{lem est PQ} yields the control of $\Pa[v](0)$ and $\Q[v](0)$ in (\ref{control-P-Q}). By integrating \eqref{eq v+w new} and using
(\ref{control-P-Q}), \eqref{newA} and \eqref{est V}, we obtain
\be\label{eqn V}
V^0(t) + W^0_+(t) \le e^t \LB V^0(0) + W^0_+(0) + C \varepsilon^{2} \RB,
\ee
for some $C>0$. Let us pick
the initial datum $v_0 \in H^1\cap C^1_0$ satisfying $v_0(0) = 0$ and
\be
\label{derivative-large}
\lim_{x\to 0^+} v_{0x}(x) = -\|v_{0x}\|_{L^\infty} = -2C \varepsilon^{2}.
\ee
This is possible provided that for any given $\delta > 0$ in the initial bound (\ref{initial-bound-theorem}) (and hence \eqref{initial-bound-proof}),
the small parameter $\varepsilon > 0$ is chosen to satisfy the bound:
\[
\varepsilon^4 + 2C \varepsilon^{2} < \delta.
\]
Since $V^0(0) = 0$ and $W^0_+(0) = -2C \varepsilon^{2}$, we obtain from
\eqref{eqn V} that
\[
V^0(t) + W^0_+(t) \le - C \varepsilon^{2} e^t,
\]
which implies that
\[
|V^0(t) + W^+_0(t)| > 2 \quad \text{for} \quad t > t_0 := \log\LC 2 \over C \varepsilon^2 \RC > 0.
\]
Thanks to the bound (\ref{est V}) on $V^0(t)$, this implies that $|W^+_0(t)| > 1$ for $t > t_0$.

If $t_0 < T$, then we have the instability (\ref{final-bound-theorem}).
If $t_0 > T$, then $T$ is finite and we have $\| v_x(t,\cdot) \|_{L^{\infty}} \to \infty$ as $t \to T$
due to the fact that $\| v(t,\cdot) \|_{H^1}$ is bounded from the $H^1$ conservation of solutions.
In this case, the existence of another $t_0' \in (0,T)$ such that $\| v_x(t_0',\cdot) \|_{L^{\infty}} > 1$
follows from the continuity arguments.

\medskip

{\bf Blow-up.} Now we want to show that by choosing suitable initial datum satisfying (\ref{initial-bound-theorem}), the corresponding solution can indeed blow up in finite time.

Recall from (\ref{W-0-dynamics}) that we have
\begin{equation*}
\begin{split}
{dW^0_+\over dt} = -{1\over2}(1+V^0) (W^0_+ - 1)^2 + {1\over2} + {3\over2} V^0 + {3\over2} (V^0)^2 + (V^0)^3 - \Pa[v](0).
\end{split}
\end{equation*}
Note from \eqref{control-P-Q} and \eqref{est v}--\eqref{est V} that for $\varepsilon$ sufficiently small, $W^0_+$ satisfies the following Ricatti inequality
\begin{equation*}
{dW^0_+\over dt} \le -{1\over2}(1 - 12\varepsilon) (W^0_+ - 1)^2 + {1\over 2} + 20 \varepsilon.
\end{equation*}
Therefore it follows from the routine analysis of the differential inequality (see, for example \cite[Lemma 3.3]{Chen16JFA}) that if we choose initial datum satisfying
\begin{equation}\label{initial W}
W^0_+(0) < 1 - \sqrt{{1+40\varepsilon \over 1- 12\varepsilon}},
\end{equation}
then $W^0_+(t)$ tends to $-\infty$ in finite time.
To be more precise, let us pick the initial datum $v_0 \in H^1\cap C^1_0$ satisfying
\begin{equation*}
\|v_0\|_{H^1} < \varepsilon^4, \qquad \lim_{x \to 0^+} v_{0x}(x) = - 30 {\varepsilon},
\end{equation*}
with
\[
\varepsilon^4 + 30 {\varepsilon} < \delta.
\]
Then \eqref{initial W} is satisfied, and hence $v_x(t, 0) \to -\infty$ as $t \to T^*$ for some $T^* < \infty$. 
Hence the maximal existence time $T$ satisfies $T \leq T^* < \infty$.
\end{proof}

\vspace{0.25cm}

{\bf Acknowledgments:} The work of RMC is partially supported by National Science Foundation under grant DMS-1613375 and DMS-1907584.
The work of DEP is partially supported by the NSERC Discovery grant.

\bibliographystyle{siam}
\bibliography{Reference}

\end{document}